\newtheorem{thm}{Theorem}[section]
\newtheorem{cor}[thm]{Corollary}
\newtheorem{prop}[thm]{Proposition}
\theoremstyle{definition}
\newtheorem{defi}[thm]{Definition}
\newtheorem{example}[thm]{Example}
\newtheorem{rem}[thm]{Remark}
\numberwithin{equation}{section}
\newcommand{\op}{\mathrm{op}}
\def\C{\mathcal{C}}
\def\As{\operatorname{Ass}}
\def\W{\mathbf{W}}
\def\Cof{\mathbf{C}}
\def\F{\mathbf{F}}
\def\Int{\mathcal{I}}
\def\ho{\operatorname{ho}}
\def\Sset{\operatorname{SSet}}
\def\MCmod{\mathscr{MC}}
\def\Map{\operatorname{Map}}
\def\CDGA{\operatorname{CDGA}}
\def\DGLA{\operatorname{DGLA}}
\def\DGA{\operatorname{DGA}}
\def\DGAa{\operatorname{DGA/k}}
\def\pcVect{\operatorname{pcVect}}
\def\DGVect{\operatorname{DGVect}}
\def\pcDGVect{\operatorname{pcDGVect}}
\def\pcDGA{\operatorname{pcDGA}}
\def\pcCDGA{\operatorname{pcCDGA}}
\def\pcDGAloc{\operatorname{pcDGA_{loc}}}
\def\pcDGAlocop{\operatorname{pcDGA_{loc}^{\op}}}
\def\pcCDGAloc{\operatorname{pcCDGA_{loc}}}
\def\pcCDGAlocop{\operatorname{pcCDGA_{loc}^{op}}}
\newcommand{\Lie}{\mathsf{Lie}}
\newcommand{\sll}{\mathfrak{sl}}
\newcommand{\Vect}{\mathrm{Vect}}
\def\Bar{\operatorname{Bar}}
\def\Ab{\operatorname{Ab}}
\def\Cobar{\operatorname{Cobar}}
\def\Harr{\operatorname{Harr}}
\def\CE{\operatorname{CE}}
\DeclareMathOperator{\MC}{MC}
\DeclareMathOperator{\Def}{Def}
\DeclareMathOperator{\REnd}{REnd}
\DeclareMathOperator{\Hom}{Hom}
\DeclareMathOperator{\Ker}{Ker}
\def\id{\operatorname{id}}
\newcommand{\nc}{\newcommand}
\newcommand{\delete}[1]{}
\nc{\mlabel}[1]{\label{#1}}  
\nc{\mcite}[1]{\cite{#1}}  
\nc{\mref}[1]{\ref{#1}}  
\nc{\mbibitem}[1]{\bibitem{#1}} 
\nc{\mlabel}[1]{\label{#1}{\hfill \hspace{1cm}{\bf{{\ }\hfill(#1)}}}}
\nc{\mcite}[1]{\cite{#1}{{\bf{{\ }(#1)}}}}  
\nc{\mref}[1]{\ref{#1}{{\bf{{\ }(#1)}}}}  
\nc{\mbibitem}[1]{\bibitem[\bf #1]{#1}} 
\newcommand{\emptycomment}[1]{}
\newcommand{\emptycomment}[1]{}
\newcommand{\g}{\mathfrak g}
\newcommand{\h}{\mathfrak h}
\newcommand{\I}{\mathfrak I}
\begin{document}

\title[Review of deformation theory II]{Review of deformation theory II:\\ a homotopical approach}

\author{Ai Guan}
\address{Department of Mathematics and Statistics, Lancaster University, Lancaster LA1 4YF, UK}
\email{a.guan@lancaster.ac.uk}

\author{Andrey Lazarev}
\address{Department of Mathematics and Statistics, Lancaster University, Lancaster LA1 4YF, UK}
\email{a.lazarev@lancaster.ac.uk}

\author{Yunhe Sheng}
\address{Department of Mathematics, Jilin University, Changchun 130012, Jilin, China}
\email{shengyh@jlu.edu.cn}

\author{Rong Tang}
\address{Department of Mathematics, Jilin University, Changchun 130012, Jilin, China}
\email{tangrong16@mails.jlu.edu.cn}

\date{\today}

\begin{abstract}
We give a general treatment of deformation theory from the point of view of homotopical algebra following Hinich, Manetti and Pridham. In particular, we show that any deformation functor in characteristic zero is controlled by a certain differential graded Lie algebra defined up to homotopy, and also formulate a noncommutative analogue of this result valid in any characteristic.
\end{abstract}

\subjclass[2010]{13D03,13D10,17B56,17A30}

\keywords{model category, deformation, Maurer-Cartan element, associative algebra, Lie algebra}

\maketitle
\tableofcontents
\allowdisplaybreaks

\section{Introduction}\mlabel{sec:intr}

It has long been observed that any reasonable deformation theory is `controlled' by a differential graded (dg) Lie algebra, at least in characteristic zero. The goal of this paper is to explain how this slogan can be made into a rigorous theorem. The work \cite{Pridham} contains essentially the same result but our approach, while conceptually close to op.~cit., is more elementary, e.g.~it does not use simplicial techniques in any essential way. The work by Lurie \cite{Lurie} also contains a version of this result, formulated in the framework of infinity categories.

The approach to abstract deformation theory that we adopt rests on two fundamental results that are important and interesting in their own right: Koszul duality between dg Lie algebras and cocommutative dg conilpotent coalgebras as formulated by Hinich \cite{Hinich} (as well as the associative variant \cite{Positselski}) and an abstract version of Brown's representability theorem \cite{Heller, Jardine}. We will be using the language of Quillen closed model categories \cite{Hovey,Quillen}.

The standard approach to deformation theory is as follows. Suppose that $O$ is an object of a certain category that one wants to deform and which is defined, in some sense, over some ground field $k$. We will not attempt to axiomatize this situation but a good example to keep in mind is an associative algebra over $k$. Then, a deformation of $O$ over a finite dimensional (Artinian) local ring $K$ is an object $O_K$ defined, in the same vague sense, over $K$ and such that its `reduction' modulo the maximal ideal $I$ of $K$ is isomorphic to $O$. Two deformations $O_K$ and $O^\prime_K$ are \emph{equivalent} if $O_K$ and $O_K^\prime$ are isomorphic via an isomorphism that is the identity modulo $I$. Thus, we have a functor $\Def$ associating to a local Artinian ring $K$ the set of equivalence classes of deformations of $O$ over $K$. The fundamental problem of deformation theory is finding a `universal' ring $K_u$ and the corresponding universal deformation of $O$ over $K_u$, i.e.~an element in $\Def(K_u)$ so that any other deformation of $O$ over $K$
is induced by a unique map $K_u\to K$.

It has been understood for a long time that one can only expect the deformation functor to be \emph{pro-representable}, in other words
we might as well extend the category on which $\Def$ is defined to include projective limits of local Artinian $k$-algebras (we will call them local pseudocompact $k$-algebras). Furthermore, it makes sense to attempt to characterize functors on the category of local Artinian (or pseudocompact) rings that deformation functors satisfy in concrete examples and then investigate whether these characteristic properties ensure (pro)representability. Note that a representable functor preserves arbitrary limits; moreover under some mild conditions on the category of set-theoretical nature, any functor preserving limits is representable (the so-called Freyd's adjoint functor theorem, \cite{Freyd}). However, a deformation functor may not preserve limits; indeed infinitesimal automorphisms often present an obstruction to such a preservation, cf.~\cite[Remark 2.15]{Schlessinger} for an explanation of this point. On the other hand, we often have that it preserves arbitrary products and that the natural map of sets
\begin{equation}\label{fibreproduct}
\Def(B\times_A C)\to \Def(B)\times_{\Def(A)}\Def(C)
\end{equation}
is \emph{surjective} (if it is bijective this would imply that $\Def$ preserves arbitrary colimits). Additionally, it usually makes sense to impose the \emph{normalization condition}: $\Def(k)$ is a one-point set. Together with another mild condition on infinitesimal deformations, these imply that $\Def$ has a \emph{hull}, a certain weakening of the property of being representable, \cite[Theorem 2.11]{Schlessinger}.

In order to obtain a decisive general result, it is necessary to extend the category of local pseudocompact algebras to that of local \emph{differential graded} pseudocompact algebras. The advantage of the latter is that it has the structure of a \emph{closed model category} and, in particular one can form its \emph{homotopy category}. This closed model category was constructed in a seminal paper of Hinich \cite{Hinich} and an extended deformation functor was considered in \cite{Manetti, Merkulov}. The latter papers, however, did not make full use of the strength of the closed model structure on local pseudocompact dg algebras.

So, we now have a set-valued functor defined on the category of local pseudocompact dg algebras. It is a deformation functor if it is normalized, preserves arbitrary products, has an appropriate analogue of (\ref{fibreproduct}) and, crucially, is \emph{homotopy invariant}, so that it descends to a functor on the homotopy category of local pseudocompact dg algebras. In the commutative case and when $k$ has characteristic zero, we will show that, under these  conditions the functor is representable in the homotopy category and there is a certain dg Lie algebra, defined up to a quasi-isomorphism `controlling' it. In the associative case we will similarly show that, under these conditions the functor is representable in the homotopy category and there is a certain dg \emph{associative} algebra, defined up to a quasi-isomorphism `controlling' it; this will be valid in any characteristic.

This article is the second part of a review of deformation theory, although it can be read independently. The first part \cite{GLST} gives concrete formulas for dg Lie algebras controlling deformations of various algebraic structures: associative algebras, Lie algebras, pre-Lie algebras, Leibniz algebras, 3-Lie algebras and some of their $\infty$-variants. We refer to the first part for examples of the theory presented here.


\section{Closed model categories}

Closed model categories (with the adjective `closed' frequently omitted) were introduced in \cite{Quillen}, as an abstraction of the category of topological spaces or simplicial sets. However it quickly became clear that this notion has much wider applicability, in particular much of the classical homological algebra can be formulated in the language of closed model categories. We will see that deformation theory can likewise be profitably recast in this language. The survey \cite{DWSpalinski} covers most of our needs; for more in-depth treatment see \cite{Hirschhorn,Hovey}.
\begin{defi}
A category $\C$ is a \emph{model category} if it is supplied with three classes of morphisms: weak equivalences $\mathbf W$, fibrations $\F$ and cofibrations $\Cof$, each closed under compositions and containing identity maps. Morphisms in $\Cof \cap \W$ are called \emph{acyclic cofibrations} and morphisms in $\F\cap\W$ are similarly called \emph{acyclic fibrations}. The following axioms are required to hold.
\begin{enumerate}
\item[(MC1)]
$\C$ contains arbitrary limits and colimits;
\item[(MC2)]
If $f,g$ are morphisms in $\C$ for which $f\circ g$ is defined and out of three morphisms $f, g, f\circ g$ two are weak equivalences, then so is the third.
\item[(MC3)] The classes of morphisms $\W$, $\Cof$ and $\F$ are each closed under retracts.
\item[(MC4)] Given a diagram in $\C$ of the form
\begin{equation}\label{eq:LLPRLP}
\xymatrix{A\ar_i[d]\ar[r]&X\ar^p[d]\\
B\ar[r]\ar@{-->}[ur]&Y		
}
\end{equation}
the dotted arrow making the whole diagram commutative exists if either	
\begin{enumerate}
\item $i\in\Cof\cap \W$ and $p\in\F$ or
\item $i\in\Cof$ and $p\in \F\cap \W$.
\end{enumerate}
If the dotted arrow in (\ref{eq:LLPRLP}) exists, we say that $i$ has the Left Lifting Property (LLP) with respect
to $p$ and $p$ has the Right Lifting Property (RLP) with respect to $i$.
\item[(MC5)] Any morphism $f$ in $\C$ can functorially be factored in two ways:
\begin{enumerate}
\item as $f=p\circ i$ where $p\in\F$ and $i\in\Cof\cap \W$ or
\item as $f=p\circ i$ where $p\in\F\cap\W$ and $i\in\Cof$.
\end{enumerate}
\end{enumerate}
\end{defi}
\begin{rem}
The above definition differs from the original one by Quillen in that the latter only assumes the existence of finite limits and colimits and the factorizations of maps as in Axiom MC5 were not required to be functorial. However, in practice, the strengthened axioms hold in most of the cases of interest and this modification is often preferred in the current literature.
\end{rem}
Due to existence of limits and colimits, a model category $\C$ has an initial object $\varnothing$ and a terminal object $*$; if these are isomorphic, $\C$ is called a \emph{pointed} model category. An object $X$ of $\C$ is called \emph{fibrant} if the unique map $X\to *$ is a fibration and \emph{cofibrant} if the map $\varnothing \to X$ is a cofibration. By the factorization axiom MC5, for every object $X$ there is functorially associated with it a fibrant object $RX$ and an acyclic cofibration $X\to RX$; similarly there is a cofibrant object $LX$ and an acyclic fibration $LX\to X$. We will call $RX$ and $LX$ \emph{fibrant and cofibrant replacements} of $X$, respectively. Moreover, it is easy to see that any object $X\in\C$ can be connected by (possibly a zigzag of) weak equivalences to an object that is both fibrant and cofibrant; e.g.~such is the object $L(RX)$ or $R(LX)$.

\begin{example}\label{ex:closedmodel}
Here are a few examples of  model categories.
\begin{enumerate}
\item The category $\operatorname{Top}$ of topological spaces is a closed model category where weak equivalences are the ordinary weak equivalences of topological spaces, fibrations are Serre fibrations and cofibrations are those maps that have the LLP with respect to Serre fibrations. All objects are fibrant and the cofibrant objects are retracts of CW complexes. This is the prototypical model category that served as a blueprint and motivation for developing the whole theory of model categories.
\item The category $\Sset$ of simplicial sets, \cite{GoerssJardine}. The weak equivalences are those maps between simplicial sets $S\to K$ such that the induced map on their geometric realizations $|S|\to|K|$ is a weak equivalence of topological spaces. Cofibrations are injections of simplicial sets and fibrations are the maps having the RLP with respect to acyclic cofibrations.
\item The category $\operatorname{Ch}(R)$ of chain complexes over an associative ring $R$ has two natural model category structures with weak equivalences being quasi-isomorphisms of chain complexes. In the first one (called the \emph{projective model structure}) fibrations are surjective maps and cofibrations are chain maps having the LLP with respect to surjective chain maps, whereas in the second one (called the \emph{injective model structure}) cofibrations are injective maps and fibrations are chain maps having the RLP with respect to injective chain maps. Much of classical homological algebra can be formulated in terms of these model categories.
\item The categories $\CDGA$ and $\DGLA$ of commutative dg algebras and dg Lie algebras over a field of characteristic zero and $\DGA$ and $\DGAa$ of dg algebras and \emph{augmented} dg algebras over a field of arbitrary characteristic have model structures where weak equivalences are quasi-isomorphisms, fibrations are surjective maps and cofibrations are the maps having the LLP with respect to fibrations. All objects are fibrant in these model categories.
\end{enumerate}
\end{example}

Next, we will discuss the notion of \emph{homotopy} in model categories; these are based on \emph{path and cylinder objects}.

\begin{defi}
Let $X$ be an object in a model category $\C$.
\begin{enumerate}
\item A \emph{cylinder object} for $X$ is an object $X\otimes I$ together with a factorization
$X\coprod X\xrightarrow{i}X\otimes I\xrightarrow{p} X$
of the canonical folding map $X\coprod X\to X$ into a cofibration followed by an acyclic fibration.
Two maps $f,g:X\to Y$ in $\C$ are said to be \emph{left homotopic} if their sum $f\coprod g:X\coprod X\to Y$ extends to a map $X\otimes I\to Y$.
\item A \emph{path object} for $X$ is an object $X^I$ together with a factorization
$X\xrightarrow{i}X^I\xrightarrow{p} X\times X$
of the canonical diagonal map $X\to X\times X$ into an acyclic cofibration followed by a fibration.
Two maps $f,g:X\to Y$ in $\C$ are said to be \emph{right homotopic} if the product map $(f,g):X\to Y\times Y$ lifts to a map $X\to Y^I$.
\end{enumerate}
\end{defi}

\begin{rem}
Some authors prefer to weaken the notions of a cylinder and path object, for example, not insisting that the map $X\coprod X\to X\otimes I$ be a cofibration (note that in the case of topological spaces the standard topological cylinder $X\times[0,1]$ this condition is not satisfied unless $X$ is a CW complex). Nevertheless, the axiom MC5 ensures that any object has a functorial cylinder and path object.
\end{rem}
The following result holds.

\begin{thm}\label{th:homotopyclasses}
Let $X$ be a cofibrant object and $Y$ be a fibrant object of a model category $\C$. Then \begin{enumerate}
\item Two maps $X\to Y$ are left homotopic if and only if they are right homotopic.
\item The relation of left or right homotopy on $\Hom_{\C}(X,Y)$ is an equivalence relation. The set of (left or right) homotopy classes of maps $X\to Y$ will be denoted by $[X,Y]$.
\item If $X^\prime$ is a cofibrant object weakly equivalent to $X$ and $Y^\prime$ is a fibrant object weakly equivalent to $Y$ then there is a bijection $[X,Y]\cong [X^\prime, Y^\prime]$.
\item If $f,g:X\to Y$ are left homotopic and $h:A\to X$ is a map with $A$ cofibrant, then $h\circ f$ and $h\circ g$ are left homotopic. Similarly if $k:Y\to B$ is a map with $B$ fibrant then $f\circ k$ and $g\circ k$ are right homotopic.
\item Suppose additionally that $X,Y\in\C$ are both fibrant and cofibrant and that $f:X\to Y$ is a weak equivalence. Then
$X$ and $Y$ are \emph{homotopy equivalent}, i.e.~there exists a map $g:Y\to X$ such that $f\circ g$ is homotopic to $\id_Y$ and $g\circ f$ is homotopic to $\id_X$.
\end{enumerate}
\end{thm}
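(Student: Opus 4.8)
The plan is to build up the elementary homotopy theory of $\C$ through a sequence of lifting arguments in the spirit of \cite{DWSpalinski}, and then read off the five assertions in an appropriate logical order. Write $i_0,i_1\colon X\to X\otimes I$ and $p\colon X\otimes I\to X$ for the structure maps of a cylinder object, and $s\colon Y\to Y^I$, $p_0,p_1\colon Y^I\to Y$ for those of a path object. The preliminary fact underlying everything is that if $X$ is cofibrant then each $i_k$ is an acyclic cofibration: the inclusion $X\to X\coprod X$ is a pushout of the cofibration $\varnothing\to X$, hence a cofibration, so its composite with the cofibration $X\coprod X\to X\otimes I$ is a cofibration; since $p\circ i_k=\id_X$ and $p$ is a weak equivalence, axiom MC2 forces $i_k\in\W$. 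Dually, if $Y$ is fibrant then $p_0,p_1$ are acyclic fibrations.

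I would first treat left homotopy for $X$ cofibrant: reflexivity via the constant homotopy $f\circ p$, symmetry by precomposing with the swap automorphism of $X\coprod X$ (which yields another cylinder), and transitivity by gluing two cylinders along their common end, the object $X\otimes I\coprod_X X\otimes I'$ being again a cylinder precisely because $X$ is cofibrant; this gives Part 2, with the dual argument handling right homotopy for $Y$ fibrant. The heart of Part 1 is that left homotopy implies right homotopy: given a left homotopy $H\colon X\otimes I\to Y$ from $f$ to $g$ and a path object $Y^I$, solve the lifting problem whose left map is the acyclic cofibration $i_0$, whose right map is the fibration $(p_0,p_1)\colon Y^I\to Y\times Y$, whose top map is $s\circ f$ and whose bottom map is $(f\circ p,H)\colon X\otimes I\to Y\times Y$; restricting the lift along $i_1$ produces a right homotopy from $f$ to $g$. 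The converse is dual, using that $Y$ is fibrant. Part 4 is similar: composing a homotopy $H\colon X\otimes I\to Y$ with a map $Y\to Z$ preserves left homotopy with no hypotheses, while pre-composition with $h\colon A\to X$ for $A$ cofibrant is handled by first lifting $h$ to a map of cylinders $A\otimes I\to X\otimes I$ (solve the lifting problem whose left map is $A\coprod A\to A\otimes I$ and whose right map is the acyclic fibration $X\otimes I\to X$) and then composing with $H$.

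Parts 3 and 5 rest on a bijection lemma: if $X$ is cofibrant and $p\colon Y\to Z$ is an acyclic fibration, then $p_*\colon[X,Y]\to[X,Z]$ is a bijection, with surjectivity obtained by lifting $\varnothing\to X$ against $p$ and injectivity by lifting a homotopy $X\otimes I\to Z$ against $p$; this comes with an evident dual for acyclic cofibrations. For Part 5 I would factor the weak equivalence $f$ as $q\circ j$ with $j$ an acyclic cofibration and $q$ an acyclic fibration (MC5 together with MC2); the intermediate object is again fibrant and cofibrant, so it suffices to produce homotopy inverses for $j$ and $q$ separately. Each admits a one-sided inverse obtained by lifting against an identity (using fibrancy, respectively cofibrancy), and the bijection lemma upgrades this to a genuine two-sided homotopy inverse.

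Finally, for Part 3 I would change the source and the target one at a time and factor each weak equivalence, reducing matters to the bijection lemmas above. The step I expect to be the main obstacle is exactly this reduction when $X'$ and $Y'$ are linked to $X$ and $Y$ only by a general weak equivalence, a priori a zigzag: one must promote the bijection lemmas from acyclic (co)fibrations to arbitrary weak equivalences between fibrant (respectively cofibrant) objects, which forces the retraction argument of Part 5 and a careful audit of which intermediate objects are fibrant and which are cofibrant. The real work there is this hypothesis-tracking and bookkeeping rather than any single ingenious construction.
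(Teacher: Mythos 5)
Your proposal is correct and follows essentially the same route as the paper, whose entire proof is the citation ``See \cite[Section 4]{DWSpalinski}'': the lifting arguments you outline (the structure maps $i_0,i_1$ being acyclic cofibrations for cofibrant $X$, the cylinder-gluing for transitivity, the left-right comparison via a lift into $Y^I$, the bijection lemmas against acyclic (co)fibrations, and their promotion to general weak equivalences for Parts 3 and 5) are precisely the content of that section. The only point needing a one-line patch is transitivity: under the paper's strict definition of cylinder (cofibration followed by \emph{acyclic fibration}), the glued object $X\otimes I\coprod_X X\otimes I'$ has $X\coprod X\to X\otimes I\coprod_X X\otimes I'$ a cofibration but its projection to $X$ only a weak equivalence, so one must further factor that projection (MC5) and extend the homotopy along the resulting acyclic cofibration using that $Y$ is fibrant.
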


\begin{proof}
See \cite[Section 4]{DWSpalinski}.
\end{proof}	

This allows one to construct the homotopy category of a model category.

\begin{defi}
The \emph{homotopy category} of a model category $\C$ is the category $\ho\C$ whose objects are the objects in $\C$ that are both fibrant and cofibrant and for two fibrant-cofibrant objects $X, Y\in \C$ we have $\Hom_{\ho\C}(X,Y):=[X,Y]$, the homotopy classes of maps from $X$ to $Y$.	
\end{defi}
Theorem~\ref{th:homotopyclasses} ensures that $\ho\C$ is well-defined. Moreover, the correspondence $X\mapsto L(RX)$ (or, equivalently, $X\mapsto R(LX)$) determines a functor $\gamma:\C\to\ho\C$. It follows from Theorem~\ref{th:homotopyclasses} (5) that $\gamma$ takes weak equivalences in $\C$ into isomorphisms in $\ho\C$; it is remarkable that $\gamma$ is the \emph{universal functor} out of $\C$ having this property.

\begin{thm}
Let $F:\C\to\mathcal{D}$ be a functor from a model category $\C$ to a category $\mathcal{D}$ such that for any weak equivalence $f\in\C$ its image $F(f)\in\mathcal{D}$ is an isomorphism. Then there exists a unique functor $G:\ho\C\to\mathcal{D}$ such that $G\circ\gamma=F$.
\end{thm}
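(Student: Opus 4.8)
The plan is to construct $G$ by hand on objects and morphisms and then verify the two required properties, after isolating the single nontrivial input. The heart of the matter -- and the step I expect to be the main obstacle -- is the claim that \emph{$F$ identifies homotopic maps}: if $f,g\colon X\to Y$ are left homotopic, then $F(f)=F(g)$. To prove this I would fix a cylinder object $X\coprod X\xrightarrow{i}X\otimes I\xrightarrow{p}X$ and let $j_0,j_1\colon X\to X\otimes I$ be the two end inclusions, i.e. the composites of $i$ with the two summand inclusions of $X\coprod X$. Since $p$ is an acyclic fibration and $p\circ j_0=p\circ j_1=\id_X$, axiom MC2 forces each $j_k$ to be a weak equivalence; hence $F(j_0)$ and $F(j_1)$ are both equal to $F(p)^{-1}$, so in particular $F(j_0)=F(j_1)$. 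A left homotopy $H\colon X\otimes I\to Y$ satisfies $H\circ j_0=f$ and $H\circ j_1=g$, whence $F(f)=F(H)\circ F(j_0)=F(H)\circ F(j_1)=F(g)$. By Theorem~\ref{th:homotopyclasses}(1) the same holds for right homotopic maps between cofibrant--fibrant objects, so $F$ is constant on homotopy classes.

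With this lemma in hand, existence is essentially formal. The objects of $\ho\C$ are the cofibrant--fibrant objects of $\C$, so I would set $G(Z):=F(Z)$ on objects and $G([f]):=F(f)$ on a morphism represented by $f\colon Z\to W$ in $\C$. The lemma makes this independent of the chosen representative, $G$ preserves identities on the nose, and functoriality is immediate because composition in $\ho\C$ is induced by composition in $\C$, which is well defined on homotopy classes by Theorem~\ref{th:homotopyclasses}(4). To check $G\circ\gamma=F$ I would invoke the natural weak equivalences $X\to RX\leftarrow L(RX)=\gamma(X)$ supplied by the functorial factorizations of axiom MC5: applying $F$ and using that it inverts weak equivalences identifies $F(\gamma X)$ with $F(X)$, and on a morphism $f$ the same naturality together with the homotopy-invariance lemma yields $G(\gamma f)=F(L(Rf))=F(f)$.

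Finally, uniqueness follows from the fact that $\gamma$ is surjective on objects up to isomorphism and full up to these comparisons. Indeed every object $Z$ of $\ho\C$ is cofibrant--fibrant and is joined to $\gamma(Z)=L(RZ)$ by the canonical weak equivalences above, which $\gamma$ sends to isomorphisms of $\ho\C$; and every morphism of $\ho\C$ is the homotopy class of an honest map of $\C$. Any competing functor $G'$ with $G'\circ\gamma=F$ must therefore agree with $G$ on these comparison isomorphisms -- because they are built from $\gamma$-images of weak equivalences, on which $G'$ is forced to take the value $F$ assigns -- and on all representing maps, so $G'$ is determined on every object and every morphism. Hence $G'=G$, completing the argument.
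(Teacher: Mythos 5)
Your key lemma and construction are correct, and they are in substance the proof from the reference that the paper cites (the paper itself gives no argument beyond the citation to \cite{DWSpalinski}): showing that a functor inverting weak equivalences identifies left homotopic maps, via $F(p)\circ F(j_0)=\id_{F(X)}=F(p)\circ F(j_1)$ and hence $F(j_0)=F(p)^{-1}=F(j_1)$, is exactly the crucial step, and you carry it out correctly. (A minor remark: you do not even need MC2 there; invertibility of $F(p)$ alone forces $F(j_k)=F(p)^{-1}$.)

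The gap is in your last two steps, where what you actually prove is weaker than what the statement asserts. With the paper's definitions ($\ho\C$ has only the fibrant-cofibrant objects and $\gamma(X)=L(RX)$), your functor $G$ satisfies $G(\gamma(X))=F(L(RX))$, which is canonically \emph{isomorphic} to $F(X)$ but not equal to it; in particular your asserted identity $G(\gamma f)=F(L(Rf))=F(f)$ cannot hold literally, since $F(L(Rf))$ and $F(f)$ are morphisms between different objects of $\mathcal{D}$. So your argument establishes a natural isomorphism $G\circ\gamma\cong F$, not the equality $G\circ\gamma=F$. The uniqueness argument breaks at the same point: the comparison isomorphism $Z\cong\gamma(Z)$ in $\ho\C$ is the homotopy class of a lift $Z\to L(RZ)$ of $Z\to RZ$, and this class is \emph{not} the $\gamma$-image of any map of $\C$ (the $\gamma$-image of that lift is a morphism $L(RZ)\to L(R(L(RZ)))$), so the hypothesis $G'\circ\gamma=F$ does not determine the value of $G'$ on it; indeed one can modify $G'$ by conjugation at any object outside the image of $\gamma$ without changing $G'\circ\gamma$, so strict uniqueness genuinely fails in this formulation. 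These defects are really inherited from the paper's nonstandard definition of the pair $(\ho\C,\gamma)$ rather than from your core argument: in \cite{DWSpalinski} the homotopy category has the \emph{same} objects as $\C$, with $\Hom_{\ho\C}(X,Y):=[L(RX),L(RY)]$ and $\gamma$ the identity on objects. In that setting your lemma gives a well-defined functor $G$ with $G(X):=F(X)$ and $G([h]):=\alpha_Y^{-1}\circ F(h)\circ\alpha_X$, where $\alpha_X:=F\bigl(L(RX)\to RX\bigr)^{-1}\circ F\bigl(X\to RX\bigr)$; naturality of the factorizations then yields the strict equality $G\circ\gamma=F$, and uniqueness follows because every object and every morphism of $\ho\C$ is now generated by $\gamma$-images of maps and inverses of $\gamma$-images of weak equivalences. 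Rewriting your final two steps in that framework closes the gap.
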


\begin{proof}
See \cite[Theorem 4.2]{DWSpalinski}.
\end{proof}
\begin{rem}
The homotopy category of a  model category $\C$ is where the most important invariants of $\C$ lie. For example, the \emph{derived category} of a ring $R$ is the homotopy category of $\operatorname{Ch}(R)$, cf.~Example~\ref{ex:closedmodel} with either projective or injective model structure. Thus, different model structures on the same category may lead to equivalent homotopy categories.
\end{rem}
Having defined the notion of a model category, it is natural to consider functors between different model categories. It is unreasonable to require that functors preserve the whole structure available (i.e.~all classes $\W$, $\F$, $\Cof$) as this does not hold in many cases of interest. The appropriate notion here is that of a \emph{Quillen adjunction}.
\begin{defi}
Let $\C$ and $\mathcal D$ be model categories and $F:\C\rightleftarrows \mathcal{D}:G$ be an adjoint pair of functors so that $F$ is left adjoint to $G$.
We say that $(F,G)$ is a Quillen adjunction if $F$ takes cofibrations in $\C$ to cofibrations in $\mathcal {D}$ and $G$ takes fibrations in $\mathcal D$ to fibrations in $\C$. We will refer to $F$ as a \emph{left Quillen functor} and to $G$ as a \emph{right Quillen functor}.
\end{defi}
If $F:\C\rightleftarrows \mathcal{D}:G$ is a Quillen adjunction, then one can prove that $F$ carries weak equivalences between cofibrant object into weak equivalences and likewise $G$ carries weak equivalences between fibrant objects into weak equivalences. It follows that $F$ and $G$ lift to functors $LF$ and $RG$ between the corresponding homotopy categories $\ho\C$ and $\ho\mathcal D$.
We will refer to $LF$ as the \emph{left derived functor} of $F$ and to $RG$ as the \emph{right derived functor} of $G$.
Moreover, $(LF, RG)$ also form an adjoint pair:
\begin{thm}
A Quillen adjunction $F:\C\rightleftarrows\mathcal{D}:G$ determines an (ordinary) adjunction
\[
LF:\ho\C\rightleftarrows\ho\mathcal{D}:RG.
\]
\end{thm}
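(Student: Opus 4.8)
The plan is to reduce the whole statement to a single natural bijection on homotopy classes, and then to paste in the fibrant and cofibrant replacements using the invariance results of Theorem~\ref{th:homotopyclasses}. Recall that on objects the derived functors are represented by $LF(X)=R(FX)$ (note $FX$ is already cofibrant, since $F$ is left Quillen and $X$ is cofibrant) and $RG(Y)=L(GY)$ (with $GY$ already fibrant). For fibrant--cofibrant $X\in\C$ and $Y\in\mathcal D$ one then has, by definition of the morphism sets in the homotopy category,
\[
\Hom_{\ho\mathcal D}(LF(X),Y)=[R(FX),Y],\qquad \Hom_{\ho\C}(X,RG(Y))=[X,L(GY)].
\]

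The key lemma I would isolate is this: if $A\in\C$ is cofibrant and $W\in\mathcal D$ is fibrant, then the adjunction bijection $\Hom_{\mathcal D}(FA,W)\cong\Hom_{\C}(A,GW)$ descends to a bijection $[FA,W]\cong[A,GW]$. To prove it I would fix a cylinder object $A\coprod A\to A\otimes I\to A$ for $A$. Since $F$ is a left adjoint it preserves the coproduct, and since it is left Quillen it sends the cofibration $A\coprod A\to A\otimes I$ to a cofibration and the weak equivalence $A\otimes I\to A$ (a map between cofibrant objects) to a weak equivalence; hence $FA\coprod FA\to F(A\otimes I)\to FA$ exhibits $F(A\otimes I)$ as a (good) cylinder for $FA$, which suffices to detect left homotopy into the fibrant target $W$. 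A left homotopy $H\colon F(A\otimes I)\to W$ between $f,g\colon FA\to W$ corresponds under the adjunction to $\widehat H\colon A\otimes I\to GW$, and naturality of the adjunction applied to $A\coprod A\to A\otimes I$ shows $\widehat H$ restricts to $\widehat f\coprod\widehat g$; thus $\widehat H$ is a left homotopy between the adjoints. As the adjunction is a bijection on all maps, this correspondence is invertible, so $f\sim g$ iff $\widehat f\sim\widehat g$. Since $FA,A$ are cofibrant and $W,GW$ are fibrant, left homotopy is an equivalence relation coinciding with right homotopy by Theorem~\ref{th:homotopyclasses}(1),(2), giving the asserted bijection on homotopy classes; naturality in $A$ and $W$ is inherited from the underlying adjunction together with Theorem~\ref{th:homotopyclasses}(4).

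With the lemma in hand I would assemble the main isomorphism as a chain. First, $FX\to R(FX)$ is a weak equivalence between cofibrant objects and $Y$ is fibrant, so Theorem~\ref{th:homotopyclasses}(3) gives $[R(FX),Y]\cong[FX,Y]$. Applying the lemma with $A=X$ and $W=Y$ yields $[FX,Y]\cong[X,GY]$. Finally $L(GY)\to GY$ is a weak equivalence between fibrant objects and $X$ is cofibrant, so Theorem~\ref{th:homotopyclasses}(3) again gives $[X,GY]\cong[X,L(GY)]$. Composing,
\[
\Hom_{\ho\mathcal D}(LF(X),Y)\cong\Hom_{\ho\C}(X,RG(Y)).
\]
Each arrow in the chain is induced either by the natural adjunction bijection or by composition with the replacement weak equivalences, so the composite is natural in both $X$ and $Y$, which is exactly the claimed adjunction.

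I expect the main obstacle to be the key lemma, and specifically the verification that $F(A\otimes I)$ genuinely serves as a cylinder adequate for detecting the left-homotopy relation: the projection $F(A\otimes I)\to FA$ is only a weak equivalence rather than an acyclic fibration, so one must either invoke the standard fact that good cylinders suffice when the source is cofibrant and the target fibrant, or refactor it into the strict form. Once homotopy preservation under the adjunction is established, every remaining step is a formal manipulation with Theorem~\ref{th:homotopyclasses}.
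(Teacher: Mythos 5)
The paper offers no argument of its own for this statement: its ``proof'' is a citation to Dwyer--Spali\'nski, Theorem 9.7. Your proposal is correct, and it is essentially the standard argument behind that citation, so there is nothing to fault in the approach: the entire content is your key lemma, that for $A$ cofibrant and $W$ fibrant the adjunction bijection $\Hom_{\mathcal D}(FA,W)\cong\Hom_{\C}(A,GW)$ respects the homotopy relation, proved by applying $F$ to a cylinder for $A$ and transposing homotopies; the assembly via Theorem~\ref{th:homotopyclasses}(3) and the replacements $LF(X)=R(FX)$, $RG(Y)=L(GY)$ is then routine. The one obstacle you flag is real but standard, and is exactly where the cited proof also does its work: $F(A\otimes I)$ is only a ``good'' cylinder (cofibration followed by a weak equivalence, using that $F$ preserves coproducts, cofibrations, and weak equivalences between cofibrant objects --- the last being the Ken Brown statement recorded in the paper just before this theorem), not a cylinder in the paper's strict sense. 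To repair it, factor $F(A\otimes I)\to FA$ by MC5(a) as an acyclic cofibration $F(A\otimes I)\to C$ followed by a fibration $C\to FA$, which is acyclic by MC2; then $C$ is a strict cylinder for $FA$, and any homotopy $F(A\otimes I)\to W$ extends over the acyclic cofibration $F(A\otimes I)\to C$ by MC4(a) since $W$ is fibrant. Together with the dual statement (homotopies between maps from a cofibrant object to a fibrant object can be realized on any fixed good cylinder), which you correctly invoke for the converse direction, every step of your chain is justified.
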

\begin{proof} See \cite[Theorem 9.7]{DWSpalinski}.
\end{proof}
\begin{defi}
A Quillen adjunction $F:\C\rightleftarrows \mathcal{D}:G$ is called a \emph{Quillen equivalence} if the corresponding adjunction $ LF:\ho\C\rightleftarrows \ho\mathcal{D} : RG$ is an ordinary equivalence.
\end{defi}
\begin{example}\
\begin{itemize}
\item Let $R$ be an associative ring and $\C$ be the category of chain complexes of $R$ modules with its projective model structure, and $\mathcal D$ be the same category with the injective model structure, cf.~Example \ref{ex:closedmodel}(3). Then the identity functor $\C\to \mathcal D$ is a right Quillen functor establishing
a Quillen equivalence between $\C$ and $\mathcal D$. Its adjoint left Quillen functor $\mathcal D\to\C$ is, of course, also the identity functor. Informally, this can be interpreted as saying that there are two equivalent approaches to classical homological functors: one based on injective resolutions and the other based on projective resolutions.
\item
The functor of geometric realization from simplicial sets to topological spaces is a left Quillen functor whose right adjoint is the functor associating to a topological space its singular simplicial set \cite{Hovey}. This adjunction is a Quillen equivalence.

\item Later on we will consider Koszul duality as a Quillen equivalence between the categories of commutative pseudocompact dg algebras and dg Lie algebras and see that it underlies the modern approach to deformation theory.
\end{itemize}
\end{example}

In a  model category $\C$ one can define the notions of homotopy pullbacks and homotopy pushouts; an elementary construction can be found in \cite[Section 10]{DWSpalinski}.
\begin{defi}
Let $X,Y$ and $Z$ be objects in a closed model category $\C$ supplied with maps $X\to Y$ and $X\to Z$. Factor the map $LX\to X\to Y$ as $LX\xrightarrow{i_1}\tilde{Y}\xrightarrow{p_1} Y$ where $i_1$ is a cofibration and $p_1$ is an acyclic fibration; similarly factor the map $LX\to X\to Z$ as $LX\xrightarrow{i_2}\tilde{Z}\xrightarrow{p_2} Z$ where $i_2$ is a cofibration and $p_2$ is an acyclic fibration.
Then the \emph{homotopy pushout} $Y\coprod^h_XZ$ is by definition $ \tilde{Y}\coprod_{LX}\tilde{Z}$.

A \emph{homotopy pullback} is defined dually as a homotopy pushout in $\C^{op}$. It will be denoted for objects $X,Y$ and $Z$ by $Y\times^h_XZ$.
\end{defi}
\begin{rem}
The notions of a homotopy pullbacks and pushout are derived functors of ordinary pullbacks and pushouts. Namely, consider the category of diagrams $\operatorname{Push}(\C)$ in a model category $\C$ of the form $Y\leftarrow X\rightarrow Z$ and a functor $F:\operatorname{Push}(\C)\to\C$ obtained by taking the pushout of a given diagram. Then there exists a model structure on $\operatorname{Push}(\C)$ such that $F$ is a left Quillen functor and then the homotopy pushout is its left derived functor. The case of a homotopy pullback is similar.
\end{rem}

Homotopy pushouts and pullbacks are simplified in \emph{proper} model categories.
\begin{defi}
A model category $\C$ is called \emph{left proper} if for any pushout diagram in $\C$
\[\xymatrix{
A\ar^i[r]\ar_f[d]&B\ar^g[d]\\
C\ar[r]&D	
}\]
for which $i$ is a cofibration and $f$ is a weak equivalence, then the map $g$ is also a weak equivalence.
Dually, $\C$ is \emph{right proper} if for any pullback diagram in $\C$
\[\xymatrix{
A\ar[r]\ar_f[d]&B\ar^g[d]\\
C\ar_p[r]&D	
}\]
for which $p$ is a fibration and $g$ is a weak equivalence, then the map $f$ is also a weak equivalence.
\end{defi}
Many model categories are left or right proper as the following result makes clear.
\begin{prop}\label{prop:rightproper}
Let $\C$ be a model category such that every object of $\C$ is cofibrant. Then $\C$ is left proper. Dually, if every object of $\C$ is fibrant, then $\C$ is right proper.
\end{prop}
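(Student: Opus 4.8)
The plan is to prove the left-proper statement directly from the gluing/cube lemma that underlies left properness, using the hypothesis that all objects are cofibrant to upgrade an arbitrary pushout into one that computes the homotopy pushout. Concretely, suppose we are given a pushout square with $i:A\to B$ a cofibration and $f:A\to C$ a weak equivalence, producing $C\to D$ and $g:B\to D$. The goal is to show $g\in\W$. The right-proper assertion then follows by the dual argument in $\C^{\op}$, where cofibrations become fibrations, weak equivalences are preserved, and every object being fibrant in $\C$ means every object is cofibrant in $\C^{\op}$; so it suffices to treat the left-proper case.

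First I would recall the standard fact that a pushout of a cofibration along any map is again a cofibration (a retract argument from the lifting axiom MC4), so $g:B\to D$ is itself a cofibration. Next I would compare the given pushout $D=B\coprod_A C$ with the homotopy pushout $B\coprod_A^h C$. Since every object is cofibrant, $A$, $B$, $C$ are all cofibrant, and $i:A\to B$ is already a cofibration; in this situation the ordinary pushout of a cofibration between cofibrant objects already models the homotopy pushout, because the two legs need no cofibrant replacement and $i$ needs no factorization. The essential input is that pushing out along a cofibration is a left Quillen (homotopically well-behaved) operation: the functor sending a span to its pushout preserves weak equivalences when restricted to spans in which one leg is a cofibration and all objects are cofibrant. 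This is exactly the content of the gluing lemma for homotopy pushouts.

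The crux of the argument is therefore the gluing lemma: given a map of spans $(B\leftarrow A\rightarrow C)\to(B'\leftarrow A'\rightarrow C')$ in which the legs out of $A$ and $A'$ are cofibrations and all the component maps are weak equivalences, the induced map on pushouts is a weak equivalence. I would apply this to the morphism of spans $(B\leftarrow A\xrightarrow{f} C)\to(B\leftarrow B\xrightarrow{\id} B)$, where the vertical maps are $\id_B$, $i$, and $f$; here $i$ and $f$ are weak equivalences (note $i\in\W$ will follow once we observe $f\in\W$ forces the relevant maps, or more cleanly we apply the lemma to the span with second leg $i$ itself). The pushout of the top span is $D$, while the pushout of the bottom span is $B$, and the induced map $B\to D$ is precisely $g$; hence $g\in\W$.

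The main obstacle I anticipate is setting up the gluing lemma in a self-contained way, since the excerpt does not state it explicitly. There are two honest routes: either invoke the homotopy-pushout construction from the Dwyer--Spalinski reference and argue that under the cofibrancy hypothesis the canonical map $B\coprod_A^h C\to B\coprod_A C=D$ is a weak equivalence (so $D$ computes the homotopy pushout), and then use that homotopy pushouts are homotopy invariant; or prove gluing directly by a factorization-and-lifting argument. I would take the first route, phrasing the key step as: because $i$ is a cofibration and $A,B,C$ are cofibrant, no cofibrant replacement or factorization is needed in the homotopy-pushout recipe, so $D$ \emph{is} a homotopy pushout, and the homotopy pushout of $B\xleftarrow{i} A\xrightarrow{f} C$ along the weak equivalence $f$ is weakly equivalent to the homotopy pushout of $B\xleftarrow{i}A\xrightarrow{i}B$, namely $B$, with the comparison map being $g$. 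Verifying that this comparison map really is $g$ and that the invariance is legitimate is the delicate bookkeeping step, but it is routine once the homotopy-pushout identification is in place.
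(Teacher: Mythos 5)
Your overall strategy---reduce left properness to the gluing (cube) lemma for spans of cofibrant objects with one leg a cofibration, then dualize to get right properness---is the standard one, and it is sound in outline (the paper itself gives no argument, only a citation to \cite[Proposition A.2.4.2]{Lurie}). However, the crux step is wrong as written. The ``morphism of spans'' $(B\leftarrow A\xrightarrow{f} C)\to(B\xleftarrow{\id} B\xrightarrow{\id} B)$ with components $\id_B$, $i$, $f$ does not exist: $f$ maps $A$ to $C$, so it cannot serve as the component at $C$, and there is no map $C\to B$ available at all. Worse, you assert that ``$i$ and $f$ are weak equivalences''; $i$ is only a cofibration and is not in $\W$ in general, and no observation about $f$ will make it so. The correct application is to the morphism of spans $(B\xleftarrow{i} A\xrightarrow{\id} A)\to(B\xleftarrow{i} A\xrightarrow{f} C)$ with components $(\id_B,\id_A,f)$: all six objects are cofibrant by hypothesis, the leg $i$ is a cofibration in both spans, and all three components are weak equivalences, so the gluing lemma gives that the induced map on pushouts $B\cong B\coprod_A A\to B\coprod_A C=D$, which is precisely $g$, is a weak equivalence.

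There is a second, subtler gap: your justification of the gluing input is circular. You claim $D$ already computes the homotopy pushout ``because $i$ is a cofibration and $A,B,C$ are cofibrant, no cofibrant replacement or factorization is needed.'' That is not so: in the homotopy-pushout recipe (the paper's definition, or Dwyer--Spali\'nski's) the \emph{other} leg $f$ is not a cofibration and must still be factored as a cofibration $A\rightarrowtail\tilde{C}$ followed by an acyclic fibration $\tilde{C}\to C$, giving $B\coprod^h_AC\simeq B\coprod_A\tilde{C}$. The assertion that the resulting comparison map $B\coprod_A\tilde{C}\to B\coprod_AC=D$ (the pushout of the acyclic fibration $\tilde{C}\to C$ along a cofibration) is a weak equivalence is exactly the nontrivial content of the proposition, and it genuinely fails in model categories having non-cofibrant objects; this is the one point where the hypothesis must do real work, e.g.\ via the section trick (since $C$ is cofibrant the acyclic fibration $\tilde{C}\to C$ admits a section, from which one constructs a homotopy inverse to the pushed-out map). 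So you should either cite the cube lemma as a black box---\cite[Lemma 5.2.6]{Hovey}, or Proposition 13.1.2 of \cite{Hirschhorn}, both valid in an arbitrary model category---and apply it as corrected above, or actually supply the section-and-lifting argument; as it stands, your first route assumes what is to be proved and your second route is only named, not given.
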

\begin{proof}
See \cite[Proposition A.2.4.2]{Lurie}.
\end{proof}	
Then the following result holds.
\begin{prop}
Let $Y\leftarrow X\rightarrow Z$ be a diagram in a left proper model category where $X\to Y$ is a cofibration. Then
$Y\coprod_X Z$ is weakly equivalent to $Y\coprod^h_X Z$.

Dually, let $Y\rightarrow X\leftarrow Z$ be a diagram in a right proper model category where $Z\to X$ is a fibration.
Then $Y\times_XZ$ is weakly equivalent to $Y\times^h_XZ$.
\end{prop}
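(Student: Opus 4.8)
The plan is to produce an explicit weak equivalence from the homotopy pushout $\tilde Y\coprod_{LX}\tilde Z$ to the ordinary pushout $Y\coprod_X Z$, built as a composite of three maps, each of which is recognized as a weak equivalence by a single application of left properness (that is, as the cobase change of a weak equivalence along a cofibration). Write $q\colon LX\to X$ for the chosen acyclic fibration, and let $j\colon X\to Y$ (a cofibration, by hypothesis) and $m\colon X\to Z$ denote the two legs of the given span, so that by construction of the homotopy pushout one has $p_1 i_1=jq$ and $p_2 i_2=mq$. The essential asymmetry to overcome is that in the homotopy pushout both legs $i_1,i_2$ out of $LX$ are cofibrations, whereas in the span computing $Y\coprod_X Z$ only $j$ is assumed to be one; the leg $m$ need not be a cofibration, and this is precisely what obstructs a direct term-by-term comparison.

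To remove the obstruction I would first factor $m$, using MC5, as $X\xrightarrow{n}\hat Z\xrightarrow{\pi}Z$ with $n$ a cofibration and $\pi$ an acyclic fibration, so that the span $Y\xleftarrow{j}X\xrightarrow{n}\hat Z$ now has both legs cofibrations. Since $\pi$ is an acyclic fibration and $i_2$ is a cofibration, the square with sides $nq$ and $p_2$ commutes and admits, by MC4, a lift $\ell\colon\tilde Z\to\hat Z$ with $\ell i_2=nq$ and $\pi\ell=p_2$; then $\ell$ is a weak equivalence by MC2.

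The comparison is then the composite of three maps. First, the map $\tilde Y\coprod_{LX}\tilde Z\to\tilde Y\coprod_{LX}\hat Z$ induced by $\ell$: by the pasting law for pushouts it is the cobase change of the weak equivalence $\ell$ along the cofibration $\tilde Z\to\tilde Y\coprod_{LX}\tilde Z$ (itself a cobase change of $i_1$), hence a weak equivalence by left properness. Second, forming the auxiliary pushout $V:=\tilde Y\coprod_{LX}X$, left properness gives that $\tilde Y\to V$ is a weak equivalence, so the induced map $v\colon V\to Y$ (which satisfies $v|_{\tilde Y}=p_1$) is a weak equivalence by MC2; the pasting law identifies $\tilde Y\coprod_{LX}\hat Z$ with $V\coprod_X\hat Z$, and the map $V\coprod_X\hat Z\to Y\coprod_X\hat Z$ is the cobase change of $v$ along the cofibration $V\to V\coprod_X\hat Z$ (a cobase change of $n$), hence again a weak equivalence. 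Third, the map $Y\coprod_X\hat Z\to Y\coprod_X Z$ is the cobase change of $\pi$ along the cofibration $\hat Z\to Y\coprod_X\hat Z$ (a cobase change of $j$), so it too is a weak equivalence. Composing the three yields the required weak equivalence $\tilde Y\coprod_{LX}\tilde Z\xrightarrow{\ \simeq\ }Y\coprod_X Z$.

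I expect the only real difficulty to be bookkeeping rather than anything conceptual: each move must be arranged so that the weak equivalence being transported is pushed out along a \emph{genuine} cofibration, which is exactly why the factorization of $m$ through $\hat Z$ and the auxiliary object $V$ (which simultaneously absorbs the base change $LX\to X$ and the leg change $\tilde Y\to Y$) are introduced. The statement for homotopy pullbacks then follows formally by dualization: a homotopy pullback in $\C$ is a homotopy pushout in $\C^{\op}$, the fibration $Z\to X$ becomes a cofibration there, and right properness of $\C$ is left properness of $\C^{\op}$, so the argument above applies verbatim in the opposite category.
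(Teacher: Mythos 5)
Your proof is correct, and it is worth noting that the paper itself contains no argument for this proposition: it simply cites Lurie (Proposition A.2.4.4 of \emph{Higher Topos Theory}). Your three-step cobase-change argument is therefore a genuinely self-contained alternative, and each step checks out against the paper's definitions: the lift $\ell\colon\tilde Z\to\hat Z$ exists by MC4(b) (cofibration $i_2$ against the acyclic fibration $\pi$) and is a weak equivalence by MC2; each of your three comparison maps is, via the pasting law for pushouts, exactly the cobase change of a weak equivalence ($\ell$, then $v$, then $\pi$) along a map that is itself a cobase change of a genuine cofibration ($i_1$, then $n$, then $j$), so left properness as defined in the paper applies verbatim, with no hidden cofibrancy hypotheses needed; and the dual statement does follow formally since the opposite of a model category is a model category with fibrations and cofibrations interchanged, so right properness of $\C$ is left properness of $\C^{\op}$. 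What your route buys is that it works directly with the paper's somewhat rigid definition of $Y\coprod^h_XZ$ (fixed factorizations through $\tilde Y$, $\tilde Z$ over the cofibrant replacement $LX$), producing a single direct weak equivalence $\tilde Y\coprod_{LX}\tilde Z\to Y\coprod_X Z$ rather than a zigzag, and using nothing beyond MC2, MC4, MC5, the pasting law and properness; what the citation to Lurie buys the authors is brevity and a statement already formulated in the generality of diagram categories. The one presentational improvement I would suggest is to spell out the pasting-law identifications (e.g.\ that the functorially induced map $\tilde Y\coprod_{LX}\tilde Z\to\tilde Y\coprod_{LX}\hat Z$ agrees with the cobase change of $\ell$ along $\tilde Z\to\tilde Y\coprod_{LX}\tilde Z$, both being determined by the universal property), since that is where a reader is most likely to want the details.
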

\begin{proof}
See \cite[Proposition A.2.4.4]{Lurie}.
\end{proof}	
Lastly, we discuss the existence of derived mapping spaces in model categories.
\begin{thm}Let $X$ be a cofibrant object and $Y$ be a fibrant object in a model category $\C$.
\begin{enumerate}
\item For any object $A$ there exists a simplicial set $\Map_l(A,Y)$, such that $\pi_0\Map_l(A,Y)\cong[A,Y]_l$ a simplicial set $\Map_r(X,A)$ such that $\pi_0\Map_r(X,A)\cong[X,A]_r$. \item The functors $A\mapsto\Map_l(A,Y)$ and $A\mapsto\Map_r(X,A)$ are left and right Quillen functors from $\C$ to simplicial sets respectively.

\item There is a natural isomorphism $\Map_l(X,Y)\cong\Map_r(X,Y)$.
\end{enumerate}
\end{thm}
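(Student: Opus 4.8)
The plan is to build the mapping spaces from \emph{cosimplicial} and \emph{simplicial resolutions} (frames), following Dwyer--Kan and Hovey. Recall that a cosimplicial resolution of an object $A$ is a Reedy-cofibrant cosimplicial object $A^\bullet$ in $\C$ whose comparison map $A^\bullet\to cA$ to the constant cosimplicial object is a Reedy weak equivalence; in particular the degree-zero part $A^0$ is a cofibrant replacement of $A$ and every structure map $A^m\to A^n$ is a weak equivalence between cofibrant objects. Dually, a simplicial resolution of $A$ is a Reedy-fibrant simplicial object $A_\bullet$ with $cA\to A_\bullet$ a Reedy weak equivalence, each $A_n$ fibrant. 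Their existence and functoriality follow by applying the functorial factorization MC5 in the Reedy model structures on $\C^{\Delta}$ and $\C^{\Delta^{\op}}$, constructing the objects by induction on skeleta through the latching, resp.\ matching, maps. Fixing such functorial resolutions, I would \emph{define}
\[
\Map_l(A,Y):=\Hom_\C(A^\bullet,Y),\qquad \Map_r(X,A):=\Hom_\C(X,A_\bullet),
\]
the simplicial sets with $n$-simplices $\Hom_\C(A^n,Y)$ and $\Hom_\C(X,A_n)$ respectively.

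For part (1), the $0$-simplices of $\Map_l(A,Y)$ are the maps $A^0\to Y$ out of a cofibrant replacement of $A$, while the degree-one part $A^0\amalg A^0\to A^1\to A^0$ of the resolution (a cofibration followed by a weak equivalence, by Reedy cofibrancy and the resolution condition) is precisely a cylinder object for $A^0$. Hence a $1$-simplex of $\Map_l(A,Y)$ is exactly a left homotopy between its two faces, so $\pi_0\Map_l(A,Y)$ is the set of left-homotopy classes $[A,Y]_l$. The dual argument, using that $A_1$ is a path object, gives $\pi_0\Map_r(X,A)\cong[X,A]_r$; when $A$ is cofibrant, resp.\ $X$ is as given, these recover the homotopy classes of Theorem~\ref{th:homotopyclasses}.

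For part (2), the essential input is the two-variable (pushout--product) behaviour of $\Hom_\C$ against the resolutions. Given a cofibration $A\to B$ in $\C$, functoriality of the framing makes $A^\bullet\to B^\bullet$ a Reedy cofibration, and the SM7-type argument for the adjunction relating $\C^{\Delta}$, $\C$ and $\Sset$ shows $\Hom_\C(B^\bullet,Y)\to\Hom_\C(A^\bullet,Y)$ is a Kan fibration, acyclic whenever $A\to B$ is. Dually, $\Hom_\C(X,-)$ applied to the Reedy-fibrant simplicial frames carries fibrations to Kan fibrations and acyclic fibrations to acyclic ones. These are the Quillen-functoriality statements of (2); by Ken Brown's lemma they imply in particular that $\Hom_\C(-,Y_\bullet)$ and $\Hom_\C(X^\bullet,-)$ send weak equivalences between cofibrant, resp.\ fibrant, objects to weak equivalences of simplicial sets, a fact used below.

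The main work is part (3), which I read as a natural weak equivalence (isomorphism in $\ho\Sset$). I would form the bisimplicial set $B_{m,n}:=\Hom_\C(X^m,Y_n)$ and compare both mapping spaces with its diagonal $\operatorname{diag}B$. For fixed $m$ the codegeneracy $X^m\to X^0=X$ is a weak equivalence between cofibrant objects, so by the remark closing the previous paragraph the map $\Hom_\C(X^0,Y_\bullet)\to\Hom_\C(X^m,Y_\bullet)$ is a weak equivalence; thus $B$ is homotopically constant in the cosimplicial direction, and the Bousfield--Kan realization (``Fubini'') lemma gives $\Map_r(X,Y)=B_{0,\bullet}\xrightarrow{\simeq}\operatorname{diag}B$. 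Symmetrically, each $Y_n\to Y_0=Y$ is a weak equivalence between fibrant objects, $B$ is homotopically constant in the simplicial direction, and $\Map_l(X,Y)=B_{\bullet,0}\xrightarrow{\simeq}\operatorname{diag}B$. Composing yields the asserted natural isomorphism. I expect the genuine effort to lie in the inductive construction and functoriality of the Reedy resolutions (including verifying that the degree-one part really is a cylinder/path object) and in the realization lemma for bisimplicial sets; these are where axioms MC4 and MC5 do the heavy lifting, while the remaining steps are formal.
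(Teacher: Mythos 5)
Your overall architecture is the right one and is, in fact, the architecture behind the paper's proof: the paper does not prove this theorem itself but cites Section 5.4 of Hovey's book, whose treatment is exactly what you describe --- cosimplicial/simplicial resolutions (framings), an SM7-type lifting theorem, and the comparison of $\Map_l$ and $\Map_r$ through the diagonal of the bisimplicial set $\Hom_\C(X^\bullet,Y_\bullet)$ via the realization lemma. Parts (1) and (3) of your sketch follow that template correctly.

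There is, however, a genuine gap in your part (2). You assert that ``functoriality of the framing makes $A^\bullet\to B^\bullet$ a Reedy cofibration'' whenever $A\to B$ is a cofibration (and, dually, that the functorial simplicial resolution carries fibrations to Reedy fibrations). This does not follow from functoriality and need not hold: a functorial cosimplicial resolution is a functorial Reedy cofibrant replacement of the constant cosimplicial object, and functorial (co)fibrant replacements produced by MC5 factorizations need not preserve (co)fibrations --- nothing in the construction forces the relative latching maps $A^n\sqcup_{L_nA^\bullet}L_nB^\bullet\to B^n$ to be cofibrations. The standard proof (Hovey's, and Hirschhorn's) avoids this entirely by putting the resolution in the \emph{non-varying} slot: for a fixed cosimplicial resolution $X^\bullet$ the functor $\Hom_\C(X^\bullet,-):\C\to\Sset$ is an honest right Quillen functor, with left adjoint $K\mapsto X^\bullet\otimes K$; dually, for a fixed simplicial resolution $Y_\bullet$ the functor $\Hom_\C(-,Y_\bullet):\C^{\op}\to\Sset$ is an honest right Quillen functor, with left adjoint $K\mapsto Y_\bullet^K$. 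Both Quillen properties follow by pure adjointness from Reedy (co)fibrancy (e.g.\ $Y_\bullet^{\Delta^n}\to Y_\bullet^{\Lambda^n_k}$ is an acyclic fibration in $\C$, so any cofibration lifts against it), and no resolution of maps is ever needed. Note that the literal pairing you chose (resolution in the varying slot) cannot even be a Quillen functor in the strict sense, since the resolution functor admits no adjoint.

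This matters downstream: your part (3) consumes precisely the statement that $\Hom_\C(-,Y_\bullet)$ sends the weak equivalence $X^m\to X^0$ between cofibrant objects to a weak equivalence of simplicial sets, which is Ken Brown's lemma applied to the right Quillen functor $\Hom_\C(-,Y_\bullet)$ --- a functor whose Quillen property your part (2) never establishes (your (2) concerns $\Hom_\C(-^\bullet,Y)$ and $\Hom_\C(X,-_\bullet)$ instead). The repair is purely organizational: prove the two clean Quillen statements above, deduce the weak-equivalence preservation by Ken Brown, and then your bisimplicial diagonal argument for (3), as well as the $\pi_0$ computation in (1), go through unchanged and coincide with the cited proof.
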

\begin{proof}
See \cite[Section 5.4]{Hovey}.	
\end{proof}
When $X$ is cofibrant and $Y$ is fibrant, we will write $\Map(X,Y)$ for either $\Map_l(X,Y)$ or $\Map_r(X,Y)$ and call it the \emph{derived mapping space} from $X$ to $Y$.		

\section{Brown representability theorem for compactly generated model categories}

The Brown representability theorem \cite{Brown} is a necessary and sufficient condition for a functor defined on the homotopy category of pointed topological spaces to be representable. It has subsequently been formulated in various abstract contexts. It will be convenient for us to use a version due to Jardine, \cite{Jardine}.
\begin{defi}
Let $\C$ be a closed model category. We say that $\C$ is \emph{compactly generated} if there exists a set $S$ of compact
cofibrant objects in $\C$ that \emph{detect weak equivalences}, i.e.~a map $X\to Y$ in $\C$ is a weak equivalence if and only if for any $K\in S$ there is a bijection $[K,X]\to [K,Y]$.
\end{defi}
\begin{example}
The category of connected pointed topological spaces is compactly generated with $S:=S^n, n=1,2,\ldots$, the pointed spheres. It is interesting to note that the category of all (i.e.~not necessarily connected) topological spaces is not compactly generated, \cite{Heller}.
\end{example}
\begin{rem}
There is another, inequivalent notion of a compactly generated closed model category contained in e.g.~\cite{MayPonto}. Under this notion the category of all topological spaces \emph{is} compactly generated.
\end{rem}
Under the assumption of compact generation, an abstract Brown representability holds in $\C$.

\begin{thm}\label{thm:brownrep}
Let $\C$ be a compactly generated pointed closed model category with $*$ denoting its initial-terminal object. Suppose that a set valued contravariant functor $F$ on $\C$
satisfies the following conditions:
\begin{enumerate}
\item $F(*)=*$,
\item $F$ takes weak equivalences to bijections of sets,
\item $F$ takes arbitrary coproducts of cofibrant objects in $\C$ into products of sets.
\item Let $A$, $B$, $C$ be cofibrant objects in $\C$ and $A\to B$, $A\to C$ be morphisms in $\C$ with $A\to B$ being a cofibration.
Then the natural map $F(B\coprod_AC)\to F(B)\times_{F(A)}F(C)$ is a surjection of sets.
\end{enumerate}
Then the functor $F$ is representable in the homotopy category of $\C$, i.e.~there exists an object $X$ in $\C$ and a natural weak equivalence $F(Y)\simeq [Y,X]$ for any $Y\in\C$.
\end{thm}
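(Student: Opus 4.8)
The plan is to represent $F$ by exhibiting a fibrant--cofibrant object $X$ together with a \emph{universal element} $u\in F(X)$ for which the natural transformation $\Phi_Y\colon [Y,X]\to F(Y)$, $[f]\mapsto F(f)(u)$, is a bijection. By condition (2) and Theorem~\ref{th:homotopyclasses} this assignment is well defined on homotopy classes, so it suffices to produce a pair $(X,u)$ making every $\Phi_Y$ bijective. I would first arrange that $\Phi_K$ is a bijection for each generator $K\in S$, and then bootstrap to arbitrary $Y$ using the fact that $S$ detects weak equivalences.

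First I would construct a sequence of cofibrant objects $*=X_{-1}\hookrightarrow X_0\hookrightarrow X_1\hookrightarrow\cdots$ with each map a cofibration, together with compatible elements $u_n\in F(X_n)$, meaning $u_n$ is the restriction of $u_{n+1}$. For the base, set $X_0:=\coprod_{K\in S}\coprod_{a\in F(K)}K$; condition (3) identifies $F(X_0)$ with $\prod_{K,a}F(K)$, and I take $u_0$ to be the element whose $(K,a)$-component is $a$, so that the summand inclusion $K\hookrightarrow X_0$ pulls $u_0$ back to $a$. This makes $[K,X_0]\to F(K)$, $[f]\mapsto F(f)(u_0)$, surjective for every $K\in S$. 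For the inductive step, given $(X_n,u_n)$ I glue in a cylinder for every pair of maps $f,g\colon K\to X_n$ with $K\in S$ and $F(f)(u_n)=F(g)(u_n)$: form the pushout $X_{n+1}$ of the span $X_n\xleftarrow{f\coprod g}K\coprod K\hookrightarrow K\otimes I$, taken simultaneously over all such pairs. To extend $u_n$ to $u_{n+1}$, note that the cylinder projection $K\otimes I\to K$ is a weak equivalence, so by condition (2) there is a unique $v\in F(K\otimes I)$ restricting to the common value $F(f)(u_n)=F(g)(u_n)$ at both ends; hence $u_n$ and the $v$'s agree on $K\coprod K$, and condition (4) lifts this matching pair to $u_{n+1}\in F(X_{n+1})$ restricting to $u_n$. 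By construction $f$ and $g$ become left homotopic in $X_{n+1}$.

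I then set $X:=\colim_n X_n$, realized as a mapping telescope; since this is a transfinite composition of cofibrations it can be presented as a single pushout of a coproduct of cylinder inclusions, so conditions (3) and (4) produce $u\in F(X)$ restricting to every $u_n$. Because each $K\in S$ is compact, $[K,X]=\colim_n[K,X_n]$, and I claim $\Phi_K\colon[K,X]\to F(K)$ is bijective: surjectivity is inherited from stage $0$, while for injectivity any $f,g\colon K\to X$ with $F(f)(u)=F(g)(u)$ factor through some $X_n$ with $F(f)(u_n)=F(g)(u_n)$, so the cylinder glued at stage $n$ makes them homotopic in $X_{n+1}$, hence in $X$.

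It remains to upgrade bijectivity from $S$ to all cofibrant $Y$, and here the detection property is the crucial tool. Given a cofibrant $W$ and $v\in F(W)$, I would form $X\coprod W$, carrying the element $(u,v)$ via condition (3), and run the construction above starting from $X\coprod W$ to obtain an object $X'$ with $[K,X']\cong F(K)$ compatibly for all $K\in S$; then the inclusion $X\hookrightarrow X'$ induces bijections on $[K,-]$ for every $K\in S$, so it is a weak equivalence and hence invertible in $\ho\C$, and the composite $W\to X\coprod W\to X'\simeq X$ provides a class mapping to $v$. An entirely analogous argument, gluing a cylinder on $W$ along a pair $f,g\colon W\to X$, yields injectivity. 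Finally I replace $X$ by a weakly equivalent fibrant--cofibrant object. I expect the main obstacle to be the inductive extension of the universal element across the pushouts---making conditions (3) and (4) interact correctly with the cylinder computation coming from (2), and then assembling these into a single element over the telescope---together with the bookkeeping needed to deduce from the detection property that the comparison maps $X\hookrightarrow X'$ are weak equivalences.
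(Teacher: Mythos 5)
Your strategy is the classical Brown argument --- build a universal pair $(X,u)$ by iterated cylinder attachments, use compactness to get bijectivity of $\Phi_K$ on the generators, then bootstrap to arbitrary cofibrant objects via the detection property --- and this is essentially the argument behind Jardine's Theorem 19, which the paper itself does not reprove but simply cites. The bookkeeping you single out as the main obstacle is in fact fine: conditions (2)--(4) do exactly what you want in the inductive step, and the telescope presentation of $\colim_n X_n$ as a pushout of a coproduct of cylinder inclusions along $\coprod_n(X_n\coprod X_n)$ is legitimate, provided you add the (standard, but not free in a general model category) justification that the telescope maps to $\colim_n X_n$ by a weak equivalence --- this needs the gluing lemma for pushouts along cofibrations of cofibrant objects --- after which condition (2) transfers the universal element from the telescope to the colimit.

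The genuine gap is elsewhere: throughout, you conflate elements of $[K,Z]$ --- morphisms in the homotopy category, which is what compactness and the detection property refer to --- with honest morphisms $K\to Z$ in $\C$, which is what your pushout constructions require. For cofibrant $K$ these agree only when $Z$ is fibrant. This breaks the argument in two places. First, in the injectivity step for $\Phi_K$: compactness tells you that two classes in $[K,X]$ with the same image under $\Phi_K$ come from classes in $[K,X_n]$, but your stage-$n$ gluing attached cylinders only along pairs of \emph{actual} maps $K\to X_n$; if $X_n$ is not fibrant, a class in $[K,X_n]$ need not be represented by any map $K\to X_n$, and then no cylinder ever catches it. Second, the final step ``gluing a cylinder on $W$ along a pair $f,g\colon W\to X$'' presupposes honest representatives of the given classes in $[W,X]$, which again requires $X$ fibrant. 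This is not pedantry for the present paper: the theorem is applied to $\pcCDGAlocop$ and $\pcDGAlocop$, where \emph{every} object is cofibrant but only rather special objects are fibrant, so homotopy classes genuinely fail to be represented by morphisms. The repair is standard and should be made explicit: interleave fibrant replacements into the tower, i.e.\ alternate the cylinder-gluing pushouts with acyclic cofibrations $X_n\to RX_n$ (condition (2) extends the universal element uniquely backwards across these), so that cofinally many stages are fibrant and every class in $\colim_n[K,X_n]$ is represented by a map into some fibrant stage; likewise perform the $W$-gluings on a fibrant replacement of $X$ (respectively of $X\coprod W$) rather than on $X$ itself. With these corrections your argument closes up and recovers the cited result.
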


\begin{proof}
This is \cite[Theorem 19]{Jardine}.
\end{proof}

\begin{rem} Theorem \ref{thm:brownrep} is a model category version of the famous Brown representability theorem \cite{Brown} that was originally formulated in the category of pointed CW complexes. It is not the most general form of Brown's representability theorem (for such a statement see \cite{Heller}) since it can be formulated in a way not requiring the existence of a closed model structure. In practice (and particularly for the application we have in mind) a  model structure is often present and the conditions of the theorem are usually not difficult to verify.
\end{rem}
\begin{rem}
It is easy to see that, conversely, a representable up to homotopy set-valued functor on a compactly generated model category must satisfy the conditions listed in Theorem \ref{thm:brownrep}. To make a comparison with topology easier, we will view $F$ as a \emph{covariant} functor on $\C^{\op}$ represented by $X\in \C^{\op}$; we will assume without loss of generality that $X$ is cofibrant. Thus, for $Y\in \C^{\op}$ we have $F(Y)=[X,Y]$. The conditions (1), (2) and (3) are obvious. Applying $\Map(X,-)$ to a homotopy pullback of $B\to A\leftarrow C$ in $\C$, we obtain a homotopy pullback of simplicial sets (since $\Map(X, -)$ is a right Quillen functor).
\[
\xymatrix{
\Map(X,B\times^h_A C)\ar[r]\ar[d]&\Map(X,B)\ar[d]\\
\Map(X,C)\ar[r]&\Map(X,A)	
}
\]
Taking the connected components functor, we obtain a surjection \[F(B\times^h_A C)\cong\pi_0\Map(X,B\times^h_A C)\to
\pi_0\Map(X,B)\times_{\pi_0\Map(X,A)}\pi_0\Map(X,B)\cong F(B)\times_{F(A)}F(C)\]
as required.

Note also that this argument shows that one should not, in general, expect that the map	$F(B\times^h_A C)\to F(B)\times_{F(A)}F(B)$ is an isomorphism. Indeed, it follows from the homotopy pullback diagram above that the homotopy fibre
of the map \[
\Map(X,B\times^h_AC)\to\Map(X,B) \times\Map(X,C)
\] over a given point $(f,g)\in \Map(X,B) \times\Map(X,C)$ having the same image in $[X,A]$ is the based loop space $\Omega\Map(X,A)$. Thus, the fibration \[\Omega\Map(X,a)\to\Map(X,B\times^h_AC)\to\Map(X,B) \times\Map(X,C)\] gives rise to a long homotopy exact sequence (the Mayer-Vietoris sequence, \cite{Droitberg}).
\[\ldots\to\pi_1\Map(X,B)\times \pi_1\Map(X,C)\to\pi_1\Map(X,A)\to F(B\times^h_A C)\to F(B)\times_{F(A)}F(B).
\]
\end{rem}

\section{MC elements and MC moduli sets}

We will outline here the general theory of Maurer-Cartan (MC) elements in dg Lie and associative algebras and related moduli sets.
\subsection{MC moduli in dg Lie algebras}
\begin{defi}Let $\g$ be a dg Lie algebra over a field $k$ of characteristic zero. An element $x\in \g^1$ is called an \emph{MC element} if it satisfies the following
equation (called the MC or master equation)
\[
d(x)+\frac{1}{2}[x,x]=0.
\]
The set of MC elements in $\g$ will be denoted by $\MC(\g)$. If $A$ is a commutative dg algebra then $\g\otimes A$ has
naturally the structure of a dg Lie algebra and we will write $\MC(\g,A)$ for $\MC(\g\otimes A)$.

\end{defi}

From now on we shall assume that $\g$ is nilpotent or, more generally, pro-nilpotent (i.e. $\g\cong\varprojlim_n\g/\g^{[n]}$ where $\g^{[n]}$ is the dg Lie ideal generated by Lie products of at least $n$ elements). In this case it has a group $G$ associated to it.
To define $G$, recall that $U\g$, the universal enveloping algebra of $\g$ is the graded associative algebra
obtained by quotienting out the tensor algebra $T\g$ by the ideal generated by the relations $a\otimes b-(-1)^{|a||b|}b\otimes a-[a,b]$ for two homogeneous elements $a,b\in \g$.
By definition there is a map $\g \to U\g$ that turns out to be an embedding. The algebra $U\g$ is a bialgebra with the elements of $\g\subset U\g$ being primitive elements; moreover the set of primitive elements in $U\g$ coincides with $\g$. There is also an augmentation $U\g\to k$ that sends all elements of $\g$ to zero.

We will need to consider the completion $\hat{U}\g$ of $U\g$ at its augmentation ideal $\I$; i.e.~$\hat{U}\g\cong \varprojlim_n U\g/\I^n$. Note that for a general dg Lie algebra $\g$ it may happen that $\hat{U}\g=0$, such is the case, e.g.~when $\g$ is an ordinary semisimple Lie algebra. However when $\g$ is pro-nilpotent, $\hat{U}\g$ is always nontrivial; moreover the natural map $U\g\to\hat{U}\g$ is an embedding and so, $\g$ is likewise a subspace of $\hat{U}\g$. Then we define the group $G$ as the group of group-like elements in $\hat{U}\g$, i.e.~the set of elements $g\in\hat{U}\g$ such that $\Delta(g)=g\otimes g$.

There is, in fact, an equivalence of categories between pro-nilpotent Lie algebras, pro-nilpotent Lie groups and complete cocommutative Hopf algebras, cf.~\cite[Appendix A3]{Quillen}.

The group $G$ is called the \emph{gauge group} and acts on $\MC(\g)$ by gauge transformations:
\begin{prop}\label{prop:gaugeaction}
Let $g\in G$ and $x\in\MC(\g)$. Both elements $g$ and $x$ are viewed as lying in $\hat{U}\g$. Then the formula $g\cdot x:=gxg^{-1}-d(g)g^{-1}$ determines an action of $G$ on $\MC(\g)$.
\end{prop}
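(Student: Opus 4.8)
The plan is to verify the proposition by a direct computation in the completed enveloping algebra $\hat{U}\g$, where both the gauge group $G$ and the Maurer--Cartan set $\MC(\g)$ naturally live. First I would check that the formula $g\cdot x := gxg^{-1} - d(g)g^{-1}$ indeed lands in $\MC(\g)\subset\g^1\subset\hat{U}\g$; that is, I must show that $g\cdot x$ is again a primitive element of degree $1$ satisfying the master equation. Since $\g$ is exactly the set of primitives in $\hat{U}\g$, the key preliminary observation is that conjugation by a group-like element $g$ preserves primitivity, and that $d(g)g^{-1}$ is also primitive; this uses that $\Delta(g)=g\otimes g$ together with the fact that $d$ is a coderivation of the Hopf algebra structure, so $\Delta(d(g)) = d(g)\otimes g + g\otimes d(g)$, whence $\Delta\bigl(d(g)g^{-1}\bigr) = d(g)g^{-1}\otimes 1 + 1\otimes d(g)g^{-1}$.

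The heart of the argument is the master-equation check. In the associative algebra $\hat{U}\g$ the bracket becomes the graded commutator, so the MC equation for $x$ reads $d(x) + x^2 = 0$ (the factor $\tfrac12$ disappears since $[x,x] = 2x^2$ for $x$ of odd degree). Writing $y := g\cdot x = gxg^{-1} - d(g)g^{-1}$, I would compute $d(y) + y^2$ and show it vanishes. The computation splits naturally into applying the Leibniz rule to $d(gxg^{-1})$ and to $d(d(g)g^{-1})$, using $d(g^{-1}) = -(-1)^{|g|}g^{-1}d(g)g^{-1}$ (with $g$ of degree $0$ this simplifies), and then expanding the square $y^2$. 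After substituting $d(x) = -x^2$ and collecting terms, all contributions should cancel in pairs; the cross terms from $y^2$ precisely annihilate the mixed terms arising from $d(gxg^{-1})$ and $d(d(g)g^{-1})$. This is essentially the well-known gauge-invariance identity, and I would present it as a single telescoping calculation rather than grinding through each monomial.

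Finally I would verify the group-action axioms: that $e\cdot x = x$ for the identity $e=1\in G$, which is immediate since $d(1)=0$, and that $(gh)\cdot x = g\cdot(h\cdot x)$ for $g,h\in G$. The associativity of the action is again a direct manipulation: expanding $g\cdot(h\cdot x) = g(hxh^{-1}-d(h)h^{-1})g^{-1} - d(g)g^{-1}$ and comparing with $(gh)\cdot x = (gh)x(gh)^{-1} - d(gh)(gh)^{-1}$, the two agree once one applies the Leibniz rule $d(gh) = d(g)h + g\,d(h)$ and simplifies $(gh)^{-1} = h^{-1}g^{-1}$.

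I expect the main obstacle to be purely bookkeeping: keeping the Koszul signs straight when differentiating products of elements of mixed degree (here $g$ has degree $0$ but $x$ has degree $1$ and $d(g)$ has degree $1$), and justifying that all the formal manipulations are legitimate in the \emph{completed} algebra $\hat{U}\g$ rather than in $U\g$ itself. The latter point is where pro-nilpotence is essential: it guarantees that $g^{-1} = (1 + (g-1))^{-1}$ exists as a convergent geometric series in the $\I$-adic topology, and that all the infinite sums appearing in the computation converge, so that the identities hold rigorously and not merely formally.
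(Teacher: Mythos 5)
Your proof is correct, but it takes a genuinely different route from the paper's. The paper avoids the Leibniz-rule computation altogether: it first observes that when the differential vanishes the MC equation reads $[x,x]=0$ and the gauge action is plain conjugation, so the claim is obvious in that case; it then reduces the general case to this one by adjoining the differential as an element, i.e.\ passing to the graded Lie algebra $\tilde{\g}=\g\oplus k\cdot d$ with $[d,a]:=d(a)$ and $[d,d]=0$, in which an odd $x\in\g$ is MC if and only if $[\tilde{x},\tilde{x}]=0$ for $\tilde{x}:=x+d$, and one has $g\tilde{x}g^{-1}=\widetilde{g\cdot x}$ inside $\hat{U}\tilde{\g}$. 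With this dictionary, preservation of the MC condition and the group-action axioms follow at once from the fact that conjugation preserves square-zero elements and is obviously an action, with no sign bookkeeping. Your direct computation is equally valid: for $y=gxg^{-1}-d(g)g^{-1}$ one indeed gets $d(y)+y^2=g\bigl(d(x)+x^2\bigr)g^{-1}=0$ with exactly the telescoping cancellation you predict, and your associativity check $(gh)\cdot x=g\cdot(h\cdot x)$ via $d(gh)=d(g)h+g\,d(h)$ goes through. Moreover, your coalgebra argument that $g\cdot x$ is primitive (hence lands in $\g^1$) makes explicit a point that the paper's proof leaves implicit. What the paper's trick buys is brevity and conceptual transparency --- the gauge action \emph{is} conjugation, in a slightly larger algebra; what your approach buys is self-containedness, since no auxiliary Lie algebra $\tilde{\g}$ or its completed enveloping algebra needs to be introduced, at the cost of the sign and convergence bookkeeping you correctly flag (pro-nilpotence guaranteeing that all series converge in $\hat{U}\g$).
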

\begin{proof}
First note that if $\g$ has vanishing differential then the MC condition takes the form $[x,x]=0$ and the gauge action reduces to ordinary conjugation; the desired statement in this case is clear. We will reduce the general case to this one as follows. Introduce the graded Lie algebra $\tilde{\g}$ having underlying graded vector space $\g\oplus k\cdot d$ where $k\cdot d$ is the one-dimensional Lie algebra spanned by a symbol $d$ sitting in cohomological degree 1. By definition for $a\in\tilde{\g}$ we have $[d,a] := d(a)$, $[d,d]=0$ whereas $\g$ is a Lie subalgebra in $\tilde{\g}$.
Given $y\in\g$ denote by $\tilde{y}$ the element $y+d\in\tilde{\g}$. A straightforward check shows that an odd element
$x\in\g$ is Maurer-Cartan if and only if $[\tilde{x},\tilde{x}]= 0$. We will view an element
$g\in\g$ as an element in $\hat{U}\tilde{\g}$ via the embedding $\g\subset\tilde{\g}\subset\hat{U}\tilde{\g}$.
Since $d(g)=[d,g]=dg-gd\in \hat{U}\tilde{g}$ we have $d(g)g^{-1}=d-gdg^{-1}$ and so
\begin{align*}g\tilde{x}g^{-1} &= g(x+d)g^{-1}\\
&= gxg^{-1}+gdg^{-1}\\
&= gxg^{-1}+d-d(g)g^{-1}\\
&= \widetilde{g\cdot x}.
\end{align*}
So, any MC element $x\in \g$ gives rise to an MC element $\tilde{x}\in\tilde{\g}$ where $\tilde{\g}$ has vanishing differential and the gauge action in $\g$ corresponds to the conjugation action in $\tilde{\g}$. The desired statement is now obvious.
\end{proof}

Two MC elements $x,y\in\g$ are said to be \emph{gauge equivalent} if $x=g\cdot y$ for some $g\in\g^0$. We use $\sim$ to denote the corresponding equivalence relation.

\begin{defi}
Given a pro-nilpotent dg Lie algebra $\g$ we define its \emph{MC moduli set} $\MCmod(\g)$ as the set of equivalence classes
$\MC(\g)/{\sim}$ under gauge equivalence.
\end{defi}

If $A$ is a commutative dg algebra, we will write $\MCmod(\g,A)$ for $\MCmod(\g\otimes A)$.

Let us now discuss the important notion of \emph{homotopy} of MC elements. First, let $k[t,dt]$ be the graded commutative $k$-algebra generated by one polynomial generator $t$ in degree $0$ and one exterior generator in degree $1$. The differential is defined by the rule $d(t)=dt$ and extended to the whole $k[t,dt]$ by the Leibniz rule. Note that there are two maps $k[t,dt]\to k$ given by setting $t=0$ or $t=1$. Note that $k[t,dt]$ is a path object for $k$ in the model category $\CDGA$ of commutative dg algebras. Note also that for any dg Lie algebra $\g$ the tensor product $\g\otimes k[t,dt]=:\g[t,dt]$ is a dg Lie algebra and evaluations at 0 and 1 determine two dg Lie algebra maps $\g[t,dt]\to\g$.
\begin{defi}
Let $\g$ be a nilpotent dg Lie algebra.	Two MC elements $x,y\in\g$ are called \emph{Sullivan homotopic} if there exists $z\in\MC(\g[t,dt])$ such that $z|_{t=0}=x$ and $z|_{t=1}=y$.
\end{defi}
An important theorem due to Schlessinger and Stasheff \cite{SStasheff} shows that homotopy and gauge equivalence are equivalent notions for nilpotent dg Lie algebras.

\begin{thm}\label{th:SS}
Let $\g$ be a nilpotent dg Lie algebra. Then two MC elements $x,y\in\g$ are Sullivan homotopic if and only if they are gauge equivalent. In particular, the relation of homotopy on $\MC(\g)$ is an equivalence relation.
\end{thm}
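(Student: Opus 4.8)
The plan is to write the Maurer--Cartan equation in $\g[t,dt]$ in coordinates and recognise its two components as a path of MC elements together with the infinitesimal gauge flow. A general degree-$1$ element has the form $z=a(t)+b(t)\,dt$ with $a(t)\in\g^1\otimes k[t]$ and $b(t)\in\g^0\otimes k[t]$; writing $\dot a=\partial a/\partial t$ and using $dt\wedge dt=0$, I would expand $d(z)+\tfrac12[z,z]=0$ and separate the $dt$-free and $dt$-parts to obtain
\begin{align}
d(a)+\tfrac12[a,a]&=0,\\
\dot a+d(b)+[a,b]&=0.
\end{align}
The first equation says exactly that $a(t)\in\MC(\g)$ for every value of $t$, and the second is the evolution equation describing how $a(t)$ moves along the homotopy. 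These are the two identities I will match against the gauge action in each direction.

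For the implication gauge equivalent $\Rightarrow$ Sullivan homotopic, suppose $x=g\cdot y$ with $g=\exp(\xi)$, $\xi\in\g^0$. I would take $b(t):=-\xi$ constant and $a(t):=\exp((1-t)\xi)\cdot y$, so that $a(0)=x$ and $a(1)=y$; nilpotency of $\g$ makes the exponential series terminate, so $a(t)$ is genuinely polynomial in $t$. The first equation holds because gauge transforms of MC elements are MC by Proposition~\ref{prop:gaugeaction}, and the second I would verify by differentiating the conjugation formula $\widetilde{a(t)}=\exp((1-t)\xi)\,\tilde y\,\exp(-(1-t)\xi)$ in $\hat U\tilde\g$ (using the device $\tilde y=y+d$ from the proof of Proposition~\ref{prop:gaugeaction}), which yields $\dot a=[b,a]-d(b)=-d(b)-[a,b]$ precisely. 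Hence $z=a(t)+b(t)\,dt$ is the desired Sullivan homotopy.

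The converse, Sullivan homotopic $\Rightarrow$ gauge equivalent, is where the real work lies. Given $z=a(t)+b(t)\,dt\in\MC(\g[t,dt])$ with $a(0)=x$ and $a(1)=y$, the idea is to integrate the gauge flow: I would solve the linear initial value problem $\dot g(t)=b(t)\,g(t)$, $g(0)=1$, inside $\hat U\g$, the solution being a time-ordered exponential of $b$. Because each $b(t)$ is primitive this solution is group-like, hence lies in $G$. I would then compare the two paths $g(t)\cdot x$ and $a(t)$: differentiating $g(t)\cdot x$ through the conjugation description in $\hat U\tilde\g$ shows it satisfies $\dot u=-d(b)-[u,b]$, which is exactly the second equation above, while $a(t)$ satisfies the same equation; since both take the value $x$ at $t=0$, uniqueness forces $g(t)\cdot x=a(t)$ for all $t$. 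Evaluating at $t=1$ yields $g(1)\in G$ with $g(1)\cdot x=y$, so $x$ and $y$ are gauge equivalent.

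The main obstacle is this converse step: justifying the existence, uniqueness, group-likeness and polynomiality of the gauge-flow solution $g(t)$ in the purely algebraic pro-nilpotent setting, where no analytic Picard--Lindel\"of argument is available. I would handle it by solving order by order with respect to the lower central filtration of $\g$, on which the evolution equation becomes triangular and the relevant series terminate, giving a unique polynomial group-like solution and simultaneously the uniqueness needed to identify $g(t)\cdot x$ with $a(t)$. Finally, since gauge equivalence is an equivalence relation by Proposition~\ref{prop:gaugeaction}, the established coincidence of the two notions shows that Sullivan homotopy on $\MC(\g)$ is an equivalence relation as well, proving the last assertion.
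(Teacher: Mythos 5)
Your argument is correct, but it cannot be compared line-by-line with the paper's, because the paper offers no proof of Theorem \ref{th:SS} at all: it simply refers to \cite{Chlazarev}. What you have written is the standard direct proof that this citation points to, so in effect you supply the details the paper delegates to the literature. The decomposition $z=a(t)+b(t)\,dt$, the splitting of the Maurer--Cartan equation into ``$a(t)\in\MC(\g)$ for all $t$'' plus a flow equation for $\dot a$, the choice $a(t)=\exp((1-t)\xi)\cdot y$ with $b$ constant for one implication, and the integration of the gauge flow $\dot g=bg$ for the converse constitute exactly the classical Schlessinger--Stasheff argument. Three points should be made explicit. (i) You write every gauge group element as $\exp(\xi)$ with $\xi\in\g^0$; for nilpotent $\g$ in characteristic zero the group-like elements of $\hat{U}\g$ are precisely the exponentials of primitive elements, which is part of the equivalence the paper cites from Quillen's Appendix A3, so this is legitimate but must be invoked. (ii) The overall sign in your flow equation depends on the Koszul convention chosen for $\g\otimes k[t,dt]$ and may come out as $\dot a=d(b)+[a,b]$ rather than $\dot a=-d(b)-[a,b]$; this is harmless, since replacing $b$ by $-b$ (equivalently $g$ by $g^{-1}$) interchanges the two conventions, but one convention has to be fixed and used in both directions, as you indeed do. (iii) The obstacle you flag --- existence, uniqueness, group-likeness and polynomiality of $g(t)$ --- is genuine, and your resolution is the right one: writing $g(t)=\exp(\xi(t))$ converts $\dot g=bg$ into an equation for $\xi$ whose nonlinear terms strictly raise the lower central filtration, so it is solved by finitely many integrations of polynomials (this is where nilpotency and characteristic zero enter), and the same triangularity (or a lowest-$t$-degree argument, again using characteristic zero) gives uniqueness of polynomial solutions of the linear equation satisfied by the difference $g(t)\cdot x-a(t)$, forcing $g(1)\cdot x=y$. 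Compared with the paper's bare citation, your proof is self-contained, shows exactly where nilpotency and characteristic zero are used, and adapts directly to the pro-nilpotent case (work in the quotients $\g/\g^{[n]}$ and pass to the limit), which is what the remark following the theorem in the paper actually requires.
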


\begin{proof}
See, e.g.~\cite{Chlazarev}.
\end{proof}

\begin{rem}
The construction $\g[t, dt]:=\g\otimes k[t,dt]$ used in the definition of Sullivan homotopy makes sense for any dg Lie algebra $\g$. For a general  dg Lie algebra $\g$ one does not expect to get a reasonable definition of an equivalence of MC elements in $\g$ using this construction. Suppose that $\g$ is pro-nilpotent, in that case we define $\g[t,dt]:=\varprojlim_n(\g/\g^{[n]}[t,dt])$ and modify the notion of homotopy of MC elements accordingly. It is easy to see that Schlessinger-Stasheff theorem \ref{th:SS} remains valid in this context. Moreover, Theorem \ref{th:SS} has a natural interpretation in terms of model categories: it says, roughly speaking, that the notions of left and right homotopy for nilpotent dg Lie algebras agree (see \cite{Aiguan} for a precise statement and its generalizations).
\end{rem}

\subsection{MC moduli in dg algebras}
We will now outline a parallel treatment of MC moduli for associative augmented dg algebras. It will be convenient for us to work with \emph{non-unital} dg algebras, i.e. dg algebras not necessarily possessing a unit. It is well-known that the categories of non-unital dg algebras and of \emph{augmented} dg algebras are equivalent: given a non-unital dg-algebra $\g$ one can adjoin a unit forming an augmented dg algebra $\g_e:=\g\oplus k\cdot 1$, and conversely, any augmented dg algebra gives rise to a non-unital dg algebra, its augmentation ideal.
\begin{defi}\label{defi:MCass}
Let $\g$ is a non-unital dg algebra over a field $k$ of arbitrary characteristic.
An element $x \in \g$ is called an \emph{MC element} if it satisfies $d(x) + x^2 = 0$.
The set of all MC elements in $\g$ will be denoted by $\MC(\g)$.
\end{defi}

Assume from now on that the non-unital dg algebra $\g$ is \emph{pro-nilpotent}. In other words, we have $\g=\varprojlim_n\g/\g^{[n]}$; here $\g^{[n]}$ is the dg ideal of $\g$ generated by products of at least $n$ elements.  Clearly the elements of $\g_e$ of the form $1+i$ where $ i\in \g$, are invertible, and therefore form a group $G$ that we will call the \emph{gauge group} associated to $\g$.

\begin{prop}
Let $g \in G$ and $x \in \MC(\g)$. Then the formula $g\cdot x:=gxg^{-1}-d(g)g^{-1}$ determines an action of $G$ on $\MC(\g)$.
\end{prop}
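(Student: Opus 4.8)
The plan is to mirror the proof of Proposition~\ref{prop:gaugeaction} in the Lie setting: realize the differential of $\g$ as an inner derivation inside a larger graded associative algebra, so that the Maurer--Cartan condition becomes the vanishing of a square and the gauge action becomes honest conjugation. Concretely, I would form the graded associative algebra $\tilde{\g}_e$ obtained from $\g_e$ by adjoining a single symbol $\delta$ of cohomological degree $1$, subject to the relations $\delta^2 = 0$ and $\delta a - (-1)^{|a|}a\delta = d(a)$ for all homogeneous $a\in\g_e$. As a graded vector space $\tilde{\g}_e\cong\g_e\oplus\g_e\delta$, and one checks that these relations are consistent with associativity, the Leibniz identity for $d$ guaranteeing that moving $\delta$ past a product is unambiguous. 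Unlike the Lie case, no universal envelope is needed here since $\g$ is already associative; we simply view $\g_e$, and in particular $G$, as sitting inside $\tilde{\g}_e$.

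The key computation is that for $x\in\g^1$,
\[
(\delta + x)^2 = \delta^2 + (\delta x + x\delta) + x^2 = d(x) + x^2,
\]
so $x\in\MC(\g)$ if and only if the degree-one element $\delta+x$ squares to zero. Next I would transport this under conjugation by $g\in G$. Since $g$ has degree $0$, the defining relation gives $\delta g^{-1} = g^{-1}\delta + d(g^{-1})$, and differentiating $gg^{-1}=1$ yields $g\,d(g^{-1}) = -d(g)g^{-1}$; combining these,
\[
g(\delta + x)g^{-1} = \delta + g\,d(g^{-1}) + gxg^{-1} = \delta + \bigl(gxg^{-1} - d(g)g^{-1}\bigr) = \delta + g\cdot x.
\]
Because conjugation by an invertible element preserves the property of squaring to zero, $\delta + g\cdot x$ squares to zero whenever $\delta + x$ does, which is exactly the statement that $g\cdot x\in\MC(\g)$. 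The action axioms are then immediate from this reformulation: $1\cdot x = x$ is clear, and $(gh)(\delta+x)(gh)^{-1} = g\bigl(h(\delta+x)h^{-1}\bigr)g^{-1}$ translates directly into $(gh)\cdot x = g\cdot(h\cdot x)$.

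The main point requiring care is the same one implicit in the Lie case: everything must take place in a completed setting. Since $\g$ is pro-nilpotent, $g = 1+i$ with $i$ pro-nilpotent is invertible with $g^{-1} = \sum_{n\ge0}(-i)^n$ converging in the completion, and $\tilde{\g}_e$ should likewise be understood after completion so that conjugation and the element $\delta+x$ are well defined. Once the consistency of the relations defining $\delta$ (associativity together with $\delta^2=0$) is verified, the rest of the argument is the formal manipulation above, and no genuinely new difficulty beyond the Lie case arises. I expect the bookkeeping of signs in the relation $\delta a - (-1)^{|a|}a\delta = d(a)$, and confirming that it is compatible with $d$ being a square-zero derivation, to be the only delicate step.
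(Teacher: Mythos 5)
Your proof is correct and follows essentially the same route as the paper: the paper's own argument also adjoins a square-zero degree-one element $d$ (forming the associative algebra $\tilde{\g}$ with underlying space $\g\otimes k[d]$ and commutation relation $da-(-1)^{|a|}ad=d(a)$), so that MC elements correspond to square-zero elements $d+x$ and the gauge action becomes conjugation. Your version merely spells out the computation $(\delta+x)^2=d(x)+x^2$ and $g(\delta+x)g^{-1}=\delta+g\cdot x$ that the paper leaves implicit by analogy with Proposition \ref{prop:gaugeaction}.
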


This action is well-defined by a similar argument as for Proposition~\ref{prop:gaugeaction}; this time we should make use of the associative algebra $\tilde{\g}$ having underlying space $\g\otimes k[d]$ where $d$ is a degree one element with $d^2=0$ (so that $k[d]$ is the exterior algebra on $d$ which can be viewed as the universal enveloping algebra of the abelian Lie algebra $k\cdot d$). The product in $\tilde{\g}$ is determined by requiring that $\g$ and $k[d]$ are subalgebras in $\tilde{\g}$ and there is a commutation relation $[d,a]=da-(-1)^{|a|}ad=d(a)$ for $a$ being a homogeneous element in $\g$ of degree $|a|$.
As before, we say that two MC elements $x,y\in \g$ are \emph{gauge equivalent} if $x=g\cdot y$ for some $g\in G$ and let $\sim$ denote the corresponding equivalence relation.

\begin{defi}
The \emph{MC moduli set} $\MCmod(\g)$ is the set of equivalence classes
$\MC(\g)/{\sim}$ under gauge equivalence.

As before, if $A$ is another dg algebra then we write $\MC(\g,A)$ for $\MC(\g\otimes A)$, and write $\MCmod(\g,A)$ for $\MCmod(\g\otimes A)$.
\end{defi}
The notion of \emph{homotopy} between two MC elements in an augmented dg algebra $\g$ can be treated in the same way as for dg Lie algebras, with an appropriate analogue of the Schlessinger-Stasheff, see \cite[Theorem 4.4]{CHL} where this approach is carried out in the smooth context. We will now describe a simple alternative way, that has the added advantage of not requiring that $k$ has characteristic zero.

Consider the dg algebra $\Int$ spanned by two vectors $a, b$ in degree $0$ and one vector $c$ in degree $1$. The differential is given by
\[d(a)=c,\quad d(b)=-c,\quad d(c)=0\] and the algebra structure is specified by
\[a^2=a, \quad b^2=b,
\quad ca=c,
\quad bc=c,
\quad ab=ba=c^2=0,\]
with unit element $1=a+b$. This is the cochain algebra on the standard cellular interval with two 0-cells corresponding to the endpoints and one 1-cell. The dg algebra $\g\otimes\Int$ is a path object for non-unital dg algebra $\g$. There are two `evaluation' maps $p_1,p_2:\Int\to k$ so that $p_1(c)=p_2(c)=0; p_1(a)=1, p_1(b)=1$, and these induce the corresponding evaluation maps $\g\otimes\Int\to \g$ required in the definition of the path object.
\begin{defi}\label{def:homass}
Let $\g$ be a non-unital dg algebra. Then two MC elements $x,y\in\g$ are \emph{homotopic}
if there exists $z\in\MC(\g\otimes\Int)$ such that $(1\otimes p_1)(z)=x$ and $(1\otimes p_2)(z)=y$.
\end{defi}
We have the following analogue of the Schlessinger-Stasheff theorem.
\begin{thm}\label{thm:SSass}
Let $\g$ be as in Definition \ref{def:homass}. Then two MC elements in $\g$ are homotopic if and only if they are gauge equivalent. In particular, the relation of homotopy on $\MC(\g)$ is an equivalence relation.	
\end{thm}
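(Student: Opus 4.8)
The plan is to reduce the entire statement to a single explicit computation of the Maurer--Cartan equation in $\g\otimes\Int$. A general element of degree $1$ in $\g\otimes\Int$ has the form $z=x\otimes a+y\otimes b+w\otimes c$ with $x,y\in\g^1$ and $w\in\g^0$, and by construction $(1\otimes p_1)(z)=x$ and $(1\otimes p_2)(z)=y$. Thus $x$ and $y$ are homotopic precisely when there is such a $z$ with $d(z)+z^2=0$. First I would expand $d(z)+z^2$ and collect the coefficients of $a$, $b$ and $c$. I expect the $a$- and $b$-components to recover exactly $d(x)+x^2=0$ and $d(y)+y^2=0$, so that the two endpoints are forced to be MC elements, while the $c$-component produces a single coupling relation between $x$, $y$ and $w$.

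Carrying out the expansion requires the full multiplication table of $\Int$. Before multiplying I would first deduce the products left implicit in the definition: applying $d$ as a derivation to $a^2=a$ gives $ca+ac=c$, hence $ac=0$, and similarly $b^2=b$ forces $cb=0$. With these in hand, together with $a^2=a$, $b^2=b$, $ca=c$, $bc=c$ and $ab=ba=c^2=0$, and with the Koszul sign rule $(u_1\otimes v_1)(u_2\otimes v_2)=(-1)^{|v_1||u_2|}u_1u_2\otimes v_1v_2$ for the tensor product of dg algebras, the cross terms collapse dramatically: the only surviving $c$-contributions to $z^2$ come from $(y\otimes b)(w\otimes c)$ and $(w\otimes c)(x\otimes a)$. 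I would then obtain, for the $c$-component,
\[
d(w)=x-y+wx-yw,
\]
alongside the two endpoint equations.

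The final step is to recognize this coupling relation as the gauge condition. Setting $g:=1+w$, which is a degree-$0$ element of the gauge group $G$ since $w\in\g^0$, the gauge action formula $y=g\cdot x=gxg^{-1}-d(g)g^{-1}$ is equivalent, after multiplying on the right by $g$, to $d(g)=gx-yg$; for $g=1+w$ this reads $d(w)=x-y+wx-yw$, which is exactly the $c$-component equation. Thus a homotopy $z$ from $x$ to $y$ exists if and only if $x,y\in\MC(\g)$ and $y=(1+w)\cdot x$ for some $w\in\g^0$, i.e.\ if and only if $x$ and $y$ are gauge equivalent. This proves both implications simultaneously. The ``in particular'' assertion is then immediate: gauge equivalence is the orbit relation of a group action and so is automatically an equivalence relation.

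The main obstacle I anticipate is purely bookkeeping: getting every Koszul sign correct and, crucially, matching the coupling relation to the correct normalization of the gauge action. A wrong sign convention would instead suggest the pairing $g=1-w$, or the relation $x=g\cdot y$ rather than $y=g\cdot x$, and only one of these choices makes the two equations coincide on the nose; verifying that this choice is internally consistent is where care is needed. Note that no characteristic hypothesis enters anywhere, which is the whole point of using $\Int$ in place of the Sullivan path object $\g\otimes k[t,dt]$.
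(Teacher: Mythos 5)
Your proposal is correct and takes essentially the same approach as the paper's proof: both expand a general degree-$1$ element of the path object in the basis $\{a,b,c\}$, reduce the MC equation to the two endpoint equations plus the coupling relation $d(w)=(1+w)x-y(1+w)$, and recognize this as the gauge relation $y=(1+w)\cdot x$. The only differences are cosmetic --- the paper writes $z=a\otimes z_1+b\otimes z_2+c\otimes h$ in $\Int\otimes\g$ and states the outcome of the computation without detail, whereas you carry out the sign bookkeeping (correctly) and make the ``in particular'' clause explicit via the orbit-relation observation.
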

\begin{proof}
	Any element $z\in \I\otimes \Int \cong \Int\otimes \I$ may be written uniquely as $z = a\otimes z_1 + b\otimes z_2 + c\otimes h$ with $z_1,z_2, h\in\I$.
	The MC equation for $z$ is equivalent to  $z_1$ and $z_2$ being MC elements such that $d(h)=(1+h)z_1-z_2(1+h)$ inside $\g_e$. Since $1+h$ is invertible then the latter equation could be rewritten as $z_2=(1+h)z_1(1+h)^{-1}-d(1+h)(1+h)^{-1}$ so $z_1$ and $z_2$ are gauge equivalent.
\end{proof}		
	
\section{Koszul duality}

We will need a certain amount of theory of topological vector spaces, although we will be dealing with one of the simplest possible type of topological vector space -- pseudocompact spaces.
\begin{defi}
A \emph{pseudocompact vector space} is a topological vector space that is complete and whose fundamental system of neighbourhoods of zero is formed by subspaces of finite codimension. Morphisms of pseudocompact vector spaces are assumed to be continuous.
A \emph{graded pseudocompact vector space} is a graded object in the category of pseudocompact vector spaces, i.e.~a sequence $V^i$, $i\in\mathbb{Z}$ where each $V^i$ is a pseudocompact vector space with morphisms defined component-wise. Finally, a \emph{dg pseudocompact vector space} is a graded pseudocompact vector space $V^i$, $i\in\mathbb{Z}$ with a continuous differential.
\end{defi}

The categories of vector spaces (over $k$) and pseudocompact vector spaces will be denoted by $\Vect$ and $\pcVect$ respectively.
The categories of dg vector spaces and dg pseudocompact vector spaces will be denoted by $\DGVect$ and $\pcDGVect$ respectively.
For a dg (possibly pseudocompact) vector space $V$, its suspension $\Sigma V$ is the graded vector space $(\Sigma V)^i = V^{i+1}$.

\begin{prop}\label{prop:equiv}
The category $\Vect$ is anti-equivalent to $\pcVect$, and the category $\pcDGVect$ is anti-equivalent to $\DGVect$.
\end{prop}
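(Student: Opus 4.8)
The plan is to exhibit the anti-equivalence concretely as continuous linear duality, first for plain vector spaces and then to upgrade it to the dg setting degreewise. For $V\in\Vect$ I would set $V^{\ast}:=\Hom(V,k)$ and topologise it by taking the annihilators $F^{\perp}:=\{\varphi\in V^{\ast}:\varphi|_{F}=0\}$ of the finite-dimensional subspaces $F\subseteq V$ as a fundamental system of neighbourhoods of $0$. This is pseudocompact: each $V^{\ast}/F^{\perp}$ is canonically isomorphic to the finite-dimensional space $F^{\ast}$, and since $V=\varinjlim_{F}F$ is the filtered union of its finite-dimensional subspaces, $V^{\ast}=\varprojlim_{F}F^{\ast}$ is complete with a basis of neighbourhoods of $0$ of finite codimension. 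Conversely, for $W\in\pcVect$ I would set $W^{\vee}:=\Hom_{\mathrm{cont}}(W,k)$, the space of continuous functionals, regarded as an ordinary (discrete) vector space. Both assignments are manifestly contravariant functors, so it remains to show they are mutually quasi-inverse.

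The key step is to verify that the evaluation maps $\eta_{V}\colon V\to(V^{\ast})^{\vee}$, $v\mapsto\operatorname{ev}_{v}$, and the analogous $\varepsilon_{W}\colon W\to(W^{\vee})^{\ast}$ are natural isomorphisms. The decisive lemma is that a functional on a pseudocompact $W$ is continuous if and only if it vanishes on some open subspace $U$ of finite codimension, equivalently factors through the finite-dimensional quotient $W/U$; thus $W^{\vee}=\varinjlim_{U}(W/U)^{\ast}$. Combined with the presentation $W=\varprojlim_{U}W/U$, this reduces both double-dual identities to the classical fact that $F\cong F^{\ast\ast}$ canonically for finite-dimensional $F$, together with the observation that duality interchanges the filtered colimit presenting $\Vect$ with the cofiltered limit presenting $\pcVect$. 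For the surjectivity of $\eta_{V}$, for instance, a continuous functional on $V^{\ast}$ must vanish on some $F^{\perp}$, hence factor through $V^{\ast}/F^{\perp}\cong F^{\ast}$ and so coincide with $\operatorname{ev}_{v}$ for a unique $v\in F\subseteq V$.

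To pass to the dg case I would apply the construction degreewise and dualise the differential: the differential $d\colon V^{i}\to V^{i+1}$ of $V\in\DGVect$ dualises to a transpose which, with the standard regrading $(V^{\ast})^{i}=(V^{-i})^{\ast}$, becomes a continuous differential on $V^{\ast}$ squaring to zero, making $V^{\ast}$ a dg pseudocompact vector space; the reverse construction dualises a continuous differential on $W\in\pcDGVect$ to an ordinary one on $W^{\vee}$. Since forgetting the differential is faithful and the ungraded duality is already an anti-equivalence in each degree, the transformations $\eta$ and $\varepsilon$ are degreewise isomorphisms compatible with the differentials, hence isomorphisms of dg (pseudocompact) vector spaces. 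The hard part throughout is the continuity lemma together with the surjectivity of the double-dual maps --- that is, ruling out ``phantom'' continuous functionals not coming from a finite-dimensional quotient --- which is exactly where completeness of pseudocompact spaces is used; everything else is formal manipulation of (co)filtered (co)limits.
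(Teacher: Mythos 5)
Your proof is correct and follows essentially the same route as the paper's: the linear dual $V\mapsto V^{\ast}=\varprojlim_{F}F^{\ast}$ in one direction, the continuous dual in the other, with the limit/colimit interchange doing the work, and the dg case handled degreewise. The only difference is one of detail: the paper dismisses the verification of quasi-inverseness as ``straightforward to see,'' whereas you spell it out via the continuity lemma (a continuous functional factors through a finite-dimensional quotient) and the double-dual evaluation maps, which is exactly the intended argument.
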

\begin{proof}
Given a vector space $V$, its $k$-linear dual $V^*$ is pseudocompact. Indeed, denoting by $\{V_\alpha\}$ the collection of finite-dimensional subspaces of $V$, we have $V=\varinjlim_\alpha V_\alpha$ and therefore $V^*=\varprojlim V^*_\alpha$. So, $V^*$ is complete with respect to the kernels of maps into finite-dimensional spaces. The functor backwards associates to a pseudocompact vector space $V$ its \emph{continuous} linear dual $V^*$. It is straightforward to see that this gives the desired anti-equivalence. The dg case is similar.
\end{proof}	
The above proof shows that every (dg) pseudocompact vector space $V$ is a projective limit of its finite dimensional (dg) quotients $V_\alpha:V\cong\varprojlim_\alpha V_\alpha$. Conversely, a projective system of finite-dimensional dg vector spaces determines a dg pseudocompact vector space. Given two dg pseudocompact vector spaces $V\cong\varprojlim_\alpha V_\alpha$ and $U\cong\varprojlim_\beta U_\beta$ the dg (not pseudocompact in general) space of morphisms $V\to U$ is $\Hom(V,U)\cong \varprojlim_\beta\varinjlim_\alpha(V_\alpha, U_\beta)$.

Recall that the category $\DGVect$ has a symmetric monoidal structure given by the usual tensor product. Similarly for two
dg pseudocompact vector spaces $V=\varprojlim_\alpha V_\alpha$ and $U=\varprojlim_\beta U_\beta$ their completed tensor product is defined as $V \mathbin{\hat\otimes} U:=\varprojlim_{\alpha,\beta}V_\alpha\otimes U_\beta$. We will omit the hat over the symbol of the tensor product as it will always be understood. With this definition the anti-equivalence of Proposition \ref{prop:equiv} is that of symmetric monoidal categories, i.e.~there are natural isomorphisms $(V\otimes U)^*\cong V^*\otimes U^*$ where $U$ and $V$ are either dg vector spaces or pseudocompact vector spaces.

\subsection{DG coalgebras and pseudocompact dg algebras} Just as a (commutative) dg algebra can be defined succinctly as a (commutative) monoid in the symmetric monoidal category $\DGVect$, a (cocommutative) dg coalgebra is defined as a (cocommutative) comonoid in $\DGVect$. Using the monoidal anti-equivalence of Proposition \ref{prop:equiv} we see that the category of (cocommutative) dg coalgebras is anti-equivalent to category of (commutative) pseudocompact dg algebras, that is, (commutative) monoids in $\pcDGVect$. We will denote the latter category by $\pcDGA$ and $\pcCDGA$ in the commutative case.

The following result is a dg version of the so-called \emph{fundamental theorem on coalgebras}.
\begin{thm}\label{th:fundamental}
Any (cocommutative) dg coalgebra is a union of its finite-dimensional dg subcoalgebras.
\end{thm}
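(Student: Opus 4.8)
The plan is to prove the sharper statement that every homogeneous element $c$ of a dg coalgebra $C$ lies in a finite-dimensional dg subcoalgebra. The theorem then follows formally: an arbitrary element is a finite sum of homogeneous components, a finite sum of finite-dimensional subcoalgebras is again a finite-dimensional subcoalgebra, and the finite-dimensional dg subcoalgebras form a directed system (closed under sums) whose union is therefore all of $C$. The proof of the sharper statement has two ingredients: first, the purely coalgebraic fact (ignoring the differential) that $c$ sits inside a finite-dimensional \emph{graded} subcoalgebra $D$; second, enlarging $D$ so that it becomes closed under $d$.

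For the first ingredient I would run the classical "matrix coefficient" argument in the graded setting. Write $\Delta_2:=(\Delta\otimes\id)\Delta=(\id\otimes\Delta)\Delta$ for the iterated comultiplication, which is well defined by coassociativity, and choose a homogeneous expression
\[
\Delta_2(c)=\sum_{i=1}^{m}\sum_{j=1}^{n}x_i\otimes e_{ij}\otimes y_j
\]
in which the homogeneous families $\{x_i\}$ and $\{y_j\}$ are each linearly independent. Let $D$ be the linear span of the $e_{ij}$; it is finite-dimensional and graded. Applying the counit in the two outer slots and invoking the counit axioms gives $c=\sum_{i,j}\varepsilon(x_i)\varepsilon(y_j)\,e_{ij}\in D$ (only degree-zero $x_i,y_j$ contribute, so no Koszul signs intervene). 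To see that $D$ is a subcoalgebra I would compute the threefold iterate $\Delta_3(c)$ in two ways allowed by coassociativity, expand $\Delta$ in the middle slot, and pair the resulting identity in $C^{\otimes 4}$ against functionals dual to the linearly independent families $\{x_i\}$ and $\{y_j\}$. This expresses each $\Delta(e_{ij})$ simultaneously as an element of $C\otimes D$ and of $D\otimes C$. Splitting off a linear complement of $D$ in $C$ shows $(C\otimes D)\cap(D\otimes C)=D\otimes D$ over a field, whence $\Delta(D)\subseteq D\otimes D$ and $D$ is a graded subcoalgebra. If $C$ is cocommutative then $D$ inherits cocommutativity automatically, since $\tau\Delta=\Delta$ on $C$ restricts to $D$; no separate argument is needed.

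For the second ingredient I would replace $D$ by $D+d(D)$, which is still finite-dimensional and still contains $c$. Because the differential of a dg coalgebra is a coderivation, $\Delta d=(d\otimes\id+\id\otimes d)\Delta$, so for $x\in D$ we have $\Delta(dx)\in dD\otimes D+D\otimes dD\subseteq (D+dD)\otimes(D+dD)$, while $\Delta(D)\subseteq D\otimes D$; hence $D+dD$ is a subcoalgebra. Finally $d(D+dD)=dD+d^2D=dD\subseteq D+dD$ using $d^2=0$, so $D+dD$ is closed under the differential and is the desired finite-dimensional dg subcoalgebra containing $c$.

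I expect the only genuinely delicate point to be the first ingredient, and within it the verification that $D$ is a subcoalgebra: the comparison of the two coassociative expansions of $\Delta_3(c)$ together with the intersection identity $(C\otimes D)\cap(D\otimes C)=D\otimes D$ is the real coalgebraic engine, and it is exactly here that working over a field is used. By contrast, the reduction from arbitrary to homogeneous elements, the passage to the differential-closed subcoalgebra $D+dD$, and the inheritance of cocommutativity are all routine.
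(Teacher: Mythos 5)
Your proposal is correct and takes essentially the same approach as the paper: the paper's proof of the dg statement is precisely your second ingredient, namely that any finite-dimensional (non-differential) subcoalgebra $A$ is contained in the finite-dimensional dg subcoalgebra $A+d(A)$. The only difference is that where the paper simply cites Sweedler's fundamental theorem of coalgebras for the classical ingredient, you reprove it via the standard matrix-coefficient argument adapted to the graded setting --- more self-contained (and it keeps the subcoalgebra graded, a point the paper leaves implicit), but not a different method.
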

\begin{proof}
The non-dg version of the theorem is well-known, cf.~for example \cite[Theorem 2.2.1]{Sweedler}. The dg version is an easy consequence since any (possibly non-differential) subcoalgebra $A$ of a dg coalgebra $C$ is contained in the dg subcoalgebra $A+d(A)$ which is clearly finite-dimensional.
\end{proof}
\begin{cor}\label{cor:fund}
Any (commutative) pseudocompact dg algebra is the projective limit of its finite-dimensional quotients.
\end{cor}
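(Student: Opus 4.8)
The plan is to deduce this directly from the fundamental theorem on coalgebras (Theorem~\ref{th:fundamental}) by transporting the statement across the monoidal anti-equivalence of Proposition~\ref{prop:equiv}. Recall that under this anti-equivalence the category of (cocommutative) dg coalgebras corresponds to $\pcDGA$ (resp.\ $\pcCDGA$), a commutative pseudocompact dg algebra $A$ being the continuous dual $C^*$ of the dg coalgebra $C=A^*$. Since a contravariant equivalence sends colimits to limits and monomorphisms to epimorphisms, the only substantive point will be to check that finite-dimensional dg \emph{subcoalgebras} of $C$ dualize precisely to finite-dimensional \emph{quotient algebras} of $A$; note that the purely vector-space version of the statement (that $A$ is the projective limit of its finite-dimensional quotients as a pseudocompact space) was already recorded after the proof of Proposition~\ref{prop:equiv}, so the real content here is the upgrade of those quotients to algebra quotients.

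Concretely, given a pseudocompact dg algebra $A$, I would write $C=A^*$ for the corresponding dg coalgebra. By Theorem~\ref{th:fundamental}, $C$ is the union of its finite-dimensional dg subcoalgebras $C_\alpha$. This union is filtered: the sum $C_\alpha+C_\beta$ of two finite-dimensional subcoalgebras is again a finite-dimensional dg subcoalgebra containing both, so the poset of finite-dimensional dg subcoalgebras is directed, and hence $C\cong\varinjlim_\alpha C_\alpha$ as a filtered colimit of inclusions $C_\alpha\hookrightarrow C_\beta$. Dualizing, each inclusion $C_\alpha\hookrightarrow C$ becomes a surjection $A=C^*\twoheadrightarrow C_\alpha^*$ onto a finite-dimensional pseudocompact dg algebra (duality restricts to ordinary finite-dimensional linear duality on each $C_\alpha$, so $C_\alpha^*$ is genuinely finite-dimensional), and the transition inclusions become the transition surjections $C_\beta^*\twoheadrightarrow C_\alpha^*$. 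Because the anti-equivalence carries the filtered colimit into a projective limit, one obtains
\[
A\;\cong\;C^*\;\cong\;\varprojlim_\alpha C_\alpha^*,
\]
exhibiting $A$ as the projective limit of its finite-dimensional quotients. In the commutative case one simply observes that cocommutative subcoalgebras dualize to commutative quotient algebras, so the argument applies verbatim in $\pcCDGA$.

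The main thing to be careful about is the compatibility of the anti-equivalence with the relevant limits and colimits: one must confirm that the $C_\alpha^*$ really are quotients of $A$ in $\pcDGA$ (rather than mere subquotients of the underlying space) and that the projective limit of these quotients recovers $A$ together with its pseudocompact topology. This is guaranteed by Proposition~\ref{prop:equiv} together with the description $\Hom(V,U)\cong\varprojlim_\beta\varinjlim_\alpha(V_\alpha,U_\beta)$ of morphisms of pseudocompact spaces given after its proof; it is the one place in the argument that requires more than the formal colimit-to-limit dictionary, and everything else is bookkeeping.
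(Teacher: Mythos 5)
Your proposal is correct and follows essentially the same route as the paper: dualize $A$ to the dg coalgebra $A^*$, invoke the fundamental theorem on coalgebras (Theorem~\ref{th:fundamental}), and transport the resulting filtered union of finite-dimensional dg subcoalgebras back through the anti-equivalence of Proposition~\ref{prop:equiv} to exhibit $A$ as the projective limit of its finite-dimensional quotients. The paper compresses this into one sentence, while you carefully verify the directedness of the system and the matching of subcoalgebras with quotient algebras under duality, but the underlying argument is identical.
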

\begin{proof}
Given a (commutative) pseudocompact dg algebra $A$, its $k$-linear dual $A^*$ is a (cocommutative) dg coalgebra. Then the desired statement is equivalent to saying that $A^*$ is an inductive limit (i.e.~a union) of its finite-dimensional dg subcoalgebras which is Theorem \ref{th:fundamental}.	
\end{proof}
\begin{rem}
Theorem \ref{th:fundamental} (and hence, Corollary \ref{cor:fund}) uses the associativity condition in an essential way and does not hold for other algebraic structures (e.g.~Lie coalgebras), see \cite[Section 2.4]{Positselski'} for an example of a Lie coalgebra possessing no proper Lie subcoalgebras at all.
\end{rem}
We will consider \emph{coaugmented} dg coalgebras, i.e.~dg coalgebras $C$ supplied with a dg coalgebra map $k\to C$. In this case the quotient $C/k$ is a dg coalgebra without a counit. Given a dg coalgebra $C$ we denote by $\Delta=\Delta^1:A\to A\otimes A$ its comultiplication and by $\Delta^n:A\to A^{\otimes n}$ its $n$th iteration. If for a coaugmented dg coalgebra $C$ we have $C/k=\bigcup_{n=1}^\infty\Ker(\Delta^n)$ then $C$ is called \emph{conilpotent}. It is easy to see that $C$ is conilpotent if an only if its dual pseudocompact dg algebra $C^*$ is augmented and for its augmentation ideal $I$ it holds that $C^*\cong \varprojlim_nC^*/I^n$. In other words, $C^*$ is a local complete augmented dg algebra with the maximal dg ideal $I$. Note also that if an augmented pseudocompact dg algebra is local (i.e.~its augmentation ideal $I$ is a unique dg maximal ideal) then it is automatically $I$-adically complete since its every ideal with finite-dimensional quotient must contain some power of $I$ and so its every finite-dimensional quotient factors through a power of $I$. All told, we have the following result.
\begin{prop}
The category of (cocommutative) conilpotent dg coalgebras is anti-equivalent to the category of local augmented (commutative) pseudocompact dg algebras.
We will denote that latter category by $\pcDGAloc$ and $\pcCDGAloc$ in the commutative case.
\qed
\end{prop}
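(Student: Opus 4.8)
The plan is to produce the anti-equivalence simply by restricting the symmetric monoidal anti-equivalence $V\mapsto V^*$ of Proposition~\ref{prop:equiv} to suitable full subcategories. As already observed, being symmetric monoidal this functor sends (cocommutative) comonoids to (commutative) monoids, hence identifies (cocommutative) dg coalgebras with (commutative) pseudocompact dg algebras. Since $k^*=k$, the dual of a coaugmentation $k\to C$ is an augmentation $C^*\to k$ and conversely, so the anti-equivalence descends to one between coaugmented dg coalgebras and augmented pseudocompact dg algebras. Because conilpotency and locality are conditions on objects and not on morphisms, the relevant subcategories are full, and it therefore suffices to check that dualization and its quasi-inverse exchange the two conditions, i.e.~that $C$ is conilpotent if and only if $C^*$ is local.

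First I would reduce to the finite-dimensional situation and establish the core duality there. Given a conilpotent coaugmented dg coalgebra $C$, Theorem~\ref{th:fundamental} writes $C=\bigcup_\alpha C_\alpha$ as a union of finite-dimensional dg subcoalgebras, which we may enlarge so that each contains the image of the coaugmentation; each $C_\alpha$ is then finite-dimensional and conilpotent. The key finite-dimensional fact is that a finite-dimensional coaugmented coalgebra is conilpotent exactly when the augmentation ideal $I$ of its dual algebra is nilpotent, so that the dual is local: the conilpotency filtration $\Ker(\Delta^n)$ on $C_\alpha$ is the annihilator of the $I$-adic filtration $I^n$ on $C_\alpha^*$, whence $C_\alpha/k=\bigcup_n\Ker(\Delta^n)$ forces $\Ker(\Delta^N)=C_\alpha/k$, and dually $I^N=0$, for some $N$. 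Dualizing the union gives $C^*\cong\varprojlim_\alpha C_\alpha^*$, a projective limit of finite-dimensional local augmented dg algebras along augmentation-preserving maps; such a limit is again local, and by the remark preceding the statement a local augmented pseudocompact dg algebra is automatically $I$-adically complete, so $C^*$ lies in $\pcDGAloc$ (respectively $\pcCDGAloc$).

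For the converse, given a local augmented pseudocompact dg algebra $A$, Corollary~\ref{cor:fund} presents it as $A\cong\varprojlim_\beta A_\beta$ with each $A_\beta$ a finite-dimensional quotient. Each $A_\beta$ inherits locality, since the augmentation ideal remains the unique maximal ideal of a quotient by a dg ideal of finite codimension, so its augmentation ideal is nilpotent and its dual $A_\beta^*$ is a finite-dimensional conilpotent coalgebra by the same finite-dimensional duality; then $A^*\cong\varinjlim_\beta A_\beta^*$ is a filtered union of conilpotent subcoalgebras and is therefore conilpotent. I expect the main obstacle to be precisely this finite-dimensional duality: verifying that the conilpotency filtration by the kernels $\Ker(\Delta^n)$ is the annihilator of the filtration by powers $I^n$ of the augmentation ideal on the dual. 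Once this pairing of filtrations is in place, together with the elementary equivalence of locality and nilpotence of $I$ in a finite-dimensional augmented algebra, both implications follow by passing to the appropriate (co)limits, and compatibility with the (co)commutative and differential structures is inherited automatically from the monoidal, differential anti-equivalence of Proposition~\ref{prop:equiv}.
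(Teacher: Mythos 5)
Your argument is correct, but it is organized quite differently from the paper's. The paper's entire proof is the paragraph preceding the proposition: it identifies conilpotency of $C$ \emph{directly} with $I$-adic completeness of the dual, i.e.\ $C^*\cong\varprojlim_n C^*/I^n$ (the filtration $\Ker(\Delta^n)$ being dual to the tower of quotients $C^*/I^n$), observes that completeness forces locality, and conversely that a local augmented pseudocompact dg algebra is automatically $I$-adically complete because every finite-codimensional dg ideal has a local finite-dimensional quotient, whose maximal ideal is nilpotent, so the ideal contains a power of $I$. You instead reduce \emph{both} directions to finite dimensions --- via Theorem \ref{th:fundamental} for coalgebras and Corollary \ref{cor:fund} for pseudocompact algebras --- prove the duality there (conilpotent $\iff$ augmentation ideal nilpotent $\iff$ local), and transfer it back through limits and colimits. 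The computational core is the same in both treatments: the annihilator pairing between the conilpotency filtration and the powers of the augmentation ideal, together with the fact that a finite-dimensional local augmented dg algebra has nilpotent augmentation ideal (the paper needs this for ``local implies complete''; you need it in the finite-dimensional duality and in the converse). What your route buys is that all filtration comparisons take place in finite dimensions, where the pairing is perfect and there are no questions about closures of the ideals $I^n$ in the pseudocompact topology --- a point that the paper's formulation $C^*\cong\varprojlim_n C^*/I^n$ quietly glosses over; it also makes visible exactly where Theorem \ref{th:fundamental} and Corollary \ref{cor:fund} are used. What the paper's route buys is brevity and a standalone fact worth recording: for augmented pseudocompact dg algebras, locality and $I$-adic completeness are equivalent. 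One debt in your write-up: the assertion that a cofiltered limit of finite-dimensional local augmented dg algebras is again local is stated without proof; it is true, but deserves the (standard) argument that any $y$ in the limit's augmentation ideal has nilpotent image at each finite-dimensional stage, so the geometric series for $(1-y)^{-1}$ converges in the limit, whence $1+I$ consists of units and every proper dg ideal lies in $I$.
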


\subsection{Quillen equivalence between $\DGLA$ and $\pcCDGAlocop$}

We now explain a Quillen equivalence between $\DGLA$ and $\pcCDGAlocop$ due to Hinich \cite{Hinich}, also called \emph{Koszul duality}, which is at the heart of the modern approach to deformation theory. We assume that the ground field $k$ has characteristic zero. A similar approach works for algebras and (suitably defined) local pseudocompact algebras over a pair of Koszul dual operads; we will not pursue this in full generality but consider, later on, an associative analogue of this story.

Any local augmented pseudocompact commutative dg algebra $A$ with augmentation ideal $I(A)$ determines a dg Lie algebra as follows.
\begin{defi}\label{def:har}
Let $A\in \pcCDGAloc$ and set $\Harr(A)$ to be the dg Lie algebra whose underlying space is the free Lie algebra on $\Sigma^{-1} I(A)^*$ and the differential $d$ is defined as $d=d_{I}+d_{II}$; here $d_I$ is induced by the internal differential on $I(A)$ and $d_{II}$ is determined by its restriction onto $\Sigma^{-1} I(A)^*$ which is in turn induced by the product map $I(A)\otimes I(A)\to I(A)$.
\end{defi}
\begin{rem}
Note that since $I(A)$ is pseudocompact, its dual $I(A)^*$ is discrete and thus, the dg Lie algebra $\Harr(A)$ is a conventional dg Lie algebra (with no topology). The construction $\Harr(A)$ is the continuous version of the Harrison complex associated with a commutative dg algebra.
\end{rem}

Similarly, any dg Lie algebra determines a local pseudocompact commutative dg algebra as follows.
\begin{defi}\label{def:ce}For a dg Lie algebra $\g$ set $\CE(\g)=\hat{S}\Sigma^{-1} \g^*$, the completed symmetric algebra on $\Sigma^{-1}\g^*$.
The differential $d$ on $\CE(\g)$ is defined as
$d=d_{I}+d_{II}$; here $d_I$ is induced by the internal differential on $\g$ and
$d_{II}$ is determined by its restriction onto $\Sigma^{-1} \g^*$ which is in turn induced by the bracket map $\g\otimes \g\to \g$.	
\end{defi}
The following result holds.
\begin{prop}
The functors $\Harr:\pcCDGAlocop\rightleftarrows\DGLA:\CE$ form an adjoint pair.
\end{prop}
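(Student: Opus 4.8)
The plan is to produce, for every $A\in\pcCDGAloc$ and every $\g\in\DGLA$, a bijection
\[\Hom_{\DGLA}(\Harr(A),\g)\cong\Hom_{\pcCDGAloc}(\CE(\g),A)\]
natural in both variables; since $\Hom_{\pcCDGAlocop}(A,\CE(\g))=\Hom_{\pcCDGAloc}(\CE(\g),A)$, this is exactly the asserted adjunction with $\Harr$ as left adjoint. The strategy is to exhibit a third set to which both sides are naturally isomorphic, namely the set of Maurer--Cartan (twisting) elements of a convolution dg Lie algebra. Write $I:=I(A)$ for the augmentation ideal and let $C:=I^*$ be its dual, a conilpotent non-counital cocommutative dg coalgebra; the convolution object $\Hom(C,\g)$ carries a dg Lie algebra structure with bracket $[f,g]:=[\,\cdot\,,\cdot\,]_\g\circ(f\otimes g)\circ\Delta$ and differential assembled from $d_\g$ and $d_C$, and its set of Maurer--Cartan elements $\MC(\Hom(C,\g))$ will be the common target.

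First I would analyze the right-hand side. Since $\CE(\g)=\hat S\,\Sigma^{-1}\g^*$ is the completed free graded-commutative algebra on $\Sigma^{-1}\g^*$, a morphism $\phi\colon\CE(\g)\to A$ in $\pcCDGAloc$ is freely determined by the images of the generators, i.e.\ by a continuous graded-linear map $\Sigma^{-1}\g^*\to I$ (landing in the augmentation ideal so as to respect augmentation and locality, with completeness of $A$ guaranteeing that the resulting algebra map is well defined). Dualizing via Proposition~\ref{prop:equiv}, this is the same datum as a map $\tau\colon\Sigma^{-1}C\to\g$, equivalently a degree-one element of $\Hom(C,\g)$. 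Unwinding the requirement that $\phi$ be a chain map for $d=d_I+d_{II}$ of Definition~\ref{def:ce}, the linear part $d_I$ yields the convolution differential $d(\tau)$, while the quadratic part $d_{II}$ (dual to the bracket of $\g$), combined with the multiplication of $A$ through which $\phi$ is an algebra map (dually, the reduced coproduct of $C$), yields the term $\tfrac12[\tau,\tau]$. The chain-map condition is therefore exactly the Maurer--Cartan equation $d(\tau)+\tfrac12[\tau,\tau]=0$, so morphisms $\CE(\g)\to A$ correspond bijectively to $\MC(\Hom(C,\g))$.

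Next I would treat the left-hand side in the same spirit. Since $\Harr(A)$ is the free graded Lie algebra on $\Sigma^{-1}I^*=\Sigma^{-1}C$, a morphism of graded Lie algebras $F\colon\Harr(A)\to\g$ is freely determined by its restriction to the generators, a map $\Sigma^{-1}C\to\g$ — the very same underlying datum $\tau$. Again the chain-map condition for $d=d_I+d_{II}$ of Definition~\ref{def:har} splits: $d_I$ contributes the convolution differential $d(\tau)$, while the quadratic part $d_{II}$ (induced by the product $I\otimes I\to I$, dually the reduced coproduct $\Delta\colon C\to C\otimes C$), combined with the bracket of $\g$ through which $F$ is a Lie algebra map, contributes the same term $\tfrac12[\tau,\tau]$. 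Hence morphisms $\Harr(A)\to\g$ are also in bijection with $\MC(\Hom(C,\g))$, and composing the two identifications gives the desired bijection $\Hom_{\DGLA}(\Harr(A),\g)\cong\Hom_{\pcCDGAloc}(\CE(\g),A)$.

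Finally, naturality in $A$ and in $\g$ is routine: both identifications are built from the universal properties of the completed free commutative algebra and of the free Lie algebra together with the duality of Proposition~\ref{prop:equiv}, each of which is functorial, so the two bijections assemble into a natural isomorphism of bifunctors. I expect the essential point — and the only real obstacle — to be bookkeeping: one must check that the two differently-distributed quadratic contributions (the $\g$-bracket sitting in $d_{II}$ of $\CE(\g)$ versus the $C$-coproduct sitting in $d_{II}$ of $\Harr(A)$) really do combine, on each side, into the single convolution bracket $\tfrac12[\tau,\tau]$, getting all suspension shifts and Koszul signs to agree, and that the convolution bracket and the maps into the completed symmetric algebra are well defined in the pseudocompact setting (convergence of the relevant projective limits). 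Once this is verified the two Maurer--Cartan conditions coincide on the nose and the adjunction follows.
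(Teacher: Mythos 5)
Your proof is correct and takes essentially the same route as the paper: the paper's one-line proof simply identifies both Hom-sets with the common set $\MC(\g\otimes A)$, and your convolution dg Lie algebra $\Hom(C,\g)$ with $C=I(A)^*$ is canonically isomorphic, by pseudocompact duality, to the completed tensor product $\g\otimes I(A)$, so your intermediate Maurer--Cartan set is the same one in dual disguise. Your write-up just supplies the generators-and-differentials bookkeeping that the paper leaves to the reader (and is, if anything, slightly more careful in tensoring with the augmentation ideal $I(A)$ rather than with all of $A$).
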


\begin{proof}
We only need to notice that for $A\in\pcCDGAloc$ and $\g\in\DGLA$ there are natural isomorphisms
\[
\Hom_{\DGLA}(\Harr(A),\g) \cong \MC(\g\otimes A)\cong\Hom_{\pcCDGAloc}(\CE(\g),A). \qedhere
\]
\end{proof}	
The category $\pcCDGAloc$ has the structure of a model category.
\begin{defi}
A morphism $f:A\to B$ in $\pcCDGAloc$ is called
\begin{enumerate}
\item
a \emph{weak equivalence} if
$\Harr(f): \Harr(B)\to \Harr(A)$
is a quasi-isomorphism of dg Lie algebras;
\item
a \emph{fibration} if $f$ is surjective;
\item
a \emph{cofibration} if $f$ has the LLP with respect to all acyclic fibrations.
\end{enumerate}
\end{defi}

\begin{thm}\label{th:hinich}
The category $\pcCDGAloc$ together with the classes of fibrations, cofibrations and weak equivalences is a model category. Moreover, the adjoint pair of functors $(\Harr,\CE)$ is a Quillen equivalence between $\pcCDGAlocop$ and $\DGLA$.
\end{thm}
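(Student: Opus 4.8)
The plan is to produce the model structure on $\pcCDGAloc$ by \emph{creating} its weak equivalences through the adjunction with $\DGLA$, to run the hard part of the verification in the anti-equivalent category of coalgebras where the objects are genuinely small, and then to deduce the Quillen equivalence from a single homological computation. Throughout I would lean on the fact that $\DGLA$ already carries the model structure of Example~\ref{ex:closedmodel}(4) --- quasi-isomorphisms, surjective fibrations, every object fibrant --- together with the standard fact that this structure is cofibrantly generated.

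I would begin with the axioms. Axiom MC1 follows from the monoidal anti-equivalence of Proposition~\ref{prop:equiv} and the fundamental theorem on coalgebras (Theorem~\ref{th:fundamental}, Corollary~\ref{cor:fund}): $\pcCDGAloc$ is anti-equivalent to the cocomplete and complete category of cocommutative conilpotent dg coalgebras, so dualizing supplies all limits and colimits. Axioms MC2 and MC3 are then formal, because a weak equivalence is \emph{by definition} a map inverted by $\Harr$: quasi-isomorphisms satisfy two-out-of-three and are retract-closed, surjections are retract-closed, and the cofibrations inherit closure from their lifting characterisation.

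The real work is MC4 and MC5, and I would carry it out in the anti-equivalent coalgebra category, where the cofibrations are exactly the injections (so every object is cofibrant) and where the fundamental theorem realises each coalgebra as the filtered union of its finite-dimensional subcoalgebras. These finite-dimensional subcoalgebras are compact, which is precisely the smallness that is missing in $\pcCDGAloc$ itself and that is needed to run the small object argument; with suitable finite-dimensional generating cofibrations and acyclic cofibrations one then obtains the functorial factorizations of MC5, after which MC4(b) holds by the definition of cofibration and MC4(a) follows once the acyclic cofibrations are identified with the saturation of the generating acyclic cofibrations. Two points are delicate here: one must keep the weak equivalences equal to those created by $\Harr$ rather than the coarser underlying quasi-isomorphisms of dg vector spaces (a distinction that genuinely matters for coalgebras), and the \emph{acyclicity condition} in the small object argument --- that the generating acyclic cofibrations really are weak equivalences --- rests on the same homological input that drives the Quillen equivalence below.

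Finally, for the Quillen equivalence I would first check that $(\Harr,\CE)$ is a Quillen pair: $\Harr$ preserves weak equivalences by construction, and it carries a surjection $A\twoheadrightarrow B$ (a cofibration of $\pcCDGAlocop$) to the map of free Lie algebras induced by the split inclusion $\Sigma^{-1}I(B)^*\hookrightarrow\Sigma^{-1}I(A)^*$ dual to it, which is an almost-free extension and hence a cofibration in $\DGLA$. Because $\Harr$ moreover \emph{reflects} weak equivalences --- again built into their definition --- the standard criterion reduces the entire equivalence to one statement: that for every (automatically fibrant) dg Lie algebra $\g$ the counit $\Harr\CE(\g)\to\g$ is a quasi-isomorphism; note that since every object of the coalgebra side is cofibrant, $\CE(\g)$ needs no cofibrant replacement and this underived counit \emph{is} the derived one. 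This counit is precisely the cobar-bar resolution of $\g$ attached to the Koszul pair formed by the Lie operad and the cocommutative cooperad, and I expect establishing it to be the main obstacle of the whole theorem. I would prove it by filtering $\CE(\g)$, and hence $\Harr\CE(\g)$, by weight --- the number of dual generators --- so that the associated graded differential collapses to the pure Koszul differential of the pair; acyclicity of the resulting Koszul complex gives the claim on associated graded objects, and a convergent spectral-sequence comparison, legitimate because the filtration is complete, transports it to the counit itself. The dual statement on $\pcCDGAloc$ then follows formally from the fact that $\Harr$ reflects weak equivalences, completing the proof.
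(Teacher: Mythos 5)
The paper offers no argument of its own for this theorem --- its ``proof'' is the citation to Hinich --- so your sketch has to be measured against Hinich's actual proof. A large part of your skeleton agrees with it and is sound: defining weak equivalences as the maps inverted by $\Harr$, doing the work in the anti-equivalent coalgebra category where every object is cofibrant, checking the Quillen-pair condition by showing $\Harr$ sends surjections to semi-free extensions (one caveat: freeness as a graded Lie algebra is not enough --- you need the exhaustive filtration of the generators coming from conilpotency, i.e.\ the coradical filtration, to conclude ``cofibration''), reducing the Quillen equivalence via the standard criterion (all objects cofibrant plus $\Harr$ reflecting weak equivalences) to the single statement that the counit $\Harr\CE(\g)\to\g$ is a quasi-isomorphism, and proving that counit statement by the weight filtration whose associated graded is the acyclic Koszul complex. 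That is exactly the heart of Hinich's argument. A minor correction there: convergence of the spectral sequence comes from the weight filtration being \emph{exhaustive} on the Lie/coalgebra side (every element has finite weight), not from completeness of the filtration on $\CE(\g)$; complete decreasing filtrations do not by themselves give convergence.

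The genuine gap is your treatment of MC4/MC5. In this model structure the fibrations of coalgebras are \emph{defined} by the RLP against all acyclic cofibrations (equivalently, on the algebra side, cofibrations are defined by the LLP against all acyclic fibrations), so running the small object argument on a set $J$ only produces maps with RLP against $J$; you must then prove that every acyclic cofibration lies in the saturation of $J$, and that the members of $J$ lift against all surjections. The fundamental theorem of coalgebras does give finite-dimensional generators for the \emph{cofibrations} (injections), but the analogous claim for \emph{acyclic} cofibrations is unsupported and is precisely where the difficulty lives: if $X\hookrightarrow Y$ is an acyclic cofibration and $Y_\alpha$ runs over finite-dimensional subcoalgebras of $Y$, the intermediate injections $X\to X+Y_\alpha$ need not be weak equivalences, because weak equivalences are created by the cobar functor $\Harr$ and not by the forgetful functor, so an acyclic cofibration cannot be assembled from finite-dimensional acyclic cells. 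This cannot be absorbed into ``the same homological input'' as the counit computation. Hinich avoids the issue entirely: he obtains both factorizations by transporting them from $\DGLA$, where MC5 is known --- factor $\Harr(f)$ there, apply $\CE$, and pull back along the unit $Y\to\CE\Harr(Y)$ --- using as lemmas that the unit is an injective weak equivalence, that $\CE$ carries surjections (resp.\ surjective quasi-isomorphisms) of dg Lie algebras to fibrations (resp.\ acyclic fibrations) of coalgebras, and a base-change argument for acyclicity exploiting that every dg Lie algebra is fibrant. If you insist on a generation-style proof, you would need Smith's theorem together with an accessibility argument for the class of weak equivalences, and even then the generating acyclic cofibrations are only cardinal-bounded, not finite-dimensional; so either way your MC5 paragraph needs to be replaced.
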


\begin{proof}
See \cite{Hinich}.
\end{proof}

\begin{rem}\label{rem:rightpropercdga}
By definition, all objects in the $\pcCDGAloc$ are fibrant, so by Proposition \ref{prop:rightproper} it is right proper.
\end{rem}

The notion of the MC moduli set has a natural interpretation in terms of model structures on $\DGLA$ and $\pcCDGAloc$.

\begin{thm}\label{th:homotopy}
Let $\g$ be a dg Lie algebra and $A$ be a local pseudocompact dg algebra. Then there are the following isomorphisms, natural in both variables:
\[
 [\Harr(A),\g]\cong\MCmod(\g,A)\cong[\CE(\g),A].
\]	
\end{thm}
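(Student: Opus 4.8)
The plan is to deduce both isomorphisms from the natural adjunction isomorphism $\Hom_{\DGLA}(\Harr(A),\g)\cong\MC(\g\otimes A)\cong\Hom_{\pcCDGAloc}(\CE(\g),A)$ recalled above, by passing to homotopy classes on each side and identifying the induced equivalence relation with gauge equivalence. First I would check that both bracketed sets in the statement are genuine sets of homotopy classes of maps between a cofibrant source and a fibrant target. Every object of $\DGLA$ is fibrant (Example~\ref{ex:closedmodel}) and every object of $\pcCDGAloc$ is fibrant (Remark~\ref{rem:rightpropercdga}); equivalently, every object of $\pcCDGAlocop$ is cofibrant. Since $(\Harr,\CE)$ is a Quillen equivalence (Theorem~\ref{th:hinich}), $\Harr$ is left Quillen and $\CE$ is right Quillen, so $\Harr(A)$ is cofibrant in $\DGLA$ and $\CE(\g)$ is cofibrant in $\pcCDGAloc$. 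As the targets $\g$ and $A$ are fibrant, Theorem~\ref{th:homotopyclasses} guarantees that $[\Harr(A),\g]$ and $[\CE(\g),A]$ are well defined.

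For the first isomorphism, let $\g_A:=\g\otimes I(A)$, which is pro-nilpotent because $A$ is local and complete; the adjunction identifies $\Hom_{\DGLA}(\Harr(A),\g)$ with $\MC(\g_A)$, and $\MCmod(\g,A)$ is by definition the set of gauge-equivalence classes in $\MC(\g_A)$. To compute homotopy classes I would use the path object $\g\otimes k[t,dt]$ for $\g$, whose two structure maps are evaluation at $t=0$ and $t=1$; a right homotopy between $f,g\colon\Harr(A)\to\g$ is then a map $\Harr(A)\to\g\otimes k[t,dt]$ restricting to $f$ and $g$ at the endpoints. Applying the adjunction again, together with the identification $(\g\otimes k[t,dt])\otimes I(A)\cong\g_A\otimes k[t,dt]$, such a homotopy is precisely an element of $\MC(\g_A\otimes k[t,dt])$ whose restrictions to $t=0,1$ are the MC elements corresponding to $f$ and $g$, i.e.\ a Sullivan homotopy. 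By the Schlessinger--Stasheff theorem (Theorem~\ref{th:SS}) Sullivan homotopy coincides with gauge equivalence on $\MC(\g_A)$, so passing to homotopy classes yields $[\Harr(A),\g]\cong\MC(\g_A)/{\sim}=\MCmod(\g,A)$.

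For the second isomorphism I would invoke the Quillen equivalence of Theorem~\ref{th:hinich} directly. Since $A$ is already cofibrant in $\pcCDGAlocop$ and $\g$ is already fibrant in $\DGLA$, the derived adjunction is computed without any (co)fibrant replacement and is simply the descent to homotopy categories of the underlying adjunction isomorphism; this gives a natural bijection $[\Harr(A),\g]\cong[\CE(\g),A]$, with the right-hand bracket taken in $\pcCDGAloc$. Composing with the first isomorphism produces $\MCmod(\g,A)\cong[\CE(\g),A]$. Because every map in the chain is induced either by the natural adjunction isomorphisms or by the natural comparison with $\MC(\g_A)$, naturality in both $\g$ and $A$ is automatic. (Alternatively one could prove this second isomorphism by hand using a cylinder object for $\CE(\g)$ in $\pcCDGAloc$, but the pseudocompact bookkeeping makes the Quillen-equivalence route preferable.)

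The main obstacle is the middle step: verifying that $\g\otimes k[t,dt]$ genuinely is a path object in $\DGLA$ --- so that right homotopy is the correct relation --- and then tracking the adjunction carefully enough to see that a right homotopy of dg Lie algebra maps corresponds exactly to a Sullivan homotopy of the associated MC elements. One must also ensure that $\g_A=\g\otimes I(A)$ is pro-nilpotent, since this is precisely the hypothesis under which Theorem~\ref{th:SS} applies; this is where the locality and completeness of $A$, and the use of the appropriately completed tensor products, enter.
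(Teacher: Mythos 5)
Your proposal is correct and follows essentially the same route as the paper's proof: the point-set adjunction together with the path object $\g\otimes k[t,dt]$ and the pro-nilpotent Schlessinger--Stasheff theorem identify $[\Harr(A),\g]$ with $\MCmod(\g,A)$, while the derived adjunction coming from the Quillen equivalence $(\Harr,\CE)$ gives $[\Harr(A),\g]\cong[\CE(\g),A]$. Your extra care about cofibrancy of $\Harr(A)$ and $\CE(\g)$, and about tensoring with $I(A)$ rather than $A$, only makes explicit what the paper leaves implicit.
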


\begin{proof}
The bijection $[\Harr(A),\g]\cong[\CE(\g),A]$ follows from the adjunction $(\Harr,\CE)$ on the level of homotopy categories. Since $\Harr(A)$ is a cofibrant dg Lie algebra, $[\Harr(A),\g]$ can be identified with Sullivan homotopy classes of maps $\Harr(A)\to \g$ (choosing $\g[t,dt]$ as a path object for $\g$) and the latter set can be identified, by Theorem \ref{th:SS} with $\MC(\g\otimes A)$ modulo gauge equivalence, i.e.~with $\MCmod(\g,A)$. Note that $\g\otimes A$ may not be nilpotent, so we need a pro-nilpotent version of Theorem \ref{th:SS}.
\end{proof}		
\begin{rem}
A weak equivalence in $\pcCDGAloc$ is \emph{not} the same as a quasi-isomorphism. Indeed, let $\g$ be the ordinary Lie algebra $\sll_2(k)$. It is well-known that the Chevalley-Eilenberg cohomology of $\sll_2(k)$ is $\Lambda(x)$, the exterior algebra on one generator $x$ in degree $3$ and it follows that $\CE(\g)$ is formal, i.e.~quasi-isomorphic to its own cohomology. However, $\CE(\g)$ is not weakly equivalent to $\Lambda(x)$, for if it were, then the dg Lie algebra $\Harr(\CE(\g))$ would be on the one hand, quasi-isomorphic to $\g$ by Theorem \ref{th:hinich}, and on the other, to $\Harr(\Lambda(x))$. But $\Harr(\Lambda(x))$ is isomorphic to the abelian Lie algebra with one basis vector in degree $2$ and it is, of course, not quasi-isomorphic to $\g=\sll_2(k)$. In fact, a weak equivalence in $\pcCDGAloc$ is that of a \emph{filtered} quasi-isomorphism and it is finer that a quasi-isomorphism: every weak equivalence of local commutative pseudocompact dg algebras is a quasi-isomorphism but not vice-versa.
\end{rem}	
\begin{prop}\label{prop:compact}
The category $\pcCDGAlocop$ is compactly generated.
\end{prop}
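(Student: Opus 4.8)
The plan is to produce an explicit set of compact cofibrant generators by transporting one across the Koszul-duality Quillen equivalence $(\Harr,\CE)$ of Theorem \ref{th:hinich}. I would first do the Lie side, where things are transparent. For $n\in\Z$ let $S^n$ be the one-dimensional dg vector space concentrated in degree $n$ and let $L(S^n)$ denote the free dg Lie algebra on it. Since $\Hom_{\DGLA}(L(S^n),\g)\cong\Hom_{\DGVect}(S^n,\g)$ is the space of degree-$n$ cocycles of $\g$, and (using $\g[t,\dM t]$ as a path object) two such cocycles are homotopic exactly when they are cohomologous, one obtains a natural identification $[L(S^n),\g]\cong H^n(\g)$. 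Over a field every object of $\DGVect$ is cofibrant, so $L(S^n)$ is cofibrant in $\DGLA$, while every dg Lie algebra is fibrant by Example \ref{ex:closedmodel}(4). Filtered colimits in $\DGLA$ are computed in $\DGVect$ and are exact, so $H^n(-)\cong[L(S^n),-]$ commutes with them; thus each $L(S^n)$ is compact in Jardine's sense. Finally a map $g$ of dg Lie algebras is a quasi-isomorphism iff $H^n(g)$ is bijective for all $n$, i.e.\ iff $[L(S^n),g]$ is bijective for all $n$. Hence $\{L(S^n):n\in\Z\}$ is a set of compact cofibrant objects detecting weak equivalences, so $\DGLA$ is compactly generated.

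Next I would claim that $S:=\{\CE(L(S^n)):n\in\Z\}$ witnesses compact generation of $\pcCDGAlocop$. Each such object is cofibrant: by Remark \ref{rem:rightpropercdga} every object of $\pcCDGAloc$ is fibrant, hence every object of $\pcCDGAlocop$ is cofibrant, so no replacement is needed. Because all objects of $\pcCDGAlocop$ are cofibrant and all dg Lie algebras are fibrant, the derived equivalence $L\Harr:\ho\pcCDGAlocop\xrightarrow{\sim}\ho\DGLA$ of Theorem \ref{th:hinich} agrees with $\Harr$ on these objects, its derived counit gives $\Harr(\CE(L(S^n)))\simeq L(S^n)$, and full faithfulness yields natural bijections $[A,X]\cong[\Harr(A),\Harr(X)]$ for all $A,X\in\pcCDGAlocop$.

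Detection now transports formally. A map $\phi$ in $\pcCDGAlocop$ is a weak equivalence iff $\Harr(\phi)$ is a quasi-isomorphism, iff $[L(S^n),\Harr(\phi)]$ is bijective for all $n$, iff $[\CE(L(S^n)),\phi]$ is bijective for all $n$, where the last equivalence uses the bijection $[\CE(L(S^n)),\phi]\cong[\Harr(\CE(L(S^n))),\Harr(\phi)]\cong[L(S^n),\Harr(\phi)]$ together with $\Harr(\CE(L(S^n)))\simeq L(S^n)$. For compactness, let $X_0\to X_1\to\cdots$ be a filtered system of cofibrations in $\pcCDGAlocop$. As a left adjoint preserving cofibrations, $\Harr$ preserves the colimit and sends the system to a filtered system of cofibrations between cofibrant dg Lie algebras; combining $[\CE(L(S^n)),-]\cong[L(S^n),\Harr(-)]$ with compactness of $L(S^n)$ and $\Harr(\colim_iX_i)\cong\colim_i\Harr(X_i)$ shows that $[\CE(L(S^n)),-]$ commutes with the colimit, so each $\CE(L(S^n))$ is compact.

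The main obstacle is exactly this last point. The generators $\CE(L(S^n))$ are infinite-dimensional as pseudocompact dg algebras — for instance $\CE$ of the free Lie algebra on an even-degree generator is a power-series algebra — so their compactness is invisible on the nose and must be obtained homotopically. The two facts that make the transfer go through are that $\Harr$ is a left adjoint, hence preserves the filtered colimits of cofibrations used to test compactness, and that every object of $\pcCDGAlocop$ is cofibrant, so the equivalence $L\Harr$ coincides with $\Harr$ on all the objects in play and no derived corrections intervene.
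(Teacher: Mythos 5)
Your proof is correct and takes essentially the same route as the paper: compact generation is transported from $\DGLA$, where the free dg Lie algebras on a single generator are compact generators, across the Koszul duality Quillen equivalence $(\Harr,\CE)$ of Theorem \ref{th:hinich}. The only (immaterial) difference is the choice of representatives for the generators: the paper uses the finite-dimensional algebras $X_n=k\oplus\Sigma^n k$, whose images under $\Harr$ are literally the free Lie algebras, whereas you use the weakly equivalent objects $\CE(L(S^n))$ coming from the other side of the duality, and you spell out the detection and compactness-transfer steps that the paper leaves implicit.
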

\begin{proof}
Let us denote by $X_n, n\in \mathbb{Z}$ the commutative algebra $k \oplus \Sigma^n k$ where $\Sigma^n k$ has zero multiplication. We claim that the set $\{X_n, n\in \mathbb{Z}\}$ forms a set of compact generators for $\pcCDGA^{\op}$. To see that note that under the Quillen equivalence of Theorem \ref{th:hinich}, the algebra $X_n$ corresponds to the free Lie algebra on one generator in degree $n-1$. These free Lie algebras clearly form a set of compact generators for dg Lie algebras so the conclusion follows.
\end{proof}	

\subsection{Quillen equivalence between $\DGAa$ and $\pcDGAlocop$}

We now explain an associative analogue of the picture of Koszul duality from the previous section, where $\pcCDGAlocop$ is replaced with $\pcDGAloc$ (i.e.~the commutativity is dropped) and $\DGLA$ is replaced with $\DGAa$, the category of \emph{augmented} dg algebras. cf.~\cite{Positselski}. Here we do not insist that the ground field $k$ has characteristic zero.


Any local augmented pseudocompact dg algebra $A$ with augmentation ideal $I(A)$ determines an augmented dg algebra as follows.

\begin{defi}\label{def:cobar}
For $A \in \pcDGAloc$ set $\Cobar(A) = T\Sigma^{-1} I(A)^*$, the uncompleted tensor algebra on the discrete vector space $\Sigma^{-1}I(A)^*$.
The differential $d$ on $\Cobar(A)$ is defined as $d=d_{I}+d_{II}$; here $d_I$ is induced by the internal differential on $I(A)$ and $d_{II}$ is determined by its restriction onto $\Sigma^{-1} I(A)^*$ which is in turn induced by the product map $I(A)\otimes I(A)\to I(A)$.
\end{defi}

Similarly, any augmented dg algebra $\g$ with augmentation ideal $\I(\g)$ determines a local pseudocompact dg algebra as follows.

\begin{defi}\label{def:bar}
For $\g \in \DGAa$ set $\Bar(\g)=\hat{T}\Sigma^{-1} \I(\g)^*$, the completed tensor algebra on $\Sigma^{-1}\I(\g)^*$.
The differential $d$ on $\Bar(\g)$ is defined as
$d=d_{I}+d_{II}$; here $d_{I}$ is induced by the internal differential on $\g$ and
$d_{II}$ is determined by its restriction onto $\Sigma^{-1} \I(\g)^*$ which is in turn induced by the product map $\I(\g)\otimes \I(\g)\to \I(\g)$.	
\end{defi}

\begin{rem}
The construction $\Bar(\g)$ is commonly referred to as the bar-construction of the dg algebra $\g$; its cohomology computes $\operatorname{Ext}_{\g}(k,k)$. Similarly, $\Cobar(A)$ is the cobar-construction of the pseudocompact dg algebra $A$ (or its dual dg coalgebra).
\end{rem}

The following result holds.
\begin{prop}\label{prop:assocadj}
The functors $\Cobar: \pcDGAlocop \leftrightarrows \DGAa :\Bar$ form an adjoint pair.
\end{prop}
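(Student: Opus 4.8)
The plan is to mirror the proof of the adjunction $(\Harr,\CE)$ given above, producing a natural chain of bijections
\[
\Hom_{\DGAa}(\Cobar(A),\g)\;\cong\;\MC(\g\otimes A)\;\cong\;\Hom_{\pcDGAloc}(\Bar(\g),A)
\]
for $A\in\pcDGAloc$ and $\g\in\DGAa$, where the middle term is now the \emph{associative} Maurer--Cartan set of Definition~\ref{defi:MCass}, taken in the pro-nilpotent non-unital dg algebra $\I(\g)\otimes I(A)$. Since $\Hom_{\pcDGAlocop}(A,\Bar(\g))=\Hom_{\pcDGAloc}(\Bar(\g),A)$, the composite bijection is exactly the adjunction isomorphism exhibiting $\Cobar$ as left adjoint to $\Bar$. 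Both bijections will be manifestly natural in $A$ and $\g$, so all the content lies in constructing them.

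First I would unwind the left-hand bijection via freeness of the tensor algebra. By construction $\Cobar(A)=T\Sigma^{-1}I(A)^*$ is free as a graded algebra on the generating space $V:=\Sigma^{-1}I(A)^*$, so an \emph{augmented} graded-algebra map $\Cobar(A)\to\g$ is the same datum as a degree-preserving linear map $V\to\I(\g)$ into the augmentation ideal. As $I(A)$ is pseudocompact, dualizing identifies such a map with an element $x\in\I(\g)\otimes I(A)$, and the twist by $\Sigma^{-1}$ on $V$ places $x$ in total degree $1$, the Maurer--Cartan degree. The remaining requirement that the map commute with the differential splits according to $d=d_I+d_{II}$: the $d_I$ part reproduces the internal term $d(x)$, while the $d_{II}$ part, which on generators is the transpose of the multiplication $I(A)\otimes I(A)\to I(A)$, reproduces the quadratic term $x^2$. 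Hence the chain-map condition is exactly $d(x)+x^2=0$, and we recover $\MC(\g\otimes A)$.

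The right-hand bijection is handled dually, using that $\Bar(\g)=\hat T\Sigma^{-1}\I(\g)^*$ is the \emph{completed} free pseudocompact algebra on $W:=\Sigma^{-1}\I(\g)^*$. By the universal property of the completed tensor algebra, a morphism $\Bar(\g)\to A$ in $\pcDGAloc$ is determined by a continuous linear map of generators $W\to I(A)$, which lands in the augmentation ideal automatically by locality and corresponds once more to a degree-$1$ element of $\I(\g)\otimes I(A)$. Compatibility with the differential, with $d_{II}$ now the transpose of the multiplication $\I(\g)\otimes\I(\g)\to\I(\g)$, again amounts to the Maurer--Cartan equation. Composing the two bijections yields the adjunction.

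The main obstacle I anticipate is largely bookkeeping but genuinely delicate: matching the twisted part $d_{II}$ of the bar and cobar differentials with the quadratic term $x^2$ of the associative Maurer--Cartan equation, while tracking the suspensions $\Sigma^{-1}$, the linear dualities $(-)^*$, and the pseudocompact topology throughout. In particular one must verify that the universal property of the completed tensor algebra holds in the continuous setting --- that a morphism out of $\hat T W$ is freely and continuously determined by its restriction to generators, using pro-nilpotence of $I(A)$ to guarantee convergence --- and that the sign conventions introduced by the suspension are consistent on the two sides, so that one and the same element $x\in(\I(\g)\otimes I(A))^1$ indexes both Hom-sets. Once these identifications are pinned down, naturality and the adjunction follow formally.
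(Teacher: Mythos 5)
Your proposal is correct and follows exactly the paper's argument: the paper's proof consists precisely of observing the natural isomorphisms $\Hom_{\DGAa}(\Cobar(A),\g)\cong\MC(\I(\g)\otimes A)\cong\Hom_{\pcDGAloc}(\Bar(\g),A)$, which you establish in detail. Your elaboration of the freeness/completed-freeness arguments and the matching of $d_{II}$ with the quadratic term of the Maurer--Cartan equation is exactly the unwinding the paper leaves implicit.
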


\begin{proof}
We only need to notice that for $A\in\pcDGAloc$ and $\g\in\DGAa$ there are natural isomorphisms
\[
\Hom_{\DGAa}(\Cobar(A),\g) \cong \MC(\I(\g)\otimes A) \cong \Hom_{\pcDGAloc}(\Bar(\g),A). \qedhere
\]
\end{proof}	
The category $\pcDGAloc$ has the structure of a model category.
\begin{defi}
A morphism $f:A\to B$ in $\pcDGAloc$ is called
\begin{enumerate}
\item
a \emph{weak equivalence} if
$\Cobar(f): \Cobar(B)\to \Cobar(A)$
is a quasi-isomorphism of dg algebras;
\item
a \emph{fibration} if $f$ is surjective;
\item
a \emph{cofibration} if $f$ has the LLP with respect to all acyclic fibrations.
\end{enumerate}
\end{defi}

\begin{thm}\label{th:positselski}
The category $\pcDGAloc$ together with the classes of fibrations, cofibrations and weak equivalences is a model category. Moreover, the adjoint pair of functors $(\Cobar,\Bar)$ is a Quillen equivalence between $\pcDGAlocop$ and $\DGAa$.
\end{thm}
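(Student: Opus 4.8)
The plan is to mirror Hinich's proof of the commutative statement (Theorem~\ref{th:hinich}), transporting every step to the associative setting; this is essentially the content of \cite{Positselski}, so I will only outline the structure. It is convenient to pass to the anti-equivalent category of conilpotent coassociative dg coalgebras (Corollary~\ref{cor:fund}), under which $\pcDGAlocop$ becomes the category of such coalgebras, $\Cobar$ and $\Bar$ become the classical cobar and bar functors, the fibrations of $\pcDGAloc$ (the surjections) become \emph{injections} of coalgebras, and a weak equivalence becomes a map $f$ for which $\Cobar(f)$ is a quasi-isomorphism. In this description the cofibrations are the injective maps, which is what makes the lifting and factorization axioms manageable. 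The adjunction $(\Cobar,\Bar)$ is already in hand from Proposition~\ref{prop:assocadj}.

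First I would dispose of the formal axioms. Bicompleteness (MC1) holds since conilpotent dg coalgebras admit all colimits and, via the anti-equivalence, the limits computed among pseudocompact algebras; two-out-of-three and closure under retracts (MC2, MC3) are immediate because the weak equivalences are created by the functor $\Cobar$ out of the quasi-isomorphisms of $\DGAa$ (Example~\ref{ex:closedmodel}), which satisfy these axioms. The content lies in the factorization and lifting axioms (MC4, MC5). For these I would run the small object argument with explicit sets of generating cofibrations and generating acyclic cofibrations, built from the (co)cells $k\oplus\Sigma^nk$ and their contractible analogues already used in Proposition~\ref{prop:compact}; the required smallness is automatic because every conilpotent dg coalgebra is the filtered union of its finite-dimensional subcoalgebras (Theorem~\ref{th:fundamental}).

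The crux---and the step I expect to be hardest---is the acyclicity input that forces the two classes of cofibrations to interact correctly with fibrations and pins down the weak equivalences. Concretely one must show that the counit $\Cobar\Bar(\g)\to\g$ is a quasi-isomorphism for every $\g\in\DGAa$. This is the classical assertion that the cobar-bar resolution of an augmented dg algebra is acyclic, and I would prove it by the standard twisted-tensor-product argument: exhibit an explicit contracting homotopy, or equivalently run the spectral sequence of the length filtration on the tensor algebra. Crucially, unlike the commutative Harrison/Chevalley--Eilenberg computation underlying Theorem~\ref{th:hinich}, this argument is purely associative and uses no hypothesis on the characteristic of $k$. The dual unit statement---that the unit of the adjunction is a weak equivalence on every object---then follows formally: applying $\Cobar$ to the unit $A\to\Bar\Cobar(A)$ and invoking the triangle identity reduces it, via two-out-of-three, to the counit on $\Cobar(A)$, which we have just shown is a quasi-isomorphism.

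These two facts simultaneously complete the verification of the model axioms and establish the Quillen equivalence. Indeed $\Cobar$ sends an injection of coalgebras to a free extension of dg algebras, hence a cofibration, and by definition sends weak equivalences to quasi-isomorphisms; therefore $(\Cobar,\Bar)$ is a Quillen adjunction with $\Cobar$ left Quillen. Since its derived unit and counit are weak equivalences by the previous paragraph, the induced adjunction on homotopy categories is an equivalence, which is precisely the assertion that $(\Cobar,\Bar)$ is a Quillen equivalence between $\pcDGAlocop$ and $\DGAa$.
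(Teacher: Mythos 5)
The paper's own ``proof'' of this theorem is nothing more than the citation to \cite{Positselski}, and your outline is a faithful sketch of the argument found there (and of Hinich's commutative prototype, Theorem~\ref{th:hinich}): dualize to conilpotent dg coalgebras where cofibrations become injections, reduce the formal axioms to filtration/smallness arguments, feed in the characteristic-free bar-cobar resolution $\Cobar\Bar(\g)\to\g$ as the key acyclicity input, and deduce the unit statement and the Quillen equivalence formally from the adjunction. So your proposal is correct and takes essentially the same approach as the paper -- it simply spells out what the paper delegates wholesale to the reference.
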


\begin{proof}
See \cite{Positselski}.
\end{proof}

\begin{rem}\label{rem:rightproperdga}
By definition, all objects in the $\pcDGAloc$ are fibrant, so by Proposition \ref{prop:rightproper} it is right proper.
\end{rem}

\begin{thm}\label{th:assochomotopy}
There are the following isomorphisms, natural in both variables:
\[
[\Cobar(A),\g]_{\DGAa}\cong\MCmod(\I(\g),A) \cong[\Bar(\g),A]_{\pcDGAloc}.
\]	
\end{thm}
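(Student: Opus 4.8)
The plan is to run the proof of Theorem~\ref{th:homotopy} essentially verbatim, replacing the commutative Koszul duality by its associative counterpart: $\Harr,\CE$ become $\Cobar,\Bar$, the dg Lie Schlessinger--Stasheff theorem becomes its associative version Theorem~\ref{thm:SSass}, and the path object $\g[t,dt]$ is replaced by $\g\otimes\Int$. Thus there are two bijections to produce, and the content of the statement is that they can be glued through the common middle term $\MCmod(\I(\g),A)$.

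First I would obtain the outer bijection $[\Cobar(A),\g]_{\DGAa}\cong[\Bar(\g),A]_{\pcDGAloc}$ directly from the adjunction $(\Cobar,\Bar)$ on the level of homotopy categories, which is legitimate by the Quillen equivalence of Theorem~\ref{th:positselski}: the derived adjunction gives a natural bijection $[\Cobar(A),\g]_{\DGAa}\cong[A,\Bar(\g)]_{\pcDGAlocop}$, and maps $A\to\Bar(\g)$ in $\pcDGAlocop$ are the same as maps $\Bar(\g)\to A$ in $\pcDGAloc$.

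Next I would identify $[\Cobar(A),\g]_{\DGAa}$ with $\MCmod(\I(\g),A)$. Note first that $\Cobar(A)$ is cofibrant: every object of $\pcDGAloc$ is fibrant (Remark~\ref{rem:rightproperdga}), so every object of $\pcDGAlocop$ is cofibrant, and the left Quillen functor $\Cobar$ preserves cofibrancy; on the other side $\g$ is fibrant since all objects of $\DGAa$ are fibrant (Example~\ref{ex:closedmodel}(4)). Hence by Theorem~\ref{th:homotopyclasses} the set $[\Cobar(A),\g]_{\DGAa}$ is well-defined and left and right homotopy coincide. The adjunction of Proposition~\ref{prop:assocadj} supplies a natural bijection $\Hom_{\DGAa}(\Cobar(A),\g)\cong\MC(\I(\g)\otimes A)$, so it remains to match the equivalence relations. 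For this I would take $\I(\g)\otimes\Int$ as a path object for the non-unital algebra $\I(\g)$ (as recorded before Definition~\ref{def:homass}), with endpoint evaluations induced by $p_1,p_2\colon\Int\to k$. A right homotopy between two maps $\Cobar(A)\to\g$ is then a map into this path object; applying Proposition~\ref{prop:assocadj} together with $(\I(\g)\otimes\Int)\otimes A\cong(\I(\g)\otimes A)\otimes\Int$ turns it into an element $z\in\MC\bigl((\I(\g)\otimes A)\otimes\Int\bigr)$ whose evaluations $(1\otimes p_1)(z)$ and $(1\otimes p_2)(z)$ are exactly the MC elements attached to the two maps, which is precisely the homotopy of MC elements of Definition~\ref{def:homass}. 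By Theorem~\ref{thm:SSass} this coincides with gauge equivalence, so passing to classes yields $[\Cobar(A),\g]_{\DGAa}\cong\MC(\I(\g)\otimes A)/{\sim}=\MCmod(\I(\g),A)$, and naturality in both variables is inherited from that of the adjunction and of the path-object construction.

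The main obstacle is the bookkeeping in this last step: one must verify that $\I(\g)\otimes\Int$ genuinely satisfies the path-object axioms, i.e.~the inclusion $\I(\g)\hookrightarrow\I(\g)\otimes\Int$ induced by the unit of $\Int$ is a weak equivalence because $\Int$ is acyclic, while the evaluation $\I(\g)\otimes\Int\to\I(\g)\times\I(\g)$ is surjective and hence a fibration, and one must check that under the adjunction the two endpoint evaluations correspond on the nose to $1\otimes p_1$ and $1\otimes p_2$ on MC elements. A secondary point, exactly as in the dg Lie case, is that $\I(\g)\otimes A$ need not be nilpotent; since $A$ is a local pseudocompact dg algebra its maximal ideal is pro-nilpotent, which makes $\I(\g)\otimes A$ pro-nilpotent and licenses the pro-nilpotent extension of Theorem~\ref{thm:SSass}.
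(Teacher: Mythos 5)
Your proposal is correct and takes essentially the same route as the paper: the paper's proof also runs the argument of Theorem~\ref{th:homotopy} verbatim with $(\Harr,\CE)$ replaced by $(\Cobar,\Bar)$, using the smaller path object $\I(\g)\otimes\Int$ and Theorem~\ref{thm:SSass} in place of Theorem~\ref{th:SS} to identify homotopy classes of maps $\Cobar(A)\to\g$ with $\MCmod(\I(\g),A)$. The extra verifications you spell out (cofibrancy of $\Cobar(A)$, the path-object axioms for $\I(\g)\otimes\Int$, and pro-nilpotence of $\I(\g)\otimes A$) are details the paper leaves implicit.
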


\begin{proof}
The proof is the same as that of Theorem~\ref{th:homotopy} with $\Harr(A)$ and $\CE(\g)$ replaced by $\Cobar(A)$ and $\Bar(\g)$ respectively. The only difference is that we choose the smaller path object $\I(\g) \otimes \Int$ for $\I(\g) $ and apply Theorem~\ref{thm:SSass} to identify homotopy classes of maps $\Cobar(A) \to \g$ with $\MCmod(\I(\g),A)$.
\end{proof}		
\begin{rem}
	A weak equivalence in $\pcDGAloc$ is \emph{not} the same as a quasi-isomorphism. Indeed, let $\g$ be ordinary associative algebra $k\times k$, the product of two copies of $k$. Then $\Bar(\g)$ is easily seen to be the dual to the bar-resolution of the algebra $k$, in particular it is quasi-isomorphic to $k$. If it were weakly equivalent to $k$ in $\pcDGAloc$ then $\Cobar(\Bar(\g))$ would be, on the one hand, quasi-isomorphic to $\g\cong k\times k$ and, on the other, to $\Cobar(k)\cong k$ giving a contradiction.  In fact, a weak equivalence in $\pcDGAloc$ is that of a filtered quasi-isomorphism and it is finer that a quasi-isomorphism: every weak equivalence of local
	 pseudocompact dg algebras is a quasi-isomorphism but not vice-versa.
\end{rem}
\begin{prop}\label{prop:assoccompact}
The category $\pcDGAlocop$ is compactly generated.
\end{prop}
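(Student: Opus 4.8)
The plan is to follow the proof of Proposition~\ref{prop:compact} verbatim, replacing the Koszul duality of Theorem~\ref{th:hinich} by its associative counterpart in Theorem~\ref{th:positselski}, and free Lie algebras by free associative algebras. For each $n\in\mathbb Z$, let $Y_n:=k\oplus\Sigma^n k$ denote the square-zero extension, regarded as an object of $\pcDGAloc$: it is finite-dimensional and hence pseudocompact, it is augmented with augmentation ideal $I(Y_n)=\Sigma^n k$, and it is local since this nilpotent ideal is its unique maximal ideal. I claim that $\{Y_n:n\in\mathbb Z\}$ is a set of compact generators for $\pcDGAlocop$.

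First I would compute the Koszul duals of the $Y_n$. By Definition~\ref{def:cobar} we have $\Cobar(Y_n)=T\Sigma^{-1}I(Y_n)^{*}$, the free augmented associative algebra on the one-dimensional space $\Sigma^{-1}(\Sigma^n k)^{*}$; its differential vanishes because both the internal differential on $I(Y_n)=\Sigma^n k$ and the multiplication on it are zero. Thus $\Cobar(Y_n)$ is precisely the free associative algebra on a single generator in degree $n-1$, equipped with zero differential. Under the Quillen equivalence $(\Cobar,\Bar)$ of Theorem~\ref{th:positselski} the objects $Y_n$ therefore correspond exactly to these free algebras, which are cofibrant in $\DGAa$.

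It then remains to verify that the free associative algebras on one generator form a set of compact generators for $\DGAa$, just as the free Lie algebras do for $\DGLA$ in the commutative case. For the generation property, a map out of the free algebra on a degree-$d$ generator into a (necessarily fibrant) object $\g\in\DGAa$ is determined by a degree-$d$ cocycle of $\I(\g)$, and passing to homotopy classes identifies $[\Cobar(Y_n),\g]$ with $H^{n-1}(\I(\g))$; hence a morphism $\g\to\h$ induces bijections on all of these if and only if it is a quasi-isomorphism, i.e.\ a weak equivalence. Compactness follows because a free algebra is cofibrant and mapping out of it commutes with filtered colimits. Since an equivalence of homotopy categories preserves both compactness and the property of detecting weak equivalences, transporting along $(\Cobar,\Bar)$ shows that $\{Y_n\}$ is a set of compact generators for $\pcDGAlocop$.

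I expect the only real work to lie in the identification $[\Cobar(Y_n),\g]\cong H^{n-1}(\I(\g))$ and in confirming genuine compactness: one must check that the free algebra is cofibrant in $\DGAa$, pick a convenient path object (the object $\I(\g)\otimes\Int$ used in Theorem~\ref{th:assochomotopy} is natural here), and verify that homotopy classes of maps out of it commute with filtered colimits. Everything else is formal transport along the Quillen equivalence and runs exactly parallel to the commutative argument.
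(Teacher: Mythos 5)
Your proposal is correct and takes essentially the same route as the paper: the paper's proof of Proposition~\ref{prop:assoccompact} is precisely the argument of Proposition~\ref{prop:compact} with the square-zero extensions $k\oplus\Sigma^n k$ transported along the Quillen equivalence of Theorem~\ref{th:positselski} to the free associative algebras on one generator in degree $n-1$, exactly as you do. The only difference is that you spell out what the paper dismisses as clear, namely the identification $[\Cobar(Y_n),\g]\cong H^{n-1}(\I(\g))$, the cofibrancy of the free algebras, and the filtered-colimit verification of compactness.
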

\begin{proof}
	The argument is the same as in Proposition \ref{prop:compact}, using Theorem \ref{th:positselski} in place of Theorem \ref{th:hinich}.
\end{proof}	
\subsection{Relationship between two types of Koszul duality} We will now discuss how the associative Koszul duality is related to the Lie-commutative one.

Given a dg Lie algebra $\g$, its universal enveloping algebra $U\g$ is a dg algebra; this determines a functor $\DGLA\to\DGAa$ that is left adjoint to the functor $\Lie$ taking an associative augmented dg algebra to the commutator dg Lie algebra of its augmentation ideal.  Similarly the forgetful functor $\As:\pcCDGAloc\to \pcDGAloc$ is right adjoint to the \emph{abelianization} functor $\Ab:\pcDGAloc\to \pcCDGAloc $, associating to an associative pseudocompact dg algebra $\g$ its quotient by the ideal topologically generated by (graded) commutators in $\g$. It is clear that both are in fact Quillen adjunctions.

\begin{prop}\label{prop:comass}
	The following diagrams of model categories and Quillen functors between them is commutative in the sense that there is a functor isomorphism $U\circ\Harr\cong \Cobar\circ \As$ and  $\CE\circ\Lie\cong \Ab\circ\Bar$.
	\[
\xymatrix{
\DGAa&\DGLA\ar_U[l]&\DGAa\ar_{\Bar}[d]\ar^{\Lie}[r]&\DGLA\ar^{\CE}[d]\\
\pcDGAlocop\ar^{\Cobar}[u]&
\pcCDGAlocop\ar_{\Harr}[u]\ar_{\As}[l]&\pcDGAlocop\ar^{\Ab}[r]&\pcCDGAlocop
}	
	\]
	\end{prop}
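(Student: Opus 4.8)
The plan is to prove the left-hand isomorphism $U\circ\Harr\cong\Cobar\circ\As$ directly, and then to obtain the right-hand isomorphism $\CE\circ\Lie\cong\Ab\circ\Bar$ formally, as its right-adjoint dual. The point is that all four functors $\Harr$, $U$, $\Cobar$ and $\As$ are left adjoints: $\Harr\dashv\CE$ and $U\dashv\Lie$ by the adjunctions recorded just above, $\Cobar\dashv\Bar$ by Proposition~\ref{prop:assocadj}, and $\As\colon\pcCDGAlocop\to\pcDGAlocop$ is left adjoint to $\Ab\colon\pcDGAlocop\to\pcCDGAlocop$ because the stated adjunction $\Ab\dashv\As$ on $\pcDGAloc,\pcCDGAloc$ reverses to $\As\dashv\Ab$ on the opposite categories. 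Hence both $U\circ\Harr$ and $\Cobar\circ\As$ are left adjoints $\pcCDGAlocop\to\DGAa$, whose right adjoints are $(U\Harr)^{R}=\CE\circ\Lie$ and $(\Cobar\,\As)^{R}=\As^{R}\circ\Cobar^{R}=\Ab\circ\Bar$ respectively. Since a natural isomorphism of left adjoints induces a canonical natural isomorphism of their right adjoints (right adjoints being unique up to unique natural isomorphism), the first isomorphism immediately yields the second.

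First I would establish the left isomorphism. Fix $A\in\pcCDGAloc$ and write $V:=\Sigma^{-1}I(A)^{*}$. By Definition~\ref{def:har}, $\Harr(A)$ is the free graded Lie algebra $L(V)$ with differential $d=d_{I}+d_{II}$, while by Definition~\ref{def:cobar}, $\Cobar(\As A)$ is the tensor algebra $T(V)$ with a differential of the same shape (the product of $\As A$ being literally that of $A$). The underlying graded algebras agree because the universal enveloping algebra of a free Lie algebra is the tensor algebra on the same generators, $U\,L(V)\cong T(V)$; this identifies $U\Harr(A)$ with $\Cobar(\As A)$ as graded algebras, naturally in $A$. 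It remains to match differentials. Both $U(d_{II})$ and the cobar differential are derivations of $T(V)$, so it suffices to compare them on the generators $V$; the internal parts $d_{I}$ agree tautologically, both being induced by the differential of $A$.

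On a generator, the cobar differential is a map $\delta\colon V\to V\otimes V\subset T(V)$ obtained from the dual $\mu^{*}\colon I(A)^{*}\to I(A)^{*}\otimes I(A)^{*}$ of the product $\mu$ by (de)suspension. The key computation is that conjugating the graded swap by a single suspension reverses its sign, so that $\tau_{V}\circ(\Sigma^{-1}\otimes\Sigma^{-1})=-(\Sigma^{-1}\otimes\Sigma^{-1})\circ\tau_{W}$ with $W=I(A)^{*}$. Since $\mu$ is graded commutative, $\mu^{*}$ is graded cocommutative ($\tau_{W}\mu^{*}=\mu^{*}$), and therefore $\delta$ is graded \emph{anti}symmetric, $\tau_{V}\delta=-\delta$. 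Thus $\delta(v)$ already lies in the antisymmetric part of $V\otimes V$, which is exactly the image under $U$ of the weight-two component $L(V)_{2}\cong\Lambda^{2}V$, embedded by $a\wedge b\mapsto a\otimes b-(-1)^{|a||b|}b\otimes a$; and the Harrison differential on $V$, being the same suspended dual product landing in $L(V)_{2}$, is carried by $U$ precisely to $\delta$. Hence $U(d_{II})$ and the cobar differential coincide on $V$, so everywhere, giving $U\Harr(A)\cong\Cobar(\As A)$ as dg algebras, naturally in $A$.

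The hard part will be the sign bookkeeping of the previous paragraph: with a fixed suspension convention one must verify that the desuspended dual of a graded commutative product is genuinely antisymmetric, and that an antisymmetric tensor equals its image under $\Lambda^{2}V\hookrightarrow V\otimes V$. Everything else is formal, namely the identification $U\,L(V)\cong T(V)$, the derivation property, and the passage to right adjoints yielding $\CE\circ\Lie\cong\Ab\circ\Bar$. As a sanity check, the second isomorphism can also be seen directly: $\Ab\Bar(\g)$ and $\CE\Lie(\g)$ share the graded algebra $\hat{S}\Sigma^{-1}\I(\g)^{*}$, since abelianizing $\hat{T}$ gives $\hat{S}$, and the bar differential pushed to the commutative quotient becomes the symmetrization of the dual product, which after suspension is exactly the dual of the commutator bracket defining $\CE\circ\Lie$.
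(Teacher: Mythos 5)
Your proposal is correct, and it genuinely fills in what the paper dismisses in one line as ``straightforward unravelling of the definitions.'' Your first half is that unravelling done carefully: $U L(V)\cong T(V)$ identifies the underlying graded algebras of $U\Harr(A)$ and $\Cobar(\As A)$, and your sign computation $\tau_V\circ(\Sigma^{-1}\otimes\Sigma^{-1})=-(\Sigma^{-1}\otimes\Sigma^{-1})\circ\tau_W$ is exactly the reason the desuspended dual of the commutative product lands in the weight-two part of the free Lie algebra --- i.e.\ it is what makes Definition~\ref{def:har} well posed in the first place, so the two quadratic differentials agree on generators tautologically. Where you differ from the paper's (implicit) route is the second isomorphism: rather than unravelling the other square directly (abelianizing $\hat T$ to get $\hat S$, etc.), you deduce $\CE\circ\Lie\cong\Ab\circ\Bar$ formally from uniqueness of right adjoints, having correctly noted that $\As\dashv\Ab$ after passing to opposite categories; this is a legitimate and cleaner decomposition. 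It is worth recording that the sign bookkeeping you flag as the hard part can be avoided entirely by pushing your adjunction observation one step further: since all four functors in the first square are left adjoints, by Yoneda it suffices to match the corepresented functors, and for $\g\in\DGAa$ one has $\Hom_{\DGAa}(U\Harr(A),\g)\cong\MC(\Lie(\g)\otimes A)$ while $\Hom_{\DGAa}(\Cobar(\As A),\g)\cong\MC(\I(\g)\otimes A)$; these sets literally coincide because for a degree-one element $x$ of the commutator Lie algebra one has $\tfrac12[x,x]=x^2$ (characteristic zero), so both MC equations read $d(x)+x^2=0$. That disposes of both isomorphisms at once, with no suspension signs anywhere. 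Finally, a small caution about your closing sanity check: ``the symmetrization of the dual product'' and ``the dual of the commutator bracket'' differ a priori by a factor of $2$ (the dual of $ab-(-1)^{|a||b|}ba$ desuspends to $\delta+\tau_V\delta$, whose image in $\hat S^2$ is twice that of $\delta$), so as stated it hides exactly the kind of convention-dependent normalization your main argument is careful about; since that check is not load-bearing --- the second isomorphism already follows from the first by adjunction --- this does not affect the correctness of your proof.
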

\begin{proof}
	Straightforward unravelling of the definitions.
\end{proof}	

\section{Main theorems}

\subsection{MC elements and the deformation functor based on a dg Lie algebra}
Any dg Lie algebra $\g$ determines a deformation functor $\Def_{\g}:A\mapsto\Def_\g(A)=\MCmod(\g, A)$ where $A$ is a local pseudocompact commutative dg algebra. Thus, $\Def_\g$ is a set-valued functor on $\pcCDGAloc$  This (extended) deformation functor has the following homotopy invariance property.

\begin{thm}\label{th:invariance}Let $\g$ be a dg Lie algebra.
\begin{enumerate}
\item If $A\to B$ is a weak equivalence in $\pcCDGAloc$ then the induced map $\Def_{\g}(A)\to\Def_{\g}(B)$ is an isomorphism. Therefore $\Def_{\g}$ descends to a set-valued functor on $\ho(\pcCDGAloc)$ that will be denoted by the same symbol.
\item If $\g$ and $\g^\prime$ are two quasi-isomorphic dg Lie algebras, then the functors $\Def_{\g}$ and $\Def_{\g^\prime}$ are isomorphic.
\end{enumerate}
\end{thm}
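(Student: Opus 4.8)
The plan is to derive both claims formally from the natural identification $\Def_\g(A)=\MCmod(\g,A)\cong[\Harr(A),\g]$ of Theorem~\ref{th:homotopy}, combined with the model-category principle (Theorem~\ref{th:homotopyclasses}) that the set $[X,Y]$ of homotopy classes, for $X$ cofibrant and $Y$ fibrant, is insensitive to weak equivalences in either variable. The two structural facts that make this work are that every object of $\DGLA$ is fibrant (Example~\ref{ex:closedmodel}(4)) and that $\Harr(A)$ is always cofibrant (as already used in the proof of Theorem~\ref{th:homotopy}); thus $[\Harr(A),\g]$ has the expected homotopy invariance for all $A$ and all $\g$.

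For part (1), I would first note that, under the isomorphism $\Def_\g(A)\cong[\Harr(A),\g]$ and the contravariance of $\Harr$, a map $f\colon A\to B$ induces the map $\Def_\g(f)$ which corresponds to precomposition with $\Harr(f)\colon\Harr(B)\to\Harr(A)$. If $f$ is a weak equivalence then $\Harr(f)$ is a quasi-isomorphism \emph{by definition} of the weak equivalences in $\pcCDGAloc$, hence a weak equivalence in $\DGLA$ between objects that are both cofibrant and fibrant, and therefore a homotopy equivalence by Theorem~\ref{th:homotopyclasses}(5). Precomposition with a homotopy equivalence is a bijection on homotopy classes (using that composition respects homotopy, Theorem~\ref{th:homotopyclasses}(4)), so $\Def_\g(f)$ is an isomorphism. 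Since $\Def_\g$ then carries all weak equivalences to bijections, the universal property of the localization functor $\gamma\colon\pcCDGAloc\to\ho(\pcCDGAloc)$ supplies the asserted descent to the homotopy category.

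For part (2), I would instead fix $A$ and vary the Lie algebra. A quasi-isomorphism $\phi\colon\g\to\g^\prime$ is a weak equivalence between fibrant objects of $\DGLA$, so postcomposition with $\phi$ induces a bijection $[\Harr(A),\g]\to[\Harr(A),\g^\prime]$ for every $A$ (Theorem~\ref{th:homotopyclasses}, $\Harr(A)$ being cofibrant). Naturality of the identification in Theorem~\ref{th:homotopy} in the algebra variable $A$ shows that these bijections assemble into a natural isomorphism of functors $\Def_\g\cong\Def_{\g^\prime}$; when $\g$ and $\g^\prime$ are only linked by a zigzag of quasi-isomorphisms I would apply this argument to each arrow and compose the resulting natural isomorphisms.

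In this argument there is essentially no obstacle beyond bookkeeping: the directions of the maps (the contravariance of $\Harr$, so that $A\to B$ yields $\Harr(B)\to\Harr(A)$) and the identification of $\Def_\g(f)$ with $\Harr(f)^{*}$ must be tracked carefully, but both follow from the naturality asserted in Theorem~\ref{th:homotopy}. The one place where the specific theory is genuinely used is the cofibrancy of $\Harr(A)$, which is exactly what makes $[\Harr(A),-]$ a homotopy functor on $\DGLA$; everything else is a formal consequence of Theorems~\ref{th:homotopyclasses}, \ref{th:hinich} and~\ref{th:homotopy}.
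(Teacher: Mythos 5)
Your proof is correct and takes essentially the same route as the paper, whose entire proof of this theorem is the single line ``This follows from Theorem~\ref{th:homotopy}''; you have merely made explicit the model-categorical bookkeeping (cofibrancy of $\Harr(A)$, fibrancy of all objects of $\DGLA$, and the invariance of homotopy classes $[-,-]$ under weak equivalences via Theorem~\ref{th:homotopyclasses}) that the paper leaves implicit.
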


\begin{proof}
This follows from Theorem \ref{th:homotopy}.
\end{proof}

\begin{thm}\label{th:representable}
The set-valued functor $\Def_{\g}$ on $\ho(\pcCDGAloc)$ is representable by the local pseudocompact commutative dg algebra $\CE(\g)$. Conversely, any functor on $\ho(\pcCDGAloc)$ that is homotopy representable by a local pseudocompact commutative dg algebra $A$ is isomorphic to the functor $\Def_{\Harr(A)}$.
\end{thm}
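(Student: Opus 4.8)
The plan is to deduce both halves of the statement from two results already in hand: the identification of MC moduli sets with homotopy classes of maps (Theorem~\ref{th:homotopy}) and the Quillen equivalence $(\Harr,\CE)$ of Theorem~\ref{th:hinich}. The first assertion will be almost immediate, and the converse will reduce to showing that $A$ and $\CE(\Harr(A))$ agree in $\ho(\pcCDGAloc)$, which is precisely what a Quillen equivalence guarantees.

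For the first assertion I would argue that it is essentially a repackaging of Theorem~\ref{th:homotopy}. By definition $\Def_\g(A)=\MCmod(\g,A)$, and Theorem~\ref{th:homotopy} provides a natural isomorphism $\MCmod(\g,A)\cong[\CE(\g),A]$. Since every object of $\pcCDGAloc$ is fibrant by Remark~\ref{rem:rightpropercdga}, the set $[\CE(\g),A]$ computed after a cofibrant replacement of $\CE(\g)$ is exactly $\Hom_{\ho(\pcCDGAloc)}(\CE(\g),A)$. Thus the natural isomorphism $\Def_\g(A)\cong\Hom_{\ho(\pcCDGAloc)}(\CE(\g),A)$ exhibits $\CE(\g)$ as a representing object; that $\Def_\g$ genuinely descends to $\ho(\pcCDGAloc)$ is Theorem~\ref{th:invariance}(1), so no extra work is needed beyond citing naturality in $A$ from Theorem~\ref{th:homotopy}.

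For the converse, suppose $F$ is representable on $\ho(\pcCDGAloc)$ by a local pseudocompact commutative dg algebra $A$, so that $F(B)\cong[A,B]$ naturally in $B$. Putting $\g:=\Harr(A)$, the first assertion applied to $\g$ gives $\Def_{\Harr(A)}(B)\cong[\CE(\Harr(A)),B]$, so it suffices to construct a natural isomorphism $[A,B]\cong[\CE(\Harr(A)),B]$, and for this it is enough to show that $A$ and $\CE(\Harr(A))$ become isomorphic in $\ho(\pcCDGAloc)$. This is exactly the content of the derived unit of the adjunction $(\Harr,\CE)$ being an isomorphism at $A$, which holds because $(\Harr,\CE)$ is a Quillen equivalence by Theorem~\ref{th:hinich}. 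Composing $F(B)\cong[A,B]\cong[\CE(\Harr(A)),B]\cong\Def_{\Harr(A)}(B)$ then yields $F\cong\Def_{\Harr(A)}$.

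The main obstacle I anticipate is the bookkeeping around the opposite category and the cofibrant/fibrant replacements. The derived unit is a weak equivalence only after replacing $A$ by a cofibrant model $A^c$ in $\pcCDGAlocop$, so strictly one first obtains $A\cong\CE(\Harr(A^c))$ in $\ho(\pcCDGAloc)$ rather than $\CE(\Harr(A))$ on the nose. To close the gap I would use that the comparison map $A^c\to A$ is a weak equivalence in $\pcCDGAloc$ and hence, by the very definition of weak equivalence in $\pcCDGAloc$, is sent by $\Harr$ to a quasi-isomorphism $\Harr(A)\to\Harr(A^c)$; Theorem~\ref{th:invariance}(2) then identifies $\Def_{\Harr(A^c)}$ with $\Def_{\Harr(A)}$. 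Everything else is formal, since all objects of $\pcCDGAloc$ are fibrant and $\Harr(A)$ is cofibrant in $\DGLA$, so these replacements and the variance of the adjunction are routine to track.
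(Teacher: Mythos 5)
Your proposal is correct and follows essentially the same route as the paper: the first half is Theorem~\ref{th:homotopy} read as a representability statement, and the converse is the chain $F(B)\cong[A,B]\cong[\CE(\Harr(A)),B]\cong\MCmod(\Harr(A),B)\cong\Def_{\Harr(A)}(B)$, where the middle isomorphism comes from the Quillen equivalence of Theorem~\ref{th:hinich} (the paper leaves this step implicit, while you spell it out via the derived unit). Your precaution about cofibrant replacement, though harmless and correctly discharged via Theorem~\ref{th:invariance}(2), is actually unnecessary: every object of $\pcCDGAloc$ is fibrant (Remark~\ref{rem:rightpropercdga}), hence every object of $\pcCDGAlocop$ is cofibrant, so the derived unit is already the ordinary unit at $A$ itself.
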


\begin{proof}
By Theorem \ref{th:homotopy} we have
$\Def_{\g}(A)=\MCmod(\g,A)\cong [\CE(\g),A]$, which means that $\Def_{\g}$ is representable by $\CE(\g)$. Conversely, given a functor $F$ on $\pcCDGAloc$ representable by a local pseudocompact dg algebra $A$ we have for $B\in \pcCDGAloc$:
\begin{align*}
F(B) &= [B,A]\\
&\cong [\CE(\Harr(A)),B]\\
&\cong \MCmod(\Harr(A), B)\\
&\cong \Def_{\Harr(A)}(B)
\end{align*}
as required.
\end{proof}

\subsection{Finding a dg Lie algebra associated with a deformation functor}
We will now formulate the necessary and sufficient conditions on a homotopy invariant functor on $\pcCDGA$ ensuring that it is representable (and thus, `controlled' by a dg Lie algebra).

\begin{thm}\label{th:main:cdga}
Let $F$ be a set-valued functor on $\pcCDGAloc$ such that:
\begin{enumerate}
\item $F$ is homotopy invariant: it takes weak equivalences in $\pcCDGAloc$ to bijections of sets.
\item $F$ is normalized: $F(k)$ is a one-element set.
\item $F$ takes arbitrary products in $\pcCDGAloc$ into products of sets.
\item For any diagram in $\pcCDGAloc$ of the form $B\rightarrow A\leftarrow C$ where  $A\leftarrow C$  is surjective, the natural map
$F(B\times_AC)\to F(B)\times_{F(A)}F(C)$ is surjective.
\end{enumerate}
Then $F$ is homotopy representable, i.e.  there exists $X\in\pcCDGAloc$ such that for any $Y\in\pcCDGAloc$ there is a natural isomorphism $F(Y)\cong [X,Y]$.
\end{thm}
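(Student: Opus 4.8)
The plan is to deduce this from the abstract Brown representability theorem (Theorem~\ref{thm:brownrep}) applied to the model category $\C:=\pcCDGAlocop$, which is compactly generated by Proposition~\ref{prop:compact}. First I would verify that $\C$ is pointed: in $\pcCDGAloc$ the algebra $k$ is terminal via the augmentation and initial via the unit (the unique unital map $k\to A$), hence a zero object, and this property passes to the opposite category. A covariant set-valued functor $F$ on $\pcCDGAloc$ is exactly the same datum as a contravariant set-valued functor on $\C$, so Theorem~\ref{thm:brownrep} becomes available once its four hypotheses are checked for $F$ regarded as a contravariant functor on $\C$.

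The engine of the argument is the dictionary supplied by passing to the opposite category. Under it, the initial-terminal object $*$ of $\C$ is $k$, weak equivalences in $\C$ coincide with weak equivalences in $\pcCDGAloc$, coproducts in $\C$ are products in $\pcCDGAloc$, pushouts in $\C$ are pullbacks in $\pcCDGAloc$, and cofibrations in $\C$ are fibrations in $\pcCDGAloc$, i.e.~surjections. With this translation, Brown's condition (1), $F(*)=*$, is our normalization hypothesis (2); Brown's condition (2) is homotopy invariance (1); and Brown's condition (3), that $F$ carries coproducts of cofibrant objects to products of sets, follows from the stronger hypothesis (3) that $F$ carries \emph{all} products in $\pcCDGAloc$ to products of sets.

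For Brown's condition (4) I would reason as follows. Given cofibrant objects $A,B,C$ of $\C$ with $A\to B$ a cofibration and $A\to C$ an arbitrary map, the translated data is a diagram $B\to A\leftarrow C$ in $\pcCDGAloc$ with $B\to A$ surjective, and the pushout $B\coprod_A C$ in $\C$ becomes the pullback $B\times_A C$ in $\pcCDGAloc$. Since the fibre product is symmetric, $B\times_A C\cong C\times_A B$ and $F(B)\times_{F(A)}F(C)\cong F(C)\times_{F(A)}F(B)$, it is immaterial which leg carries the surjection; relabelling $B\leftrightarrow C$ shows that our hypothesis (4), stated with $C\to A$ surjective, yields precisely the required surjectivity of $F(B\coprod_A C)\to F(B)\times_{F(A)}F(C)$. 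As before, the cofibrancy restriction on $A,B,C$ is harmless, since our hypothesis holds for all objects of $\pcCDGAloc$. Hence all four hypotheses of Theorem~\ref{thm:brownrep} are satisfied.

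Finally, Theorem~\ref{thm:brownrep} furnishes an object $X\in\C$ together with a natural bijection $F(Y)\cong[Y,X]_{\C}$ for every $Y$. Because homotopy classes of maps $Y\to X$ in $\C=\pcCDGAlocop$ are the same as homotopy classes of maps $X\to Y$ in $\pcCDGAloc$, this reads $F(Y)\cong[X,Y]$, which is exactly the asserted homotopy representability. The only genuine work is the opposite-category bookkeeping, above all the matching of condition (4): one must correctly identify pushouts in $\C$ with pullbacks in $\pcCDGAloc$ and absorb the mismatch in the direction of the surjection through symmetry of the fibre product. The pointedness check and the passage from $[Y,X]_{\C}$ to $[X,Y]$ are the remaining points that require care.
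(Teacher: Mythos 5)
Your proof is correct and takes essentially the same approach as the paper: the paper's own proof is a one-line deduction from Theorem~\ref{thm:brownrep} applied to the compactly generated model category $\pcCDGAlocop$ (Proposition~\ref{prop:compact}), leaving the opposite-category bookkeeping implicit. The details you supply---pointedness of $\pcCDGAloc$, cofibrations in $\pcCDGAlocop$ being surjections, all objects being cofibrant there, and the relabelling $B\leftrightarrow C$ to match condition (4)---are precisely the checks the paper omits.
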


\begin{proof}
This follows from Brown representability, Theorem \ref{thm:brownrep}, taking into account that the model category $\pcCDGAlocop$ is compactly generated, cf.~Proposition \ref{prop:compact}.
\end{proof}	

\begin{rem}
One can consider deformation functors with values in simplicial sets, rather than sets. This is the approach taken in \cite{Lurie,Pridham}. There is a version of the representability theorem in this setting.
\end{rem}

\subsection{Associative deformation theory}
Any augmented dg algebra $\g$ over a field $k$ of arbitrary characteristic determines a deformation functor  $\Def_{\g}:A\mapsto\Def_\g(A)=\MCmod(\g, A)$ where $A$ is a local pseudocompact associative dg algebra. Thus, $\Def_\g$ is a set-valued functor on $\pcDGAloc$.  This (extended) deformation functor has the following homotopy invariance property.

\begin{thm}\label{th:associnvariance}
Let $\g$ be an augmented dg algebra.
\begin{enumerate}
\item If $A\to B$ is a weak equivalence in $\pcDGAloc$ then the induced map $\Def_{\g}(A)\to\Def_{\g}(B)$ is an isomorphism. Therefore $\Def_{\g}$ descends to a set-valued functor on $\ho(\pcDGAloc)$ that will be denoted by the same symbol.
\item If $\g$ and $\g^\prime$ are two quasi-isomorphic dg algebras, then the functors $\Def_{\g}$ and $\Def_{\g^\prime}$ are isomorphic.
\end{enumerate}
\end{thm}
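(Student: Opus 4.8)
The plan is to run the argument used for the Lie-algebra version in Theorem~\ref{th:invariance} essentially verbatim, substituting the associative identification of Theorem~\ref{th:assochomotopy} for the commutative one of Theorem~\ref{th:homotopy}. The only input I would rely on is the natural isomorphism of Theorem~\ref{th:assochomotopy},
\[
\Def_\g(A)=\MCmod(\I(\g),A)\cong [\Bar(\g),A]_{\pcDGAloc},
\]
which exhibits $\Def_\g$ as the functor on $\pcDGAloc$ corepresented, in the homotopy category, by the single object $\Bar(\g)$. Once $\Def_\g$ is written in this corepresentable form, both claims reduce to formal model-category manipulations, and, pleasingly, nothing in the argument forces $k$ to have characteristic zero.

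For part (1), I would observe that a weak equivalence $f\colon A\to B$ in $\pcDGAloc$ is carried to an isomorphism by the localization functor $\gamma\colon\pcDGAloc\to\ho(\pcDGAloc)$, by the universal property recorded just after Theorem~\ref{th:homotopyclasses}. Applying the corepresentable functor $[\Bar(\g),-]=\Hom_{\ho(\pcDGAloc)}(\Bar(\g),-)$ then sends $f$ to a bijection, and composing with the natural isomorphism above shows that $\Def_\g(A)\to\Def_\g(B)$ is a bijection. Since $\Def_\g$ thus inverts every weak equivalence, the universal property of $\gamma$ yields a unique factorization of $\Def_\g$ through $\ho(\pcDGAloc)$, which is exactly the asserted descent.

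For part (2) I would first upgrade a quasi-isomorphism $\g\to\g'$ of augmented dg algebras to a weak equivalence between $\Bar(\g)$ and $\Bar(\g')$ in $\pcDGAloc$. This is where the model structures genuinely enter: by Theorem~\ref{th:positselski}, $\Bar$ is the right adjoint in the Quillen equivalence $(\Cobar,\Bar)$, hence a right Quillen functor, and right Quillen functors carry weak equivalences between fibrant objects to weak equivalences. Because every object of $\DGAa$ is fibrant (Example~\ref{ex:closedmodel}(4)), $\Bar$ preserves \emph{all} quasi-isomorphisms, so $\Bar(\g)$ and $\Bar(\g')$ become isomorphic in $\ho(\pcDGAloc)$. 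Corepresenting by these now-isomorphic objects, and using functoriality of $\Hom_{\ho(\pcDGAloc)}(-,A)$ in the first variable, produces an isomorphism $[\Bar(\g),-]\cong[\Bar(\g'),-]$ natural in $A$, i.e.\ $\Def_\g\cong\Def_{\g'}$.

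The step I expect to demand the most care is the one in part (2): verifying that $\Bar$ really does send the relevant quasi-isomorphisms to weak equivalences. The subtlety is that, as the remark following Theorem~\ref{th:assochomotopy} stresses, weak equivalences in $\pcDGAloc$ are \emph{filtered} quasi-isomorphisms and are strictly finer than ordinary quasi-isomorphisms; one therefore cannot conclude preservation by inspecting cohomology directly, and it is precisely the right-Quillen property together with the fibrancy of all objects of $\DGAa$ that bridges this gap. A secondary point worth checking, to pass from pointwise bijections to genuine isomorphisms of functors in both (1) and (2), is the naturality in $A$ of the isomorphism of Theorem~\ref{th:assochomotopy}; with that in hand the argument is complete.
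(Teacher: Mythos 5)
Your proposal is correct, and for part (1) it is exactly the intended unwinding of the paper's one-line proof (``This follows from Theorem~\ref{th:assochomotopy}''): once $\Def_\g(-)\cong[\Bar(\g),-]_{\pcDGAloc}$ is identified with a hom-functor on $\ho(\pcDGAloc)$, weak equivalences are inverted because they become isomorphisms in the homotopy category.

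For part (2), however, you take a genuinely different route from the one Theorem~\ref{th:assochomotopy} is set up to provide. That theorem records \emph{both} identifications
\[
[\Cobar(A),\g]_{\DGAa}\cong\MCmod(\I(\g),A)\cong[\Bar(\g),A]_{\pcDGAloc},
\]
and the left-hand one makes part (2) immediate: a quasi-isomorphism $\g\to\g'$ is a weak equivalence in $\DGAa$, where every object is fibrant and $\Cobar(A)$ is cofibrant, so by Theorem~\ref{th:homotopyclasses}(3) it induces a bijection $[\Cobar(A),\g]\to[\Cobar(A),\g']$, naturally in $A$. You instead work entirely on the pseudocompact side and bridge the gap with Ken Brown's lemma (the fact, recorded in the paper right after the definition of Quillen adjunction, that a right Quillen functor preserves weak equivalences between fibrant objects), applied to $\Bar\colon\DGAa\to\pcDGAlocop$: since all objects of $\DGAa$ are fibrant, $\Bar$ carries every quasi-isomorphism to a weak equivalence, whence $\Bar(\g)\cong\Bar(\g')$ in $\ho(\pcDGAloc)$ and the corepresented functors agree. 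This is valid, and you correctly flag the genuine subtlety it rests on — weak equivalences in $\pcDGAloc$ are filtered quasi-isomorphisms, strictly finer than quasi-isomorphisms, so no cohomological shortcut is available. What your route buys is a statement of independent interest (that $\Bar$ inverts quasi-isomorphisms into weak equivalences of $\pcDGAloc$); what the paper's route buys is economy: each half of the invariance theorem is read off directly from one side of the displayed isomorphism, with no appeal to Ken Brown's lemma. One cosmetic point: ``quasi-isomorphic'' should be interpreted as connected by a zigzag of quasi-isomorphisms, so either argument should be applied to each arrow of the zigzag and the resulting isomorphisms of functors composed.
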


\begin{proof}
This follows from Theorem \ref{th:assochomotopy}.
\end{proof}

\begin{thm}\label{th:defass}
The set-valued functor $\Def_{\g}$ on $\ho(\pcDGAloc)$ is representable by the local pseudocompact dg algebra $\Bar(\g)$. Conversely, any functor on $\ho(\pcDGAloc)$ that is homotopy representable by a local pseudocompact dg algebra $A$ is isomorphic to the functor $\Def_{\Cobar(A)}$.
\end{thm}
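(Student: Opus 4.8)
The plan is to transcribe, \emph{mutatis mutandis}, the proof of the commutative statement Theorem~\ref{th:representable}, replacing the Koszul-dual pair $(\Harr,\CE)$ by its associative counterpart $(\Cobar,\Bar)$ and invoking Theorem~\ref{th:assochomotopy} in place of Theorem~\ref{th:homotopy}. The whole argument rests on the two natural isomorphisms of Theorem~\ref{th:assochomotopy}, namely $\MCmod(\I(\g),A)\cong[\Bar(\g),A]_{\pcDGAloc}$ and $\MCmod(\I(\g),A)\cong[\Cobar(A),\g]_{\DGAa}$, together with the fact that $(\Cobar,\Bar)$ is a Quillen equivalence (Theorem~\ref{th:positselski}). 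Since none of these inputs uses a characteristic-zero hypothesis, the resulting statement is valid in arbitrary characteristic.

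For the direct statement I would unwind the definition $\Def_{\g}(A)=\MCmod(\I(\g),A)$ and apply Theorem~\ref{th:assochomotopy}, read with $A$ as the variable, to obtain $\Def_{\g}(A)\cong[\Bar(\g),A]_{\pcDGAloc}$ naturally in $A$. By Theorem~\ref{th:associnvariance} this functor descends to $\ho(\pcDGAloc)$, and there it is exactly $[\Bar(\g),-]$, so $\Def_{\g}$ is representable by $\Bar(\g)$. For the converse, suppose $F$ on $\ho(\pcDGAloc)$ is homotopy representable by $A$, i.e.\ $F(B)\cong[A,B]_{\pcDGAloc}$ naturally in $B$. The crucial input is that, because $(\Cobar,\Bar)$ is a Quillen equivalence, the derived unit of the adjunction is a natural isomorphism, so $A\cong\Bar(\Cobar(A))$ in $\ho(\pcDGAloc)$; here it is convenient that every object of $\pcDGAloc$ and of $\DGAa$ is fibrant, so that the relevant derived functors reduce to a single cofibrant replacement on the $\pcDGAloc$ side. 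Granting this I would compute
\[
F(B)\cong[A,B]_{\pcDGAloc}\cong[\Bar(\Cobar(A)),B]_{\pcDGAloc}\cong\MCmod(\I(\Cobar(A)),B)=\Def_{\Cobar(A)}(B),
\]
where the third isomorphism is Theorem~\ref{th:assochomotopy} again; naturality of each step yields $F\cong\Def_{\Cobar(A)}$.

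The only genuinely nontrivial point, and the one I expect to be the main obstacle, is the identification $A\cong\Bar(\Cobar(A))$ in $\ho(\pcDGAloc)$ as a \emph{natural} isomorphism of representing objects. This is precisely the content of $(\Cobar,\Bar)$ being a Quillen \emph{equivalence} rather than a mere Quillen adjunction, and one must be careful that the derived unit is interpreted with respect to the correct op-conventions relating $\pcDGAloc$ and $\pcDGAlocop$ (the equivalence of Theorem~\ref{th:positselski} is stated between $\pcDGAlocop$ and $\DGAa$, so the direction of the unit must be matched to the representability convention used above). Everything else is a formal consequence of Theorem~\ref{th:assochomotopy} and requires no further calculation.
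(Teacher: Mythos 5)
Your proposal is correct and takes essentially the same route as the paper: the paper's proof just transcribes the argument of Theorem~\ref{th:representable}, replacing $(\Harr,\CE)$ by $(\Cobar,\Bar)$ and Theorem~\ref{th:homotopy} by Theorem~\ref{th:assochomotopy}, exactly as you do. The identification $A\cong\Bar(\Cobar(A))$ in $\ho(\pcDGAloc)$ that you single out is indeed the crucial (and in the paper only implicit) use of the Quillen equivalence of Theorem~\ref{th:positselski}, so making it explicit is a point in your favour rather than a deviation.
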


\begin{proof}
The proof is the same as that of Theorem~\ref{th:representable}, applying Theorem~\ref{th:assochomotopy} instead of Theorem~\ref{th:homotopy}.
\end{proof}

\subsection{Finding a dg algebra associated with a deformation functor} We will now formulate the necessary and sufficient conditions on a homotopy invariant functor on $\pcDGAloc$ ensuring that it is representable (and thus, `controlled' by an augmented (or, equivalently, non-unital) dg algebra).

\begin{thm}
Let $F$ be a set-valued functor on $\pcDGAloc$ such that:
\begin{enumerate}
\item $F$ is homotopy invariant: it takes weak equivalences in $\pcDGAloc$ to bijections of sets;
\item $F$ is normalized: $F(k)$ is a one-element set.
\item $F$ takes arbitrary products in $\pcDGAloc$ into products of sets.
\item For any diagram in $\pcDGAloc$ of the form $B\to A\leftarrow C$ where  $A\leftarrow C$  is surjective, the natural map
$F(B\times_AC)\to F(B)\times_{F(A)}F(C)$ is surjective.
\end{enumerate}
Then $F$ is homotopy representable, i.e.  there exists $X\in\pcDGAloc$ such that for any $Y\in\pcCDGAloc$ there is a natural isomorphism $F(Y)\cong [X,Y]$.
\end{thm}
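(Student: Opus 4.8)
The plan is to deduce this from the abstract Brown representability theorem (Theorem~\ref{thm:brownrep}) applied to the opposite model category $\C:=\pcDGAlocop$, exactly as in the commutative case of Theorem~\ref{th:main:cdga}. A covariant set-valued functor $F$ on $\pcDGAloc$ is the same datum as a contravariant set-valued functor on $\C$, so it suffices to check that $\C$ satisfies the standing hypotheses of Theorem~\ref{thm:brownrep} and that conditions (1)--(4) above translate into the four conditions required there.

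First I would record that $\C$ is a compactly generated pointed closed model category. Compact generation is Proposition~\ref{prop:assoccompact}. Pointedness holds because $k$ is simultaneously the initial and terminal object of $\pcDGAloc$ (via the structure map $k\to A$ and the augmentation $A\to k$), hence also of $\C$; I denote this initial-terminal object by $*$. A useful preliminary observation is that every object of $\pcDGAloc$ is fibrant (Remark~\ref{rem:rightproperdga}), so every object of $\C$ is cofibrant; consequently all the cofibrancy hypotheses appearing in Theorem~\ref{thm:brownrep} are automatically met by every object in sight.

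The heart of the argument is the dictionary between the two sets of conditions. Under $\C=\pcDGAlocop$ we have $*=k$, so Brown's condition $F(*)=*$ is precisely the normalization hypothesis~(2). Weak equivalences in $\C$ are those of $\pcDGAloc$ with direction reversed, so the homotopy invariance~(1) is Brown's condition~(2). Coproducts in $\C$ are products in $\pcDGAloc$, whence hypothesis~(3) yields Brown's condition~(3), the restriction to cofibrant objects being vacuous by the remark above. Finally, a pushout $B\coprod_A C$ along a cofibration in $\C$ is exactly a pullback $B\times_A C$ along a fibration, i.e.~a surjection, in $\pcDGAloc$; since a pullback is symmetric in its two legs, the surjectivity hypothesis~(4), stated with the leg $C\to A$ surjective, matches Brown's condition~(4) after relabelling, and again no cofibrancy need be imposed by hand.

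With the dictionary in place, Theorem~\ref{thm:brownrep} produces an object $X\in\C$ together with a natural isomorphism $F(Y)\cong[Y,X]$ in $\ho(\C)$ for all $Y$; reading this back in $\pcDGAloc$ gives $X\in\pcDGAloc$ with $F(Y)\cong[X,Y]$ naturally in $Y$, which is the desired homotopy representability. I do not expect a serious obstacle here: the entire content has been front-loaded into the compact generation statement (Proposition~\ref{prop:assoccompact}) and into the Brown representability machinery, and the only point demanding a little care is the bookkeeping of the directions of maps and the meaning of ``cofibrant'' under passage to the opposite category, both of which are trivialised by the observation that every object of $\pcDGAloc$ is fibrant.
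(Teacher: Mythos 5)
Your proposal is correct and follows exactly the paper's own route: the paper's proof is a one-line appeal to Brown representability (Theorem~\ref{thm:brownrep}) together with compact generation of $\pcDGAlocop$, and your argument simply spells out the dictionary (pointedness via $k$ being initial-terminal, fibrancy of all objects giving cofibrancy in the opposite category, products/pullbacks becoming coproducts/pushouts) that the paper leaves implicit. Your citation of Proposition~\ref{prop:assoccompact} is in fact the right one; the paper's proof cites the commutative Proposition~\ref{prop:compact}, evidently a slip.
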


\begin{proof}
This follows from Brown representability, Theorem \ref{thm:brownrep}, taking into account that the model category $\pcDGAlocop$ is compactly generated, cf.~Proposition \ref{prop:compact}.
\end{proof}
\subsection{Comparing commutative and associative deformations}
Assume now that $k$ has characteristic zero. Any set-valued functor $F$ on $\pcDGAloc$ determines by restriction a functor on $\pcCDGAloc$ and so it makes sense to ask whether an associative deformation functor $\Def_\g$ for $\g\in\DGAa$ restricts to a deformation functor on $\pcCDGAloc$. The following results answer this question.
\begin{thm}
Let $\g$ be a dg algebra. Then	the deformation functor $\Def_\g$ on $\pcDGAloc$ restricts to
 the deformation functor $\Def_{\Lie(\g)}$ on $\pcCDGAloc$.
\end{thm}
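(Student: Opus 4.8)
The plan is to unwind both sides of the asserted restriction and reduce the claim to the compatibility relation $\CE \circ \Lie \cong \Ab \circ \Bar$ established in Proposition \ref{prop:comass}. Concretely, for $\g \in \DGAa$, the functor $\Def_\g$ on $\pcDGAloc$ sends $A$ to $\MCmod(\I(\g), A)$, and by Theorem \ref{th:defass} this is homotopy representable by $\Bar(\g) \in \pcDGAloc$. Restricting along the forgetful functor $\As : \pcCDGAloc \to \pcDGAloc$ of Proposition \ref{prop:comass} means evaluating $\Def_\g$ on commutative $A$. On the other side, $\Def_{\Lie(\g)}$ on $\pcCDGAloc$ is, by Theorem \ref{th:representable}, homotopy representable by $\CE(\Lie(\g))$. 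So the whole statement amounts to showing that these two homotopy-representable functors agree after restriction.

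First I would reduce to a statement about representing objects. For $A \in \pcCDGAloc$ (viewed inside $\pcDGAloc$ via $\As$), I want
\[
\Def_\g(\As A) = \MCmod(\I(\g), \As A) \cong \MCmod(\Lie(\g), A) = \Def_{\Lie(\g)}(A),
\]
naturally in $A$. The cleanest route is to compare MC elements directly: an MC element of $\I(\g) \otimes \As A$ in the associative sense (satisfying $d(x) + x^2 = 0$) is, because $A$ is commutative and we are in characteristic zero, the same datum as an MC element of $\Lie(\g) \otimes A$ in the Lie sense (satisfying $d(x) + \tfrac12[x,x] = 0$), since for commutative $A$ the associative square $x^2$ and the Lie bracket $\tfrac12[x,x]$ of the commutator algebra coincide on $\g \otimes A$. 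I would then check that the two gauge group actions match: the gauge group of $\I(\g) \otimes \As A$ consists of units $1 + i$, and its action $g \cdot x = gxg^{-1} - d(g)g^{-1}$ is formally identical to the Lie gauge action once one passes to group-like elements, so the quotient sets $\MCmod$ agree. This gives the bijection on the nose at the level of sets, naturally in $A$.

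Alternatively, and perhaps more transparently, I would derive the identification from the representing objects using the adjunction isomorphisms: by Theorem \ref{th:assochomotopy} and Theorem \ref{th:homotopy},
\[
\Def_\g(\As A) \cong [\Bar(\g), \As A]_{\pcDGAloc} \cong [\Ab \Bar(\g), A]_{\pcCDGAloc} \cong [\CE(\Lie(\g)), A]_{\pcCDGAloc} \cong \Def_{\Lie(\g)}(A),
\]
where the second bijection is the Quillen adjunction $(\Ab, \As)$ passed to homotopy categories, and the third is the functor isomorphism $\Ab \circ \Bar \cong \CE \circ \Lie$ of Proposition \ref{prop:comass}. This packages everything into known results and avoids re-deriving the gauge-equivalence comparison by hand.

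The main obstacle I anticipate is justifying the passage of the adjunction $(\Ab, \As)$ to the homotopy categories in the second bijection above, i.e.\ that $[\Ab(-), -] \cong [-, \As(-)]$ holds derived rather than merely on the nose. Since $(\Ab, \As)$ is a Quillen adjunction and $\As$ is the \emph{restriction} along which we evaluate, one must check that $\As$ is taking homotopy classes to homotopy classes compatibly — this is where the characteristic-zero hypothesis and the fact that $\Bar(\g)$ is (homotopy) cofibrant in $\pcDGAlocop$ enter. If that derived compatibility is delicate, I would fall back on the direct MC-element comparison of the previous paragraph, which sidesteps the homotopy-theoretic bookkeeping entirely and only uses that $x^2 = \tfrac12[x,x]$ in a commutative coefficient algebra together with the matching of gauge actions.
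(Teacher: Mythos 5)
Your second chain of bijections is, almost verbatim, the paper's own proof: the paper writes $\Def_\g(\h)\cong[\Bar(\g),\h]$ using Theorem \ref{th:defass}, applies the derived $(\Ab,\As)$ adjunction to get $[\Ab(\Bar(\g)),\h]_{\pcCDGAloc}$, replaces $\Ab\circ\Bar$ by $\CE\circ\Lie$ via Proposition \ref{prop:comass}, and concludes with Theorem \ref{th:homotopy}. So your ``alternative'' route is essentially the paper's argument, and it is correct. The obstacle you flag at the end is not actually delicate: $(\Ab,\As)$ is a Quillen adjunction (as the paper records before Proposition \ref{prop:comass}), every object of $\pcDGAloc$ and of $\pcCDGAloc$ is fibrant because fibrations are surjections, and $\Bar(\g)$ is cofibrant in $\pcDGAloc$ since $\Bar$ is a right Quillen functor into $\pcDGAlocop$ (Theorem \ref{th:positselski}) and every object of $\DGAa$ is fibrant --- note the object needs to be cofibrant in $\pcDGAloc$, equivalently fibrant in $\pcDGAlocop$, not ``cofibrant in $\pcDGAlocop$'' as you wrote. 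Consequently $\Ab(\Bar(\g))$ already computes the left derived functor and the bijection $[\Bar(\g),\As(\h)]\cong[\Ab(\Bar(\g)),\h]$ on homotopy classes needs no further input. Your first route, the direct comparison of MC data, is a genuine alternative not taken by the paper, but it has a gap where you declare the gauge actions ``formally identical'': the associative gauge group consists of units $1+i$ in $(\I(\g)\otimes A)_e$, whereas the Lie gauge group consists of group-like elements of $\hat{U}\bigl(\Lie(\g)\otimes A\bigr)$, and identifying the two orbit relations requires the exponential/logarithm correspondence for pro-nilpotent algebras in characteristic zero: $\exp$ identifies degree-zero elements with group-likes, and under the canonical map $\hat{U}\bigl(\Lie(\g)\otimes A\bigr)\to(\I(\g)\otimes A)_e$ these are carried exactly onto the units $1+i$, compatibly with the action formula $g\cdot x=gxg^{-1}-d(g)g^{-1}$. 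With that point filled in, the direct route is valid and is more elementary (it avoids the Quillen machinery entirely), while the paper's route is shorter because it reuses the representability theorems already established.
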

\begin{proof}
	We know by Theorem \ref{th:defass} that $\Def_\g$ is represented by a dg algebra $\Bar(\g)$. Then for $\h\in\pcCDGAloc$ we have
	$\Def_\g(\h)=[\Bar(\g),\h]_{\DGAa}$ and so by Proposition \ref{prop:comass} and Theorem \ref{th:homotopy} we have:
	\begin{align*}
\Def_\g(\h)&\cong [\Ab(\Bar(\g)),\h]_{\pcCDGAloc}\\
	&\cong [\CE(\Lie(\g)),\h]_{\pcCDGAloc}\\&\cong \MCmod(\Lie(\g),\h)\\
	&\cong \Def_{\Lie(\g)}(\h)
	\end{align*}
	as claimed.
\end{proof}
\begin{rem}
As we saw, every deformation functor in characteristic zero is controlled by a dg Lie algebra. On the other hand, not every deformation functor is defined on the category $\pcDGAloc$ (which would imply that it is controlled by an \emph{associative} dg algebra), in the same way as not every Lie algebra comes from an associative algebra. An interesting example of an associative deformation theory is that of deformations of modules over an associative algebra.	Let $\g$ be an algebra and $M$ be a $\g$-module. Deformations of $M$ are controlled by the dg algebra $\REnd(M)$  the derived endomorphism
	algebra of $M$ viewed as a non-unital algebra; (it can be obtained as the ordinary endomorphism algebra of a $\g$-projective resolution of $M$). Considered as a functor on $\pcCDGAloc$, this deformation theory is controlled by $\Lie(\REnd(M))$, the commutator Lie algebra of $\REnd(M)$. More generally, deformations of $A_\infty$-modules over an $A_\infty$ algebra are controlled by a certain non-unital dg algebra, cf. \cite{GLST} regarding this example.
\end{rem}	

\noindent{\bf Acknowledgements.} The second author thanks J. Pridham for useful discussions. This research was partially supported by NSFC (11922110).

\setlength{\parindent}{0pt}
\end{document}